\theoremstyle{definition}
\newtheorem{definition}{Definition}[section]
\newtheorem{question}[definition]{Question}
\theoremstyle{plain}
\newtheorem{lemma}[definition]{Lemma}
\newtheorem{theorem}[definition]{Theorem}
\newtheorem{proposition}[definition]{Proposition}
\newtheorem{corollary}[definition]{Corollary}
\newtheorem{conjecture}[definition]{Conjecture}
\newtheorem{ansatz}[definition]{Ansatz}
\theoremstyle{remark}
\newtheorem{remark}[definition]{Remark}
\newtheorem{example}[definition]{Example}
\newcommand{\AAA}{\mathbb{A}}
\newcommand{\CC}{\mathbb{C}}
\newcommand{\FF}{\mathbb{F}}
\newcommand{\PP}{\mathbb{P}}
\newcommand{\QQ}{\mathbb{Q}}
\newcommand{\RR}{\mathbb{R}}
\newcommand{\ZZ}{\mathbb{Z}}
\newcommand{\WP}{\mathbb{W}\mathbb{P}}
\newcommand{\calL}{\mathcal{L}}
\newcommand{\calO}{\mathcal{O}}
\newcommand{\calP}{\mathcal{P}}
\newcommand{\calV}{\mathcal{V}}
\DeclareMathOperator{\Pic}{Pic}
\DeclareMathOperator{\rank}{rank}
\DeclareMathOperator{\im}{im}
\DeclareMathOperator{\Gr}{Gr}
\numberwithin{table}{section}
\begin{document}
\title[Mirror symmetry, degenerations and fibrations]{Mirror symmetry, Tyurin degenerations and fibrations on Calabi-Yau manifolds}

\author[C. F. Doran]{Charles F. Doran}
\address{Department of Mathematical and Statistical Sciences, 632 CAB, University of Alberta, Edmonton, AB, T6G 2G1, Canada}
\email{charles.doran@ualberta.ca}
\thanks{C. F. Doran would like to acknowledge support from the Natural Sciences and Engineering Research Council
of Canada (NSERC) and the Visiting Campobassi Professorship of the University of Maryland.}

\author[A. Harder]{Andrew Harder}
\address{Department of Mathematical and Statistical Sciences, 632 CAB, University of Alberta, Edmonton, AB, T6G 2G1, Canada}
\email{aharder@ualberta.ca}
\thanks{A. Harder was supported by an NSERC Post-Graduate Scholarship.}

\author[A. Thompson]{Alan Thompson}
\address{Department of Pure Mathematics, University of Waterloo, 200 University Ave West, Waterloo, ON, N2L 3G1, Canada}
\email{am6thomp@uwaterloo.ca}
\thanks{A. Thompson was supported by a Fields-Ontario Postdoctoral Fellowship with funding provided by NSERC and the Ontario Ministry of Training, Colleges and Universities.}
\date{}

\begin{abstract} We investigate a potential relationship between mirror symmetry for Calabi-Yau manifolds and the mirror duality between quasi-Fano varieties and Landau-Ginzburg models. More precisely, we show that if a Calabi-Yau admits a so-called Tyurin degeneration to a union of two Fano varieties, then one should be able to construct a mirror to that Calabi-Yau by gluing together the Landau-Ginzburg models of those two Fano varieties. We provide evidence for this correspondence in a number of different settings, including Batyrev-Borisov mirror symmetry for K3 surfaces and Calabi-Yau threefolds, Dolgachev-Nikulin mirror symmetry for K3 surfaces, and an explicit family of threefolds that are not realized as complete intersections in toric varieties.
\end{abstract}
\maketitle

\section{Introduction}

The aim of this paper is to investigate the relationship between mirror symmetry for Calabi-Yau manifolds and for Fano varieties of the same dimension. 

Classically, mirror symmetry is a network of conjectures relating the properties of two mirror dual Calabi-Yau manifolds. For us, unless otherwise stated, a \emph{Calabi-Yau manifold} will always be a smooth compact K\"{a}hler manifold $V$ with trivial canonical bundle $\omega_V \cong \calO_V$ and vanishing cohomology groups $H^i(V,\calO_V)$ for all $0 < i < \dim(V)$. 

A similar duality has been proposed for Fano varieties. In physics, Eguchi, Hori and Xiong \cite{gqc} postulated that a $d$-dimensional manifold $X$ with $c_1(X) > 0$ should be mirror to a \emph{Landau-Ginzburg model} $(Y,\mathsf{w})$, where $Y$ is a $d$-dimensional K\"ahler manifold and $\mathsf{w}$ is a function $\mathsf{w} \colon Y \to \CC$. This correspondence was then incorporated into the framework of homological mirror symmetry, as a correspondence between the directed Fukaya category \cite{vcm}  (resp. the bounded derived category of singularities \cite{dccstcs}) associated to $(Y,\mathsf{w})$ and the bounded derived category of coherent sheaves on $X$ (resp. the Fukaya category of $X$). More recently, Katzarkov, Kontsevich and Pantev \cite{btttlgm} conjectured that if $X$ is a Fano variety, then the Landau-Ginzburg model $(Y,\mathsf{w})$ of $X$ is in fact a quasi-projective variety that satisfies certain specific conditions and, moreover, that there is a mirror relationship between the Hodge numbers of $X$ and certain Hodge-theoretic invariants of $(Y,\mathsf{w})$.

It is expected that this notion of mirror symmetry for Fano varieties is related to classical mirror symmetry for Calabi-Yau manifolds of one dimension lower, as follows. If $X$ is a $d$-dimensional Fano variety with mirror Landau-Ginzburg model $(Y,\mathsf{w})$, then a general fibre of $\mathsf{w}$ is expected to be a $(d-1)$-dimensional Calabi-Yau variety that is mirror, in the classical sense, to a generic anticanonical hypersurface in $X$. 

This raises a natural question: is mirror symmetry for $d$-dimensional Fano varieties related to classical mirror symmetry for Calabi-Yau manifolds of the same dimension $d$? In this paper we outline a correspondence that provides a potential answer to this question.

This correspondence may be described as follows. Let $V$ be a $d$-dimensional Calabi-Yau manifold and suppose that $V$ admits a degeneration to a union $X_1 \cup_Z X_2$ of two quasi-Fano varieties glued along an anticanonical hypersurface $Z$ (such degenerations are called \emph{Tyurin degenerations}). Then we claim that the mirror $W$ of $V$ admits a fibration by $(d-1)$-dimensional Calabi-Yau manifolds, with general fibre $S$ that is mirror to $Z$. Moreover, $W$ can be constructed topologically by gluing together the Landau-Ginzburg models $(Y_1,\mathsf{w}_1)$ and $(Y_2,\mathsf{w}_2)$ of $X_1$ and $X_2$, in a sense to be made precise in Section \ref{sect:glue}.

The first person to observe traces of such a correspondence was probably Dolgachev \cite{mslpk3s}, who noticed that Dolgachev-Nikulin mirror symmetry for K3 surfaces matches Type II degenerations (of which Tyurin degenerations are a special case) with elliptic fibrations on the mirror. After this, the first mention of a higher dimensional version appears to be due to Tyurin, who gave a brief hint of its existence at the very end of \cite{fvcy}. 

More recently, a variant of the construction presented here was worked out in detail by Auroux \cite{slfmscydc}, in the special case where $V$ is a double cover of a Fano variety $X$ ramified over a smooth member of $|-2K_X|$; this $V$ admits a Tyurin degeneration to the union of $X$ with itself.

\medskip

The structure of this paper is as follows. In Section \ref{sect:setup} we describe our construction. We begin with a $d$-dimensional Calabi-Yau manifold $V$ which admits a Tyurin degeneration to a union $X_1 \cup_Z X_2$ of quasi-Fano varieties glued along an anticanonical hypersurface $Z$. Then we show that the Landau-Ginzburg models $(Y_1,\mathsf{w}_1)$ and $(Y_2,\mathsf{w}_2)$ of $X_1$ and $X_2$ may be glued together to form a new variety $W$, which is fibred by Calabi-Yau $(d-1)$-folds topologically mirror to $Z$, so that the Euler numbers of $V$ and $W$ satisfy the mirror relationship $\chi(V) = (-1)^d\chi(W)$. This suggests that $V$ and $W$ should be thought of as mirror dual. In the threefold case we provide even more evidence for this conjecture: if we make the assumptions that $W$ is Calabi-Yau and that the K3 surface $Z$ is Dolgachev-Nikulin mirror to a general fibre of the fibration on $W$, then we can show that $V$ and $W$ are in fact topologically mirror.

In the remaining sections of the paper we discuss this correspondence in several special cases. In Section \ref{sec:BBduality} we discuss the case of Batyrev-Borisov mirror symmetry for surfaces and threefolds. Indeed, suppose that $V$ is a K3 surface or Calabi-Yau threefold constructed as an anticanonical hypersurface in a Gorenstein toric Fano $3$- or $4$-fold, determined by a reflexive polytope $\Delta$. We show that a nef partition $\Delta_1$, $\Delta_2$ of $\Delta$ determines both a Tyurin degeneration $X_1 \cup_Z X_2$ of $V$ and a fibration $\pi\colon W \to \PP^1$ on a birational model $W$ of its Batyrev mirror, so that the general fibre of $\pi$ is Batyrev-Borisov mirror dual to the intersection $Z = X_1 \cap X_2$. 

Specializing to the threefold case, we further show that the singular fibres of the K3 surface fibration $\pi\colon W \to \PP^1$ contain numerical information about $X_1$ and $X_2$, and describe a relationship between $W$ and the Landau-Ginzburg models of $X_1$ and $X_2$. Unfortunately a corresponding result in the K3 surface case is difficult to prove for combinatorial reasons, but we conjecture the form that such a result should take.

Section \ref{sec:DNforK3s} is concerned with Dolgachev-Nikulin mirror symmetry for K3 surfaces. We revisit Dolgachev's \cite{mslpk3s} mirror correspondence between Type II degenerations (of which Tyurin degenerations are a special case) and elliptic fibrations, which may be thought of as a generalization of the correspondence described in Section \ref{sect:setup}. Consideration of several explicit examples suggests a way to enhance our conjectures to cope with more general Type II degenerations, which may contain more than two components.

In Section \ref{sec:beyond3folds} we discuss how this theory fits with classical mirror symmetry for threefolds. We begin by showing that, if $V$ is a Calabi-Yau threefold that undergoes a Tyurin degeneration (satisfying certain technical conditions), then mirror symmetry predicts the existence of a K3 fibration on the mirror threefold $W$, with properties consistent with those expected from the theory in Section \ref{sect:setup}. Following this, we specialize our discussion to the case of threefolds fibred by quartic mirror K3 surfaces, as studied in \cite{cytfmqk3s}. In this setting we explicitly construct candidate mirror threefolds, along with Tyurin degenerations of them, and show that they have the properties predicted by Section \ref{sect:setup}. In particular, this provides an important illustration of our theory using threefolds that are not complete intersections in toric varieties, thereby giving evidence that the ideas of Section \ref{sect:setup} apply beyond the toric setting of Section \ref{sec:BBduality}.

Finally, Section \ref{sec:noncomm} discusses the limitations of our construction. Indeed, it appears that difficulties arise for Tyurin degenerations of $V$ which occur along loci in moduli that are disjoint from points of maximally unipotent monodromy. In this case, we seem to have no guarantee of the existence of a mirror fibration on $W$; an example where this occurs is given in Example \ref{ex:cubic}. Instead we present evidence that, if $W$ is replaced by its bounded derived category of coherent sheaves $\mathbf{D}^b(W)$, it should be possible to find a non-commutative fibration of $\mathbf{D}^b(W)$ by Calabi-Yau categories, which might be thought of as homologically mirror to the Tyurin degeneration of $V$.

\section{Setup and preliminary results} \label{sect:setup}

Our aim is to provide evidence for a mirror correspondence between a certain type of degeneration of Calabi-Yau manifolds, called a Tyurin degeneration, and Calabi-Yau manifolds constructed by gluing Landau-Ginzburg models. We begin by defining these objects.

\subsection{Smoothing Tyurin degenerations} \label{sec:smoothtyur}

A smooth variety $X$ is called a \emph{quasi-Fano} variety if its anticanonical linear system contains a smooth Calabi-Yau member and $H^i(X,\calO_X) = 0$ for all $i >0$. Given this, a \emph{Tyurin degeneration} is a degeneration $\calV \to \Delta$ of Calabi-Yau manifolds over the unit disc $\Delta \subset \CC$, such that the total space $\calV$ is smooth and the central fibre is a union of two quasi-Fano varieties that meet normally along a smooth variety $Z$, with $Z \in |-K_{X_i}|$ for each $i \in \{1,2\}$. Degenerations of this type have been studied by Lee \cite{leethesis}, who coined the name.

This construction can be reversed, and a family of Calabi-Yau manifolds built up from a pair of quasi-Fano varieties $X_1$ and $X_2$ as follows. Let $Z$ be a smooth variety which is a member of both $|-K_{X_1}|$ and $|-K_{X_2}|$, and suppose that there are ample classes $D_1 \in \Pic(X_1)$ and $D_2 \in \Pic(X_2)$ which both restrict to the same ample class $D \in \Pic(Z)$ (this last condition is needed to ensure that \cite[Theorem 4.2]{ldncvsdcyv}, which gives the existence of a smoothing, can be applied in our setting).  Let $X_1 \cup_Z X_2$ denote the variety which is a normal crossings union of $X_1$ and $X_2$ meeting along $Z$. 

With this setup, we say that $X_1 \cup_Z X_2$ is \emph{smoothable} to a Calabi-Yau manifold $V$ if there exists a complex manifold $\mathcal{V}$ equipped with a map $\psi\colon \mathcal{V}\rightarrow \Delta$ so that the fibre $\psi^{-1}(0) = X_1 \cup_Z X_2$, the fibre $\psi^{-1}(t)$ is a smooth Calabi-Yau manifold for any $t\in \Delta \setminus \{0\}$, and $V$ is a general fibre of $\calV$. It follows from a theorem of Kawamata and Namikawa \cite[Theorem 4.2]{ldncvsdcyv} that $X_1 \cup_Z X_2$ is smoothable to a Calabi-Yau manifold $V$ if and only if $N_{Z/X_1}$ and $N_{Z/X_2}$ are inverses of one another and, moreover, that the resulting manifold $V$ is unique up to deformation.

\subsection{Gluing Landau-Ginzburg models}\label{sect:glue}

Let us first define what we mean by Landau-Ginzburg (LG) model in this paper. In \cite{harderthesis}, a notion of a LG model is defined which conjecturally encapsulates the LG models of Fano varieties, and even goes further to describe the LG models of many quasi-Fano varieties. For general quasi-Fano varieties, however, we do not believe that this definition is sufficient; in particular, it seems that for general quasi-Fanos we must drop any expectation that our LG model be algebraic.

For this reason, in this paper we adopt a much more general definition.

\begin{definition}
A \emph{Landau-Ginzburg (LG) model} of a quasi-Fano variety is a pair $(Y,\mathsf{w})$ consisting of a a K\"ahler manifold $Y$ satisfying $h^1(Y) = 0$ and a proper map $\mathsf{w}\colon Y \rightarrow \mathbb{C}$. The map $\mathsf{w}$ is called the \emph{superpotential}.
\end{definition}

Note that this definition leaves room for the image of $\mathsf{w}$ to be an open set in $\mathbb{C}$. If $Y$ is quasi-projective then the Hodge numbers of such LG models $(Y,\mathsf{w})$ are defined in \cite{btttlgm}; however, in the general case it is unclear how this should be done. Instead, following \cite{btttlgm}, we propose that if $(Y,\mathsf{w})$ is the LG model of a quasi-Fano variety $X$, then we should have
\begin{equation}h^{i}(Y,\mathsf{w}^{-1}(t)) = \sum_{j} h^{d-i+j,j}(X),\label{eqn:LG}\end{equation}
where $h^i(Y,\mathsf{w}^{-1}(t))$ is the rank of the cohomology group of the pair $H^i(Y,\mathsf{w}^{-1}(t))$ and $t$ is a generic point in the image of $\mathsf{w}$. We also expect that if $(Y,\mathsf{w})$ is the LG model of $X$, then the smooth fibres of $\mathsf{w}$ should be mirror to generic anticanonical hypersurfaces in $X$.

With notation as in the previous section, it now seems pertinent to ask whether there is any relationship between the LG models of the quasi-Fano varieties $X_1$ and $X_2$, and mirror symmetry for $V$. Indeed, it seems natural to expect that these LG models could be somehow glued together to give a mirror $W$ for $V$, since we are, in a topological sense, gluing $X_1$ and $X_2$ together to form $V$ (see \cite{fvcy} for details on this topological construction).

In more detail, we expect that if $Y_i$ is the LG model of $X_i$, equipped with superpotential $\mathsf{w}_i$, then the monodromy symplectomorphism on $\mathsf{w}_i^{-1}(t)$ (for $t$ a regular value of $\mathsf{w}_i$) associated to a small loop around $\infty$  can be identified under mirror symmetry with the restriction of  the Serre functor of the bounded derived category of coherent sheaves $\mathbf{D}^b(X_i)$ on $X_i$ to the bounded derived category of coherent sheaves $\mathbf{D}^b(Z)$ on $Z$ \cite{mavcm,btttlgm}. This Serre functor is simply $(-)\otimes \omega_{X_i}[d]$ where $[d]$ denotes shift by $d = \dim X_i$. Thus, up to a choice of shift, we see that the action of monodromy on $\mathsf{w}_i^{-1}(t)$ should be identified with the autoequivalence of $\mathbf{D}^b(Z)$ induced by taking the tensor product with $\omega_{X_i}|_Z = N_{Z/X_i}^{-1}$. 

Now recall that if $X_1 \cup_Z X_2$ is smoothable to $V$, then we have $N_{Z/X_1} \otimes N_{Z/X_2} = \mathcal{O}_Z$, so the monodromy symplectomorphism $\phi_1$ associated to a clockwise loop around infinity on $\mathsf{w}_1^{-1}(t)$ should be same as the monodromy $\phi_2^{-1}$ associated to a \emph{counter-clockwise} loop around infinity on $\mathsf{w}_2^{-1}(t)$. It should be noted that, for this to make sense, we must assume that the fibres of $\mathsf{w}_1$ and $\mathsf{w}_2$ are topologically the same Calabi-Yau manifold, which we denote by $S$; this assumption is stronger than the assumption that both are mirror to $Z$. 

Now we glue these LG models as follows. For each $i \in \{1,2\}$, choose $r_i$ so that $|\lambda| \leq r_i$ for every $\lambda$ in the critical locus of $\mathsf{w}_i$. Then choose local trivializations of $Y_i$ over $U_{i} = \{ z \in \mathbb{C} : |z| > r_i\}$ and let $Q_{i} = \mathsf{w}_i^{-1}(U_{i})$. This local trivialization is topologically equivalent to expressing $Q_i$ as a gluing of the ends of $B_i=S \times [-1,1]\times (-1,1)$ together via the map 
\[\widetilde{\phi}_i \colon p \times \{-1\} \times (z) \longmapsto  \phi_i(p) \times \{1\} \times (z),\]
where $\phi_i$ is the monodromy symplectomorphism, and we identify $S \times \{-1\} \times (-1,1)$ with $S \times \{1\} \times (-1,1)$. 

\begin{figure}
\begin{center}
\begin{tikzpicture}[scale = 0.75]

\draw (4,0) arc (0:180: 2cm);
\draw[dashed] (4,0) arc (0:180: 2cm and 0.6cm);
\draw (0,0) arc (180:360: 2cm and 0.6cm);

\draw (0,-2) arc (180:360: 2cm);
\draw (2,-2) ellipse (2cm and 0.6cm);


\draw [->] (2,3.2) -- (2,2.2);
\draw [->] (2,-5.2) -- (2,-4.2);


\node at (2,3.6) {{\small $Y_1$}};
\node at (2,-5.6) {$Y_2$};

\node at (2.5,2.8) {$\mathsf{w}_1$};
\node at (2.5,-4.8) {$\mathsf{w}_2$};


\draw (7,-2) ellipse( 2cm and 0.6cm);
\draw (5,-2.5) arc (180:360: 2cm and 0.6cm);
\draw (8.75,-2.25) arc (30:150: 2cm and 0.6cm);
\draw (7,0.5) ellipse( 2cm and 0.6cm);
\draw (5,0) arc (180:360: 2cm and 0.6cm);
\draw (8.75,0.25) arc (30:150: 2cm and 0.6cm);
\draw (9,0)--(9,0.5);
\draw (5,0)--(5,0.5);
\draw (9,-2)--(9,-2.5);
\draw (5,-2)--(5,-2.5);


\draw [->] (7,3.2) -- (7,1.5);
\draw [->] (7,-5.2) -- (7,-3.5);

\node at (7,3.6) {$Y_1|_{B_1}$};
\node at (7,-5.6) {$Y_2|_{B_2}$};

\node at (7.8,2.3) {$\mathsf{w}_1|_{B_1}$};
\node at (7.8,-4.3) {$\mathsf{w}_2|_{B_2}$};

\node at (4.5,3.6) {$\supset$};
\node at (4.5,-5.6) {$\supset$};

\node at (7,-1) {$\reflectbox{\rotatebox[origin=c]{90}{$\simeq$}}$ diffeo};

\node at (12,-4) {Identify $B_1$ and $B_2$};


\draw (12,-1) circle (2cm);
\draw[dashed] (14,-1.2) arc (0:180: 2cm and 0.6cm);
\draw[dashed] (14,-0.8) arc (0:180: 2cm and 0.6cm);
\draw (10,-1.2) arc (180:360: 2cm and 0.6cm);
\draw (10,-0.8) arc (180:360: 2cm and 0.6cm);


\draw[->] (12,3.2) -- (12,1.2);


\node at (12,3.6) {$W$};
\node at (12.3,2.3) {$\pi$};

\end{tikzpicture}
\end{center}
\caption{Gluing $Y_1$ and $Y_2$ to give $W$}
\label{fig:gluing}
\end{figure}
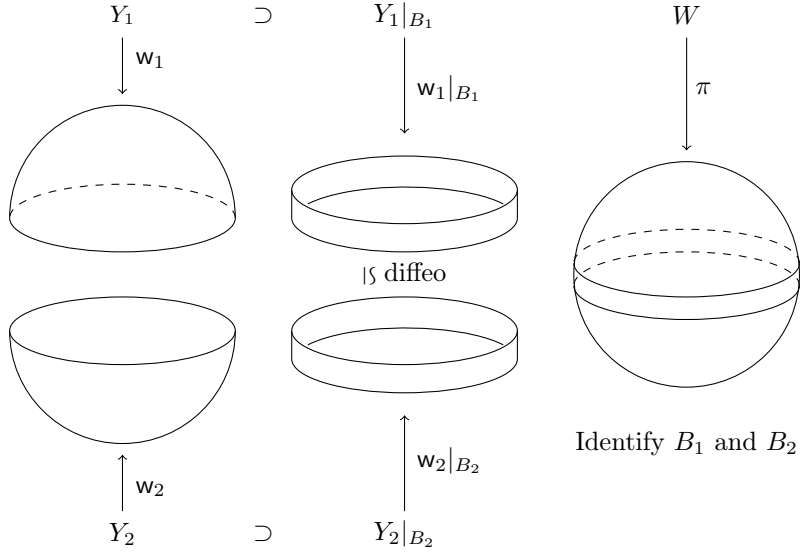

Assuming that $\phi_1 = \phi^{-1}_2$ (which, we recall, conjecturally follows from smoothability of $X_1 \cup_Z X_2$), we can identify $B_1$ with $B_2$ by the map
\[\tau\colon p \times [x] \times (z) \longmapsto p \times [-x] \times (-z).\]
Under this identification of $B_1$ and $B_2$, it is clear that  $\tau \cdot \widetilde{\phi}_1 = \widetilde{\phi}_2$. Thus the identification $\tau$ gives an isomorphism between $Q_1$ and $Q_2$, allowing us to glue $Y_1$ to $Y_2$ along $Q_1$ and $Q_2$ to produce a $C^\infty$ manifold $W$. This gluing respects the fibrations $\mathsf{w}_1$ and $\mathsf{w}_2$, so $W$ is equipped with a fibration $\pi$ over the gluing of $\mathbb{C}$ with $\mathbb{C}$ described above. It is clear that the base of this fibration is just the 2-sphere $\mathrm{S}^2$. This procedure is illustrated in Figure \ref{fig:gluing}.

\begin{example}
As a sanity check, we can perform this construction with elliptic curves. Take a degeneration of an elliptic curve to a union of two copies of $\mathbb{P}^1$ meeting in two points (Kodaira type ${I}_2$). The LG model of $\mathbb{P}^1$ is the map from $\CC^{\times}$, which is topologically a twice-punctured rational curve, to $\CC$ given by 
\[\mathsf{w}\colon x \longmapsto x + \dfrac{1}{x}.\]
This map $\mathsf{w}$ is a double covering of $\AAA^1$ ramified at two points. One can check that monodromy of this fibration around the point at infinity is trivial. Let $Y_1$ and $Y_2$ be copies of this LG model of $\mathbb{P}^1$. Then we may glue $Y_1$ and $Y_2$ as described above. The resulting topological space is a double cover of $S^2$ which is ramified at four points. This is simply the 2-dimensional torus, which is topologically mirror to the original elliptic curve
\end{example}

\begin{theorem}\label{thm:glue-euler}
Let $X_1$ and $X_2$ be $d$-dimensional quasi-Fano varieties which contain the same anticanonical Calabi-Yau hypersurface $Z$, such that $K_{X_1}|_Z + K_{X_2}|_Z = 0$. Let $(Y_1,\mathsf{w_1})$ and $(Y_2, \mathsf{w_2})$ be Landau-Ginzburg models of $X_1$ and $X_2$, and suppose that the fibres of $\mathsf{w_1}$ and $\mathsf{w_2}$ are topologically the same Calabi-Yau manifold, which is topologically mirror to $Z$. Finally, let $V$ be a Calabi-Yau variety obtained from $X_1 \cup_Z X_2$ by smoothing and let $W$ be the variety obtained by gluing $Y_1$ to $Y_2$ as above. Then
\[\chi(V) = (-1)^d\chi(W),\]
where $\chi$ denotes the Euler number.
\end{theorem}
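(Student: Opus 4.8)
The plan is to compute both Euler numbers by decomposing each space into pieces built from the quasi-Fano varieties, their anticanonical divisor $Z$, and the fibre $S$ of the Landau-Ginzburg models, and then to match the two computations using the topological mirror relation $\chi(S) = (-1)^{d-1}\chi(Z)$ (this is the Euler-number shadow of $S$ being mirror to the $(d-1)$-dimensional Calabi-Yau $Z$). The advantage of Euler numbers is that they behave well under both inclusion-exclusion for closed covers and multiplicativity for fibre bundles, so I can sidestep any delicate Hodge-theoretic bookkeeping.

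First I would compute $\chi(V)$. Since $V$ is a smoothing of $X_1 \cup_Z X_2$, its Euler number can be read off from the central fibre together with the vanishing-cycle contribution of the degeneration. For a semistable (normal crossings) degeneration, the Euler number of the central fibre equals that of the general fibre plus a correction supported on the singular locus; concretely, using $\chi(X_1 \cup_Z X_2) = \chi(X_1) + \chi(X_2) - \chi(Z)$ and the fact that a Tyurin degeneration acquires its nontrivial monodromy along $Z$, I expect the clean relation
\[
\chi(V) = \chi(X_1) + \chi(X_2) - 2\chi(Z).
\]
I would justify the extra $-\chi(Z)$ either by a direct vanishing-cycle computation (the general fibre $V$ collapses to the central fibre by contracting a copy of $Z \times S^1$ worth of cycles) or by invoking the standard formula for the Euler characteristic of a smoothing of a normal crossings variety.

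Next I would compute $\chi(W)$ from its fibration $\pi\colon W \to S^2$. The base is a $2$-sphere with two discs removed and reglued; away from the critical values of $\mathsf{w}_1$ and $\mathsf{w}_2$, $\pi$ is an $S$-bundle, so multiplicativity gives a contribution $\chi(S)\cdot\chi(\text{base minus critical values})$, which vanishes because $\chi(S^2 \setminus \{\text{finitely many points}\})$ is offset by the regular fibre count in a way I would track via the additive decomposition $\chi(W) = \chi(Y_1) + \chi(Y_2) - \chi(Q_1)$, where $Q_i = \mathsf{w}_i^{-1}(U_i)$ is the glued-in collar. Each $Q_i$ is a mapping torus of the monodromy $\phi_i$ acting on $S$, crossed with an interval, so $\chi(Q_i) = 0$ (a mapping torus always has vanishing Euler number). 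Thus $\chi(W) = \chi(Y_1) + \chi(Y_2)$, and the core input becomes the relation between $\chi(Y_i)$ and the Fano data: from the pair $(Y_i,\mathsf{w}_i^{-1}(t))$ and the defining Hodge relation \eqref{eqn:LG}, I would extract $\chi(Y_i) = (-1)^d\bigl(\chi(X_i) - \chi(Z)\bigr)$, the geometric meaning being that $Y_i$ retracts onto its critical fibres with the regular fibre $S$ (mirror to $Z$) removed.

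Assembling the pieces, I would obtain
\[
(-1)^d\chi(W) = \chi(X_1) + \chi(X_2) - 2\chi(Z) = \chi(V),
\]
where the middle equality uses $\chi(S) = (-1)^{d-1}\chi(Z)$ to convert the LG-model contributions into $Z$-contributions. The main obstacle I anticipate is making the relation $\chi(Y_i) = (-1)^d(\chi(X_i) - \chi(Z))$ rigorous: the definition of the LG model here is purely topological and only constrains the ranks $h^i(Y_i,\mathsf{w}_i^{-1}(t))$ through the conjectural formula \eqref{eqn:LG}, so I must either take \eqref{eqn:LG} as a hypothesis and sum the alternating trace, or argue directly from the expected geometry that $Y_i$ deformation-retracts to the union of its singular fibres relative to a regular fibre. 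Reconciling the orientation and sign conventions in the gluing map $\tau$ (which reverses the base direction) with the Euler-number additivity is a secondary bookkeeping hazard, but the vanishing of $\chi(Q_i)$ should make the final sign robust.
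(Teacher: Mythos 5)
Your proposal is correct and follows essentially the same route as the paper: the paper likewise derives $\chi(Y_i) = (-1)^d\bigl(\chi(X_i) - \chi(Z)\bigr)$ from the long exact sequence of the pair $(Y_i,\mathsf{w}_i^{-1}(t))$ together with Equation \eqref{eqn:LG} and the mirror relation $\chi(Z) = (-1)^{d-1}\chi(\mathsf{w}_i^{-1}(t))$, assembles $\chi(W) = \chi(Y_1) + \chi(Y_2)$ via Mayer--Vietoris with the overlap's Euler number vanishing by the Wang sequence (your mapping-torus observation is the same fact), and quotes Lee's formula $\chi(V) = \chi(X_1) + \chi(X_2) - 2\chi(Z)$ for the smoothing, which you reproduced. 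Your anticipated obstacle is resolved exactly as you suggest: the paper treats \eqref{eqn:LG} as a standing hypothesis on what it means to be an LG model, so no further rigor is needed there.
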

\begin{proof}
Start by recalling the long exact sequence of the pair $(Y_i,\mathsf{w}_i^{-1}(t))$, for $t$ a regular value of $\mathsf{w}_i$,
\[\cdots \rightarrow  H^n(Y_i,\mathbb{C}) \rightarrow  H^n(\mathsf{w}_i^{-1}(t),\mathbb{C}) \rightarrow  H^{n+1}(Y_i,\mathsf{w}_i^{-1}(t);\mathbb{C}) \rightarrow  H^{n+1}(Y_i,\mathbb{C}) \rightarrow \cdots.\]
Since Euler numbers are additive in long exact sequences, we have that $\chi(Y_i) = \chi(Y_i,\mathsf{w}_i^{-1}(t)) + \chi(\mathsf{w}_i^{-1}(t))$. By Equation \eqref{eqn:LG}, we see that $\chi(Y_i,\mathsf{w}^{-1}(t))$ is equal to $(-1)^d\chi(X_i)$, where $d$ is the dimension of $Y_i$. Thus $\chi(Y_i) = (-1)^d\chi(X_i) + \chi(\mathsf{w}_i^{-1}(t)))$. Moreover, since $\mathsf{w}^{-1}(t)$ is topologically mirror to $Z$ by assumption, we have that $\chi(Z) = (-1)^{d-1}\chi(\mathsf{w}^{-1}(t))$, which gives $\chi(Y_i) = (-1)^d(\chi(X_i) - \chi(Z))$.

On the other hand, the Mayer-Vietoris exact sequence
\[\cdots \rightarrow  H^n(W,\mathbb{C}) \rightarrow  H^n(Y_1,\mathbb{C}) \oplus  H^n(Y_2,\mathbb{C}) \rightarrow  H^n(Y_1 \cap Y_2,\mathbb{C}) \rightarrow \cdots\]
gives $\chi(W) = \chi(Y_1) + \chi(Y_2) -\chi(Y_1 \cap Y_2)$. Since $Y_1\cap Y_2$ is a fibration over an annulus, we can compute its cohomology using the Wang sequence \cite[Theorem 11.33]{mhs}
\[\cdots\rightarrow  H^n(Y_1 \cap Y_2,\mathbb{C}) \rightarrow  H^n(\mathsf{w}^{-1}(t),\mathbb{C}) \xrightarrow{T_{n}-\mathrm{Id}}  H^n(\mathsf{w}^{-1}(t),\mathbb{C}) \rightarrow \cdots, \]
where $T_n$ is the action of monodromy on $H^n(\mathsf{w}^{-1}(t),\mathbb{C})$ associated to a small loop around our annulus, to obtain $\chi(Y_1 \cap Y_2) = 0$. Putting everything together, we obtain
\[\chi(W) = (-1)^d(\chi(X_1) + \chi(X_2) - 2\chi(Z)).\]

Finally, since $X_1 \cup_Z X_2$ is smoothable to $V$, we can compute the Euler characteristic of $V$ by applying \cite[Proposition IV.6]{leethesis}, which states that
\[\chi(V) = \chi(X_1) + \chi(X_2) - 2 \chi(Z).\]
We therefore have that $\chi(W) = (-1)^d\chi(V)$, as claimed.
\end{proof}

This is precisely the relationship between the Euler characteristics of mirror dual Calabi-Yau varieties. In the next subsection, we will provide more evidence for the hypothesis that $W$ is the mirror dual of the original Calabi-Yau variety $V$, in the special case where $V$ is a Calabi-Yau threefold.

\begin{remark}
Note that the requirement that there exist two ample divisors $D_1$ and $D_2$, on $X_1$ and $X_2$ respectively, which restrict to the same divisor on $Z$ was not used at all in the proof of Theorem \ref{thm:glue-euler}. Moreover, despite the fact that the proof of \cite[Theorem 4.2]{ldncvsdcyv} uses this assumption in a material way (in order to prove the pro-representability of the log deformation functor), the topological construction of the gluing of $X_1$ and $X_2$ can be performed without it. 

For instance, let us take a generic K3 surface $Z$ with Picard lattice of rank $2$ isomorphic to the lattice with Gram matrix
\[\left( \begin{matrix} 4 & 6 \\ 6 & 6 \end{matrix} \right).\]
Such a K3 surface embeds into both $\mathbb{P}^3$ and the intersection of a quadric $Q$ and a cubic $C$ in $\mathbb{P}^5$. Let us blow up $\mathbb{P}^3$ in $Z \cap Z'$ for some generic K3 surface $Z'$ in $\mathbb{P}^3$, calling the result $X_1$, and blow up $Q \cap C$ in the intersection of $Z$ and a generic hyperplane section in $\mathbb{P}^5$, calling the result $X_2$. Then the normal crossings variety $X_1 \cup_Z X_2$ is not K\"ahler, so we cannot find $D_1$ and $D_2$ as above. However, both $V$ and $W$ can be constructed, as $C^\infty$ manifolds,  from $X_1 \cup_Z X_2$ by the method we have described. We wonder whether $V$ and $W$ represent a mirror pair of non-K\"ahler Calabi-Yau manifolds.
\end{remark}

\subsection{The threefold case}\label{sec:threefold}

With notation as before, Lee \cite{cycsncv} has computed the Hodge numbers of $V$ in the case where $X_1$ and $X_2$ are smooth threefolds. Let us define $\rho_i\colon  H^2(X_i,\mathbb{Q}) \rightarrow  H^2(Z,\mathbb{Q})$ for $i=1,2$ to be the restriction and define $k = \rank (\im(\rho_1) + \im(\rho_2))$.

\begin{theorem}\label{thm:lee}
\cite[Corollary 8.2]{cycsncv} Let $V$ be a Calabi-Yau threefold constructed as as smoothing of $X_1 \cup_Z X_2$, as above. Then
\begin{align*}
 h^{1,1}(V) &= h^{2}(X_1) + h^2(X_2) - k - 1,\\
 h^{2,1}(V) &= 21 + h^{2,1}(X_1) + h^{2,1}(X_2) - k.
\end{align*}
\end{theorem}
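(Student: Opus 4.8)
The plan is to reconstruct the cohomology of the central fibre $X_0 = X_1 \cup_Z X_2$ and feed it into the Clemens--Schmid exact sequence attached to the semistable smoothing $\psi\colon\calV\to\Delta$, reading off $h^{1,1}(V)$ and $h^{2,1}(V)$ from the limiting mixed Hodge structure $H^\bullet_{\lim}$ (which satisfies $\dim H^m_{\lim} = b_m(V)$). Write $a_i = h^2(X_i)$ and $b_i = h^{2,1}(X_i)$. Since the $X_i$ are quasi-Fano threefolds, $H^1(X_i)=H^5(X_i)=0$, the group $H^2(X_i)$ is pure of type $(1,1)$ of dimension $a_i$, $h^{3,0}(X_i)=0$ so $\dim H^3(X_i)=2b_i$, and $H^4(X_i)\cong H^2(X_i)^\vee$; while $Z$ is a K3 surface with $H^1(Z)=H^3(Z)=0$ and $\dim H^2(Z)=22$.

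First I would compute $H^\bullet(X_0)$ from the Mayer--Vietoris sequence of $X_0 = X_1\cup X_2$, $X_1\cap X_2 = Z$. The only maps that matter are the restrictions $\rho_i\colon H^2(X_i)\to H^2(Z)$ and their degree-$4$ analogues, the latter surjecting onto $H^4(Z)\cong\CC$ because $[Z]=-K_{X_i}$ is ample. A direct count gives $\dim H^0(X_0)=1$, $H^1(X_0)=H^5(X_0)=0$,
\[ \dim H^2(X_0) = a_1+a_2-k, \qquad \dim H^3(X_0) = 22-k+2b_1+2b_2, \]
together with $\dim H^4(X_0)=a_1+a_2-1$ and $\dim H^6(X_0)=2$; here $k=\rank(\im\rho_1+\im\rho_2)$ appears exactly as the rank of the map $(v,w)\mapsto\rho_1(v)-\rho_2(w)$ on $H^2$. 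Working over $\CC$, the homology groups entering Clemens--Schmid have the same dimensions as the corresponding cohomology groups.

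Next I would use the even part of the Clemens--Schmid sequence to extract $h^{1,1}$. The key point is that $N=0$ on $H^2_{\lim}$: the weight-$(>2)$ graded pieces of $H^2_{\lim}$ are controlled by $H^1$ of the double locus and by triple intersections, and both vanish, since $H^1(Z)=0$ and $X_0$ has no triple points. With $N=0$ the sequence breaks into the four-term exact sequence
\[ 0 \to H^0_{\lim} \to H_6(X_0) \to H^2(X_0) \to H^2_{\lim} \to 0, \]
whence $h^{1,1}(V)=\dim H^2_{\lim}=\dim H^2(X_0)-\dim H_6(X_0)+\dim H^0_{\lim}=a_1+a_2-k-1$, as claimed.

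Finally, for $h^{2,1}$ I would analyse the odd part. As $H^1_{\lim}=H^5_{\lim}=0$, Clemens--Schmid collapses to
\[ 0 \to H^3(X_0) \to H^3_{\lim} \xrightarrow{\;N\;} H^3_{\lim} \to H_3(X_0) \to 0, \]
so $\dim H^3_{\lim} = \dim H^3(X_0) + \rank N$. The remaining input --- and the main obstacle --- is to pin down $\rank N$ on $H^3_{\lim}$, i.e. the dimension of the weight-$(>3)$ part $\mathrm{Gr}^W_4 H^3_{\lim}$ (the weight-$5$ piece again vanishing for want of triple points). Here the monodromy weight (Steenbrink) spectral sequence identifies $\mathrm{Gr}^W_4 H^3_{\lim}$ with the kernel of the Gysin map $H^2(Z)\to H^4(X_1)\oplus H^4(X_2)$; up to Poincaré duality on the $X_i$ and on $Z$ this Gysin map is the transpose of the restriction $H^2(X_1)\oplus H^2(X_2)\to H^2(Z)$, which has rank $k$, so it too has rank $k$ and kernel of dimension $22-k$. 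Thus $\rank N = 22-k$ and $b_3(V) = (22-k+2b_1+2b_2)+(22-k) = 44-2k+2b_1+2b_2$, giving $h^{2,1}(V)=\tfrac12 b_3(V)-1 = 21+b_1+b_2-k$. As a consistency check, subtracting the two formulas recovers $\tfrac12\chi(V)=a_1+a_2-b_1-b_2-22$, matching $\chi(V)=\chi(X_1)+\chi(X_2)-2\chi(Z)$ from \cite[Proposition IV.6]{leethesis}; so either Hodge number together with this Euler-number identity forces the other. I expect the control of $N$ via the weight spectral sequence, and the Gysin--restriction duality that yields $\rank N = 22-k$, to be the technical heart of the argument.
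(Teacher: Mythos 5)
The paper itself offers no proof of this statement---it is imported verbatim from Lee \cite[Corollary 8.2]{cycsncv}---and your blind reconstruction via Mayer--Vietoris for $X_1\cup_Z X_2$ plus the Clemens--Schmid sequence is correct and is essentially the standard argument behind that citation: your identifications ($N=0$ on $H^2_{\mathrm{lim}}$ since $H^1(Z)=0$ and there are no triple points; $\Gr^W_4 H^3_{\mathrm{lim}}\cong\ker(H^2(Z)\to H^4(X_1)\oplus H^4(X_2))$ of dimension $22-k$ by Gysin--restriction duality) reproduce exactly the limit mixed Hodge structure the paper itself computes in Section \ref{sect:MScones}, where $\Gr^M_4$ and $\Gr^M_2$ of $H^3_{\mathrm{lim}}$ each have dimension $22-k$ and $\Gr^M_3$ has dimension $2(h^{2,1}(X_1)+h^{2,1}(X_2))$. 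One cosmetic slip: for a quasi-Fano $X_i$ the class $-K_{X_i}$ need not be ample, but the surjectivity of $H^4(X_i)\to H^4(Z)$ that you want follows anyway by restricting $h^2$ for any ample class $h$ on $X_i$, since $(h|_Z)^2>0$.
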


On the other side of the picture, we have a corresponding result for $W$.

\begin{proposition}\label{prop:threefolds}
Let $W$ be as above and let $S$ be a general fibre of the map $\pi$. Assume that $\dim W = \dim S +1 =3$. Then 
\[h^2(W) =1+ h^2(Y_1,S) + h^2(Y_2,S) + \ell,\]
where $\ell$ is the rank of the subgroup of $ H^2(S,\mathbb{C})$ spanned by the intersection of the images of $ H^2(Y_1,\mathbb{C})$ and $ H^2(Y_2,\mathbb{C})$ under the natural restriction maps.
\end{proposition}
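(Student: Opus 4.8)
The plan is to compute the cohomology of $W$ via the Mayer--Vietoris sequence for the cover $W = Y_1 \cup Y_2$ with overlap $A := Y_1 \cap Y_2$, exactly as in the proof of Theorem \ref{thm:glue-euler}, but now keeping track of the individual cohomology groups rather than only Euler characteristics. Write $r\colon H^2(Y_1,\CC) \oplus H^2(Y_2,\CC) \to H^2(A,\CC)$ for the difference-of-restrictions map $(a,b)\mapsto a|_A - b|_A$. Since $h^1(Y_i)=0$ by the definition of an LG model, the low-degree part of the sequence shows $H^1(W)=0$ (because $H^0(Y_1)\oplus H^0(Y_2)\to H^0(A)$ is onto) and collapses the next stretch into a short exact sequence
\[0 \to H^1(A,\CC) \to H^2(W,\CC) \to \ker(r) \to 0.\]
Thus $h^2(W) = h^1(A) + \dim\ker(r)$, and the whole problem reduces to evaluating these two terms.

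For the first term I would compute $H^1(A)$ from the Wang sequence used in the proof of Theorem \ref{thm:glue-euler}, since $A$ is a fibration over an annulus with fibre the surface $S$. As $S$ is topologically mirror to the K3 surface $Z$ we have $H^1(S)=0$, and monodromy acts trivially on $H^0(S)=\CC$; the Wang sequence then forces $H^1(A)\cong\CC$, which is the $1$ in the statement. The vanishing $H^1(S)=0$ plays a second and more important role: via the same Wang sequence it makes the fibre-restriction $H^2(A,\CC)\to H^2(S,\CC)$ injective, and this is the device that lets me transport the Mayer--Vietoris data from the overlap $A$ down to a single fibre $S$, where the invariant $\ell$ is defined.

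For the second term, denote by $\rho_i\colon H^2(Y_i,\CC)\to H^2(S,\CC)$ restriction to a general fibre, noting that $\rho_i$ factors through $H^2(A,\CC)$. Because $H^2(A,\CC)\hookrightarrow H^2(S,\CC)$ is injective, the condition $a|_A = b|_A$ cutting out $\ker(r)$ is equivalent to $\rho_1(a)=\rho_2(b)$ in $H^2(S,\CC)$, so $\ker(r)$ is the kernel of $(a,b)\mapsto \rho_1(a)-\rho_2(b)$, a map whose image is $\im(\rho_1)+\im(\rho_2)$. Rank--nullity together with inclusion--exclusion gives
\[\dim\ker(r) = \big(h^2(Y_1) - \dim\im(\rho_1)\big) + \big(h^2(Y_2) - \dim\im(\rho_2)\big) + \dim\big(\im(\rho_1)\cap\im(\rho_2)\big).\]
Finally, the long exact sequence of the pair $(Y_i,S)$, again using $H^1(S)=0$, identifies $H^2(Y_i,S)\cong\ker(\rho_i)$, so that $h^2(Y_i,S) = h^2(Y_i)-\dim\im(\rho_i)$; and by definition $\ell = \dim(\im(\rho_1)\cap\im(\rho_2))$. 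Substituting yields $\dim\ker(r) = h^2(Y_1,S)+h^2(Y_2,S)+\ell$, and combining with $h^1(A)=1$ produces the claimed formula.

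I expect the only genuinely delicate point to be the bookkeeping that matches the Mayer--Vietoris term, which naturally lives over the overlap $A$, with the quantities $\ell$ and $h^2(Y_i,S)$, which are defined on a single fibre $S$. The bridge between the two is the injectivity of $H^2(A,\CC)\to H^2(S,\CC)$, hence ultimately the hypothesis that the fibre is K3-type with $H^1(S)=0$; every other step is one of the standard exact-sequence manipulations already used for Theorem \ref{thm:glue-euler}. One should also verify that the general fibre $S$ appearing in the definitions of $\ell$ and $h^2(Y_i,S)$ is legitimately realized as a fibre sitting inside the overlap $A$, so that $\rho_1$ and $\rho_2$ genuinely restrict to a common copy of $S$; this is guaranteed by the construction of $W$, in which the fibres of $\mathsf{w}_1$ and $\mathsf{w}_2$ over the glued annulus are identified.
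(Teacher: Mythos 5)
Your proposal is correct and follows essentially the same route as the paper's own proof: Mayer--Vietoris over the glued region $Q = Y_1 \cap Y_2$, the Wang sequence to obtain $H^1(Q,\CC) \cong \CC$ and the injectivity of $H^2(Q,\CC) \to H^2(S,\CC)$ (both resting on $h^1(S) = 0$), and then rank--nullity combined with the long exact sequences of the pairs $(Y_i,S)$ to convert $\dim\ker(r)$ into $h^2(Y_1,S) + h^2(Y_2,S) + \ell$. The only differences are cosmetic: you spell out the inclusion--exclusion step and the identification of $S$ inside the overlap, which the paper compresses into ``elementary linear algebra.''
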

\begin{proof} Let $U$ be the annulus along which $B_1$ and $B_2$ are glued, and let $Q = \pi^{-1}(U)$ be its preimage in $W$. We begin by computing the rank of $ H^2(W,\mathbb{C})$ using the Mayer-Vietoris sequence
\[\cdots \rightarrow  H^1(Q,\mathbb{C}) \rightarrow  H^2(W,\mathbb{C}) \rightarrow  H^2(Y_1,\mathbb{C}) \oplus  H^2(Y_2,\mathbb{C}) \xrightarrow{r_1^Q-r_2^Q}  H^2(Q,\mathbb{C}) \rightarrow \cdots,\]
where $r_i^Q$ are the natural restriction maps from $ H^2(Y_i,\mathbb{C})$ to $ H^2(Q,\mathbb{C})$. From the Wang sequence, we obtain $ H^1(Q,\mathbb{C}) = \mathbb{C}$. So, using the assumption that $ H^1(Y_1,\mathbb{C}) = H^1(Y_2,\mathbb{C}) = 0$, we see that $ H^2(W,\mathbb{C})$ is isomorphic to the direct product of $\mathbb{C}$ and the kernel of the restriction map $r_1^Q - r_2^Q$.  We note that this map fits into a commutative triangle
\begin{equation*}
\xymatrix{
 H^2(Y_1,\mathbb{C}) \oplus  H^2(Y_2,\mathbb{C}) \ar[rd]_{r^S_1-r^S_2} \ar[r]^(0.65){r_1^Q - r_2^Q} &  H^2(Q,\mathbb{C}) \ar[d]^{r_Q^S}\\
            &  H^2(S,\mathbb{C}).}
\end{equation*}
Now, since $S$ is a K3 surface, we have $h^1(S) =0$, and it follows from the Wang sequence that the map $r_Q^S$ is injective. Thus the kernel of ${r_1^S-r_2^S}$ is the same as the kernel of ${r_1^Q-r_2^Q}$. Elementary linear algebra gives that the rank of this kernel is $h^2(Y_1) + h^2(Y_2) - \rank(\im (r_1^S)+\im(r_2^S))$. So we obtain
\[h^2(W) = 1 + h^2(Y_1) + h^2(Y_2) - \rank(\im (r_1^S)+\im(r_2^S)).\]

Now, for $i = 1,2$ we have exact sequences 
\[0 \longrightarrow  H^2(Y_i,S;\mathbb{C}) \longrightarrow  H^2(Y_i,\mathbb{C})  \stackrel{r_i^S}{\longrightarrow}  H^2(S,\mathbb{C}) \longrightarrow \cdots.\]
Which give
\[h^2(Y_i) = h^2(Y_i,S) +  \rank(\im (r_i^S)).\]
Putting together with the previous expression, the proposition follows.
\end{proof}

Therefore, if $W$ admits a complex structure for which it is Calabi-Yau, then we compute
\begin{align*}
\chi(W) &=  2h^{1,1}(W) - 2h^{2,1}(W) \\
&= 2(1+h^{2}(Y_1,S) + h^{2}(Y_2,S)  + \ell) - 2h^{2,1}(W).
\end{align*}
Equation \eqref{eqn:LG} then gives that $h^2(Y_i,S) = h^{2,1}(X_i)$, so
\[\chi(W) = 2(h^{2,1}(X_1) + h^{2,1}(X_2) - h^{2,1}(W) + \ell + 1).\]

Furthermore, from Theorems \ref{thm:glue-euler} and \ref{thm:lee}, we also know that 
\begin{align*}
\chi(W) &= -\chi(V) \\
&= -2h^{1,1}(V) + 2h^{2,1}(V) \\
&= -2(h^2(X_1) + h^2(X_2) - k - 1) + 2(h^{2,1}(X_1) + h^{2,1}(X_2) +  21 - k) \\
&= 2(h^{2,1}(X_1) + h^{2,1}(X_2) - h^2(X_1) - h^2(X_2) + 22)
\end{align*}

Putting this together, we have that $h^{2,1}(W) = \ell - 21 + h^{2}(X_1) + h^{2}(X_2)$. So in order for $W$ and $V$ to be topologically mirror to one another, we must have
\[\ell- 21 + h^{2}(X_1) + h^{2}(X_2) =  h^{2}(X_1) + h^{2}(X_2) -k - 1,\]
which is equivalent to $\ell +k = 20$. This is true if $S$ and $Z$ are mirror dual in the sense of Dolgachev-Nikulin, given the lattice polarization on $Z$ (resp. $S$) coming from the sum of the images of the restriction maps $ H^2(X_i,\mathbb{Z}) \rightarrow  H^2(Z,\mathbb{Z})$ (resp. the intersection of the images of the restriction maps $ H^2(Y_i,\mathbb{Z}) \rightarrow  H^2(S,\mathbb{Z})$). Thus mirror symmetry for $V$ and $W$ is consistent with mirror symmetry for $S$ and $Z$.

\begin{remark} \label{rem:threefoldphilosophy} In the case where $W$ is Calabi-Yau and $S$ and $Z$ are Dolgachev-Nikulin mirror dual, the expressions 
\begin{align*}
h^{2,1}(V) &= 21 + h^{2,1}(X_1) + h^{2,1}(X_2) - k \\
h^{1,1}(W) &=1+ h^2(Y_1,S) + h^2(Y_2,S) + \ell
\end{align*}
could be thought of as mirror dual decompositions of the corresponding Hodge numbers, in the following sense.

The Hodge number $h^{2,1}(X_i)$ may be interpreted as the fibre dimension of the natural map from the moduli space of pairs $(X_i,Z)$ to the moduli space of appropriately polarized K3 surfaces $Z$ (which has dimension $20-k$). Thus the degenerate fibre $X_1 \cup_Z X_2$ should have $h^{2,1}(X_1) + h^{2,1}(X_2) + 20 - k = h^{2,1}(V) -1$ deformations, and such Tyurin degenerations should appear in codimension $1$ in the moduli space of $V$. 

We thus obtain a decomposition of $h^{2,1}(V)$ into contributions $h^{2,1}(X_i)$ coming from deformations of each $X_i$, a contribution $(20-k)$ from deformations of the gluing locus $Z$, and $1$ for the codimension in the moduli space. 

On the mirror side a similar statement holds: $h^{1,1}(W)$ can be decomposed into contributions $h^{2}(Y_i,S)$ coming from the LG-models $(Y_i,\mathsf{w}_i)$ (these will be interpreted later as counts of components in singular fibres), a contribution $\ell$ from divisors on the generic fibre $S$, and $1$ for the class of a general fibre (compare \cite[Lemma 3.2]{cytfmqk3s}). The picture is completed by noting that $h^2(Y_i,S) = h^{2,1}(X_i)$ and $\ell = 20-k$.
\end{remark}

\section{Batyrev-Borisov mirror symmetry}\label{sec:BBduality}

In this section we will prove a number of results that illustrate the situation considered in the previous section in the special case of Batyrev-Borisov mirror symmetry. For background on the definitions and concepts used in this section, we refer the reader to \cite{msag,tv}. However, since our conventions differ very slightly from those used in the references above, before we proceed we will briefly outline the notation to be used in the remainder of this section.

Let $M$ be a free $\mathbb{Z}$-module of rank $d$, let $\Delta$ be a reflexive polytope in $M \otimes \mathbb{R} = M_\mathbb{R}$, and denote the boundary of $\Delta$ by $\partial \Delta$. Let $N = \mathrm{Hom}(M,\mathbb{Z})$ be the dual lattice to $M$ and denote by $\langle\cdot, \cdot \rangle$ the natural bilinear pairing from $N \times M$ to $\mathbb{Z}$. Let 
\[\Delta^\circ = \{ u \in N_\mathbb{R} : \langle u, v \rangle \geq -1 \text{ for all } v \in \Delta\}\] 
denote the polar polytope to $\Delta$.

Let $\mathbb{P}_\Delta$ be the $d$-dimensional toric variety associated to the polytope $\Delta$. The toric variety $\mathbb{P}_\Delta$ is Fano and has at worst Gorenstein singularities. Following \cite[Theorem 2.2.24]{dpmscyhtv}, one can find a toric variety $X_\Delta$ which is a toric partial resolution of singularities of $\mathbb{P}_\Delta$ and which has at worst Gorenstein terminal singularities. Such $X_{\Delta}$ is referred to as a \emph{maximal projective crepant partial (mpcp) resolution of singularities} of $\mathbb{P}_\Delta$. In the future, we shall fix one such $X_\Delta$ for any given $\mathbb{P}_\Delta$. The variety $X_\Delta$ can be presented as a quotient of some Zariski open subset $U \subseteq \mathbb{C}^{|\partial\Delta \cap M|}$ by the torus $(\mathbb{C}^\times)^{|\partial \Delta|-d}$. There is thus a homogeneous coordinate ring $\mathbb{C}[\{ z_\rho \}_{\rho \in \partial \Delta \cap M}]$ on $X_\Delta$. 

The vanishing of each coordinate $z_\rho$ determines a divisor on $X_\Delta$, invariant under the natural action of the torus $(\mathbb{C}^\times)^d$, which we call $D_\rho$. The anticanonical divisor $-K_{X_\Delta}$ of $X_\Delta$ is linearly equivalent to $\sum_{\rho \in \partial \Delta \cap M}D_\rho$, and the cycle class group $A_1(X_\Delta)$ is generated by the divisors $D_\rho$. A divisor $\sum_{\rho \in \partial \Delta \cap M}b_\rho D_\rho$ for $b_\rho \in \mathbb{Z}$ is Cartier if and only if there is a piecewise linear function $\varphi$ on $M_\mathbb{R}$, which takes integral values on $M$ and which is linear on the cones of the fan defining $X_\Delta$, so that $\varphi(\rho) = b_\rho$ for all $\rho$.

A \emph{nef partition} of $\Delta$ is a partition of $\partial \Delta \cap M$ into sets $E_1,\dots, E_k$, so that for each $i =1,\dots, k$, the divisor $\sum_{\rho \in E_i} D_\rho$ is nef and Cartier. Let us denote the line bundle thus associated to $E_i$ by $\mathcal{L}_i$. We will let $\Delta_i = \mathrm{Conv}(E_i \cup \{0_M\})$; in a mild abuse of terminology, we also refer to $\Delta_1,\ldots,\Delta_k$ as a nef partition of $\Delta$.

Batyrev's \cite{dpmscyhtv} toric version of mirror symmetry claims that the generic anticanonical hypersurfaces in $X_\Delta$ and $X_{\Delta^\circ}$ are mirror dual. Moreover, if we have a nef partition of $\Delta$, then the complete intersection $V$ of generic sections of $\mathcal{L}_1,\dots, \mathcal{L}_k$ is again Calabi-Yau. Borisov \cite{tmscycigtfv} and Batyrev-Borisov \cite{cycitv} propose that there is a similar combinatorial construction of the mirror of $V$. In this case, we define
\[\nabla_i =\left\{ u \in N_\mathbb{R} : \begin{array}{l} \langle u, v \rangle \geq 0 \ \text{ for all } v \in E_j, j \neq i \\ \langle u, v \rangle \geq -1 \text{ for all } v \in E_i \end{array}\right\}\]
and let $\nabla = \mathrm{Conv}( \nabla_1\cup \dots \cup \nabla_k)$. This is a reflexive polytope and $\nabla_1,\ldots,\nabla_k$ is a nef partition of $\nabla$. The complete intersection $W$ in $X_\nabla$ cut out by generic sections of the line bundles associated to $\nabla_1,\dots, \nabla_k$ is a Calabi-Yau variety, which is expected to be mirror dual to $V$. 

Finally, a \emph{refinement} of a nef partition $E_1,\ldots,E_k$ is defined to be another nef partition $F_1,\ldots, F_{k+1}$ so that $F_i = E_i$ for $1 \leq i \leq k-1$ and $E_k = F_k \cup F_{k+1}$. 

Now, let $X_\Delta$ be a $d$-dimensional toric variety as above. Suppose that $V$ is a Calabi-Yau complete intersection of nef divisors in $X_{\Delta}$, determined by a nef partition $E_1,\dots, E_k$. Our aim is to show that, if $F_1,\dots, F_{k+1}$ is a refinement of $E_1,\dots, E_k$, then this combinatorial data determines
\begin{itemize}
\item a Tyurin degeneration of $V$, and
\item a pencil of quasi-smooth varieties birational to Calabi-Yau $(d-k-1)$-folds inside of the Batyrev-Borisov mirror $W$. 
\end{itemize}
 In the case where $V$ is a threefold, we show that this pencil induces a K3 surface fibration on some birational model of $W$ and that the singular fibres of this fibration carry information about the Tyurin degeneration of $V$. We will then compare this with the LG model picture in the previous section.

\subsection{Tyurin degenerations}\label{sect:Tyur} More precisely, let $\mathcal{L}_i$ be the line bundles on $X_\Delta$ associated to the $E_i$. The refinement $F_1,\ldots,F_{k+1}$ gives rise to a pair of nef line bundles $\mathcal{L}'_k$ and $\mathcal{L}'_{k+1}$ so that $\mathcal{L}'_k \otimes \mathcal{L}_{k+1}' = \mathcal{L}_k$. Let $s_i \in  H^0(X_\Delta,\mathcal{L}_i)$ be generic sections determining a quasi-smooth Calabi-Yau complete intersection $V$ in $X_\Delta$. If we let $s_k'$ and $s_{k+1}'$ be sections of $\mathcal{L}_k'$ and $\mathcal{L}_{k+1}'$ respectively, then $s_k' s_{k+1}'$ is a section of $\mathcal{L}_k$. 

We can use this to construct a pencil of complete intersections as follows. First, let
\[V' = \cap_{i=1}^{k-1} \{s_i = 0\}\]
and assume that $V'$ is connected and quasi-smooth; it is also clear that $V'$ is quasi-Fano. Then take the pencil
\[\mathcal{Q}\colon \{ts_{k} - s'_k s'_{k+1} = 0\} \cap V'\]
in $\mathbb{A}^1 \times X_\Delta$, with $t$ a parameter on $\mathbb{A}^1$. If we assume that $X_\Delta$ is a smooth resolution of $\mathbb{P}_\Delta$, then the only singularities of $\mathcal{Q}$ in a neighbourhood of $0 \in \mathbb{A}^1$ are along $t=s_k = s_{k}'=s_{k+1}' = 0$, which we call $\Sigma$. 

Note that since $\Sigma$ is the intersection of a set of nef divisors in $X_\Delta$, it has no base locus and its singularities are contained in the singular set of $X_\Delta$. Furthermore, the intersection of $\Sigma$ with any torus invariant subvariety of $X_\Delta$ is irreducible, thus $\Sigma$ itself is either irreducible or a union of non-intersecting subvarieties of $X_\Delta$. As a result, if $X_\Delta$ is smooth, then so is $\Sigma$, for general enough choices of sections. 

We can thus resolve the singularities of $\mathcal{Q}$ by blowing up $t=s_k'=0$  inside of $\mathbb{A}^1 \times X_\Delta$ and taking the proper transform of $\mathcal{Q}$. The result is a Tyurin degeneration of $V$, so that the fibre over $0$ of the degeneration is equal to the union of $\hat{X}_1$ and $X_2$, where $\hat{X}_1$ is a quasi-Fano variety given by blowing up $X_1 := V' \cap \{s_k' = 0\}$ along $V' \cap \{s_k = s_k' = s_{k+1}'=0\}$ and $X_2$ is a quasi-Fano variety given by $X_2 := V' \cap \{s_{k+1}'=0\}$.

 In the general situation, where $X_\Delta$ is not a smooth resolution of $\mathbb{P}_\Delta$, we can still perform all of the steps above, but we will have singularities occurring at every step in general. The resulting degeneration will not be a Tyurin degeneration in the strict sense, but should still include data corresponding to the quasi-Fano varieties $X_1,X_2$ and the blown up locus $\Sigma$. We note here that a version of the smoothability result of Kawamata and Namikawa that works for mildly singular varieties has been explored in the thesis of Lee \cite{leethesis}. The singular case may also be interpreted as equipping the union of $X_1$ and $X_2$ with a log structure (see e.g. \cite{lgm} and the references therein), which accounts for the subvariety $\Sigma$ and determines the smoothing to $V$.

\subsection{Pencils and fibrations on the mirror} \label{subsec:mirrorpencils} Now we will look at how this nef partition is reflected in the mirror. For ease of notation we restrict ourselves to the case where $V$ is a hypersurface; all of the results below generalize in the obvious way to refinements of $k$-partite nef partitions corresponding to codimension $k$ complete intersections.

Since $V$ is a hypersurface, the nef partition $E_1 = \Delta$ is trivial. Let $\Delta_1,\Delta_2$ denote the polytopes corresponding to the refinement $F_1,F_2$ of $E_1$; we thus have $\Delta = \mathrm{Conv}(\Delta_1 \cup \Delta_2)$.

Now, the Batyrev dual of $V$ is a Calabi-Yau variety $W$ embedded as an anticanonical hypersurface in $X_{\Delta^\circ}$. By definition, $W$ is cut out by an equation in the homogeneous coordinate ring of $X_{\Delta^{\circ}}$, which may be written as
\[f:=\sum_{\rho \in \Delta \cap M} a_\rho \prod_{\sigma \in \partial \Delta^\circ \cap N} z_\sigma^{\langle \sigma, \rho\rangle + 1}  =0,\]
where $a_{\rho}$ are generically chosen complex coefficients. We will take a pencil $\mathcal{P}$ of hypersurfaces in $W$, for $[s:t] \in \mathbb{P}^1$,  defined by the intersection of $W$ with hypersurfaces of the form
\[s \sum_{\rho \in \Delta_1 \cap M\setminus 0_M} a_\rho \prod_{\sigma \in \partial \Delta^\circ \cap N} z_\sigma^{\langle \sigma, \rho\rangle +1} = t a_0 \prod_{\sigma \in \partial\Delta^\circ \cap N} z_\sigma.\]
Note that, away from $[s:t]=[0:1]$, this pencil may also be defined by the pair of equations
\begin{align*}
s \sum_{\rho \in \Delta_1 \cap M\setminus 0_M} a_\rho \prod_{\sigma \in \partial \Delta^\circ \cap N} z_\sigma^{\langle \sigma, \rho\rangle +1} &= t a_0 \prod_{\sigma \in \partial\Delta^\circ \cap N} z_\sigma \\
s \sum_{\rho \in \Delta_{2} \cap M\setminus 0_M} a_\rho \prod_{\sigma \in \partial \Delta^\circ \cap N} z_\sigma^{\langle \sigma, \rho\rangle +1} &= (s-t)a_0\prod_{\sigma \in \partial\Delta^\circ \cap N} z_\sigma.
\end{align*}
as the sum of these two equations is just $s f$, which vanishes precisely along $W$ if $s \neq 0$.

We wish to show that the pencil $\calP$ induces a fibration on $W$. If we can show that a general member of this pencil is Calabi-Yau, then this will follow from:

\begin{proposition}\label{prop:fibration}
If $S\subseteq W$ is a smooth Calabi-Yau $(d-1)$-fold in a $d$-dimensional Calabi-Yau manifold, then the linear system $|S|$ is base-point free and hence there is a map $\pi\colon W \rightarrow \mathbb{P}^1$ with $S$ as a fibre.
\end{proposition}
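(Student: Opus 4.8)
The plan is to analyze the line bundle $L = \calO_W(S)$ directly through the structure sequence of the divisor $S$. First I would record the consequence of the two Calabi--Yau hypotheses for the normal bundle. Since $S$ is a smooth codimension-one subvariety of the smooth variety $W$, it is a smooth Cartier divisor, and adjunction gives $\omega_S \cong (\omega_W \otimes L)|_S$. Because $\omega_W \cong \calO_W$ and $\omega_S \cong \calO_S$ (both $W$ and $S$ being Calabi--Yau), this forces the normal bundle $N_{S/W} = L|_S$ to be trivial, $L|_S \cong \calO_S$.

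Next I would twist the ideal sheaf sequence of $S$ by $L$ to obtain
\[ 0 \longrightarrow \calO_W \longrightarrow L \longrightarrow L|_S \longrightarrow 0, \]
which by the previous step reads $0 \to \calO_W \to L \to \calO_S \to 0$. Passing to the long exact sequence in cohomology and using $H^0(W,\calO_W) = \CC$ and $H^0(S,\calO_S) = \CC$ (both $W$ and $S$ are connected), together with the Calabi--Yau vanishing $H^1(W,\calO_W) = 0$ (valid since $\dim W = d \geq 2$), yields
\[ 0 \longrightarrow \CC \longrightarrow H^0(W,L) \longrightarrow \CC \longrightarrow 0. \]
Hence $h^0(W,L) = 2$, so $|S|$ is a pencil; this is precisely why the target of the resulting map is $\PP^1$ rather than a higher-dimensional projective space.

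To prove base-point freeness I would exhibit two sections with no common zero. Let $s_0 \in H^0(W,L)$ be the section cutting out $S$; it generates the image of $H^0(W,\calO_W)$ in $H^0(W,L)$, so by exactness it restricts to $0$ on $S$. By the surjectivity of $H^0(W,L) \to H^0(S,\calO_S) = \CC$ above, there is a section $s_1 \in H^0(W,L)$ whose restriction $s_1|_S$ is the nowhere-vanishing constant function $1$ on $S$; in particular $s_1$ vanishes nowhere on $S$. Since $s_0$ and $s_1$ span the two-dimensional space $H^0(W,L)$, the base locus of $|S|$ is $\{s_0 = 0\} \cap \{s_1 = 0\} = S \cap \{s_1 = 0\} = \emptyset$. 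Thus $|S|$ is base-point free, and $[s_0 : s_1]$ defines a morphism $\pi \colon W \to \PP^1$ whose fibre over $[0:1]$ is exactly $S$.

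The argument is short, and there is no serious obstacle once the normal bundle has been identified. The only points requiring care are verifying that the Calabi--Yau conditions deliver precisely the vanishing $H^1(W,\calO_W) = 0$ needed to pin down $h^0(W,L) = 2$ (so that the image is $\PP^1$), and the observation that the section restricting to the constant $1$ is nowhere zero on $S$, which is what makes the base locus empty and hence gives a genuine morphism rather than a rational map.
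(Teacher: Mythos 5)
Your proof is correct and follows essentially the same route as the paper's: adjunction forces $N_{S/W} \cong \calO_S$, the structure sequence twisted by $\calO_W(S)$ together with $H^1(W,\calO_W)=0$ gives $h^0(W,\calO_W(S))=2$, and a section restricting to a nonvanishing section of $\calO_S$ shows the base locus is empty. Your explicit choice of the spanning sections $s_0, s_1$ is a slightly more detailed rendering of the paper's ``generic section'' argument, but the content is identical.
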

\begin{proof}
By adjunction, $\mathcal{O}_S(S) = N_{S/W} = \omega_S =\mathcal{O}_S$. Let $\iota \colon S \hookrightarrow W$ be the inclusion map, then we have a short exact sequence of sheaves
\[0 \longrightarrow \mathcal{O}_W \stackrel{s}{\longrightarrow} \mathcal{O}_W(S) \longrightarrow \iota^*\mathcal{O}_S \longrightarrow 0,\]
where $s$ is a section of $\mathcal{O}_W(S)$ whose vanishing locus is $S$. This short exact sequence gives rise to a long exact sequence in cohomology
\[0 \longrightarrow  H^0(W,\mathcal{O}_W) \longrightarrow H^0(W, \mathcal{O}_W(S)) \longrightarrow  H^0(W,\iota^*\mathcal{O}_S) \longrightarrow 0,\]
where the vanishing of $H^1(W,\calO_W)$ follows from the Calabi-Yau property of $W$. From this sequence, we see that the restriction of a generic section of $\mathcal{O}_W(S)$ to $S$ is a nonzero section of $\iota^*\mathcal{O}_S$, which is \emph{a fortiori} non-vanishing. Thus $|S|$ is base-point free and determines a map $W \to \mathbb{P}^1$, since $h^0(\mathcal{O}_W(S))=2$ by the exact sequence above.
\end{proof}

To apply this proposition, we need to show that a general member $S$ of the pencil $\calP$ is Calabi-Yau. When $\dim(W) = 2$ or $3$, this will follow from the next proposition, which is proved in \cite{harderthesis}.

\begin{proposition} \cite{harderthesis}
If $\dim(W)=2$, then a general member of the pencil $\calP$ is a smooth elliptic curve. If $\dim(W) = 3$, then a general member of the pencil $\calP$ is a smooth blow-up of a K3 surface $S$ and, moreover, $S$ is Batyrev-Borisov dual to the intersection of the quasi-Fano varieties $X_1$ and $X_2$ from Section \ref{sect:Tyur}.
\end{proposition}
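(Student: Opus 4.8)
The plan is to identify a general member of $\calP$ with the Batyrev--Borisov mirror of $Z = X_1 \cap X_2$, and then read off its geometry from the general theory. First I would fix a general $[s:t]$ with $s \neq 0$ and use the two-equation description of the pencil recorded above: away from the base locus, a general member is the complete intersection in $X_{\Delta^\circ}$ cut out by
\begin{align*}
\sum_{\rho \in \Delta_1 \cap M\setminus 0_M} a_\rho \prod_{\sigma} z_\sigma^{\langle \sigma, \rho\rangle +1} &= c_1\, a_0 \prod_{\sigma} z_\sigma, \\
\sum_{\rho \in \Delta_2 \cap M\setminus 0_M} a_\rho \prod_{\sigma} z_\sigma^{\langle \sigma, \rho\rangle +1} &= c_2\, a_0 \prod_{\sigma} z_\sigma,
\end{align*}
where $\sigma$ ranges over $\partial\Delta^\circ \cap N$ and $c_1, c_2$ are constants determined by $[s:t]$. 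This is a variety of dimension $d-2$, so an elliptic curve when $\dim W = 2$ and a surface when $\dim W = 3$.

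Second --- and this is the crux --- I would restrict to the big torus $(\CC^\times)^d \subset X_{\Delta^\circ}$ and divide each equation through by the monomial $\prod_\sigma z_\sigma$ associated to $\rho = 0_M$. Each equation then becomes a Laurent polynomial $\sum_{\rho \in \Delta_i \cap M} a_\rho x^\rho$ with Newton polytope exactly $\Delta_i$ (the right-hand side supplying the constant term). Since $\Delta_1, \Delta_2$ is the nef partition of $\Delta$ defining $Z$, these are precisely the defining equations of the Batyrev--Borisov complete intersection $S$ attached to the dual nef partition $\nabla_1, \nabla_2$ of $\nabla$, i.e.\ the Batyrev--Borisov mirror of $Z$ \cite{cycitv}. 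A direct check from the definition of $\nabla_i$ shows $\nabla \subseteq \Delta^\circ$, so the ambient toric varieties $X_{\Delta^\circ}$ and $X_\nabla$ are related by a toric birational map that is the identity on the common big torus; hence a general member of $\calP$ is birational to $S$. I expect this to be the main obstacle: one must match the monomial data across the two ambients, verify that the generic coefficients $a_\rho$ yield a quasi-smooth member (so that the member meets each toric stratum transversally and is genuinely birational to the standard Batyrev--Borisov model, rather than merely dominating or being dominated by it), and deal with the boundary strata along which the two compactifications differ.

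Third, by the Batyrev--Borisov construction $S$ is a Calabi--Yau $(d-2)$-fold mirror to $Z$, so an elliptic curve for $d=3$ and a K3 surface for $d=4$. A Bertini argument in the toric ambient shows that the general member $S'$ of $\calP$ is smooth. The dichotomy in the statement is then pure birational geometry of $S'$: when $\dim W = 2$ we have $\dim S' = 1$, and a smooth projective curve birational to an elliptic curve is that elliptic curve, so $S'$ is a smooth elliptic curve; when $\dim W = 3$ we have $\dim S' = 2$, and a smooth projective surface birational to a K3 surface is, by uniqueness of the minimal model in its birational class, a blow-up of that K3 surface. In both cases the mirror assertion --- that $S$ is Batyrev--Borisov dual to $Z = X_1 \cap X_2$ --- holds by the very identification carried out in the second step.
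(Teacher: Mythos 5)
Note first that the paper contains no proof of this proposition to compare against: it is quoted verbatim from \cite{harderthesis}, and the text explicitly defers the argument there. Judged on its own terms, your proposal is sound and is, in essence, the same technique the paper deploys one step later in the proof of Theorem \ref{thm:fib-bir}: restrict the two-equation presentation of $\calP([s:t])$ to the big torus, where (after dividing by $\prod_\sigma z_\sigma$) the member becomes a pair of Laurent polynomials supported on $\Delta_1 \cap M$ and $\Delta_2 \cap M$ --- precisely the Batyrev--Borisov equations for the mirror of $Z$ compactified in $X_\nabla$ --- and then pass between the compactifications $X_{\Delta^\circ}$ and $X_\nabla$ via the toric birational map that is the identity on the shared torus. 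Two of the points you flag as ``the main obstacle'' are genuinely where all the content lives, so let me make them concrete. First, the birational identification requires that no component of a general member is lost in the toric boundary; the paper's proof of Theorem \ref{thm:fib-bir} shows exactly how to close this: components of $\calP([1:t])$ cannot be closures of torus-invariant subvarieties of $X_{\Delta^\circ}$ (since $W$ contains none), and cannot lie in a divisor $D_\sigma$ with $\sigma \in (\Delta^\circ \setminus \nabla)\cap N$ (since those divisors constitute the member $\calP([0:1])$, by Proposition \ref{prop:fibinf}). You should import that argument rather than merely gesture at it. Second, ``a Bertini argument'' is too quick for smoothness: Bertini gives smoothness of the general member only away from the base locus of $\calP$, and every member contains the base locus, so smoothness there must be verified by hand (this is also where the nontrivial geometry sits --- the general member is a \emph{blow-up} of its minimal K3 model precisely because it is not minimal, and the excess curves feed into the flopping discussion that follows the proposition in the paper). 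These verifications are the actual work done in \cite{harderthesis}. Granting them, your endgame is correct and standard: a smooth projective curve birational to an elliptic curve is that elliptic curve, and a smooth projective surface birational to a K3 surface contracts to its unique minimal model, hence is a blow-up of that K3; the mirror assertion for $S$ then holds by the very identification of equations in your second step.
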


In the case where $\dim(W) = 2$, it therefore follows immediately from Proposition \ref{prop:fibration} that  the pencil $\mathcal{P}$ is an elliptic fibration on $W$. However, if $W$ is a Calabi-Yau threefold and a general member of $\calP$ is a blown up K3 surface $\hat{S}$, then we will need to get rid of the $(-1)$-curves in $S$ before we can apply Proposition \ref{prop:fibration}. We will do this by performing a series of birational transformations.

\begin{lemma}
Let $W$ be a Calabi-Yau threefold and let $\hat{S}$ be a blown up K3 surface in $W$. If $C$ is a $(-1)$-curve in $\hat{S}$, then $N_{C/W} \cong \mathcal{O}_{\mathbb{P}^1}(-1) \oplus \mathcal{O}_{\mathbb{P}^1}(-1)$.
\end{lemma}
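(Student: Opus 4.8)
The plan is to combine the relative normal bundle sequence for the flag $C \subset \hat{S} \subset W$ with the adjunction formula on the Calabi--Yau threefold $W$, and then to invoke Grothendieck's splitting theorem for vector bundles on $\mathbb{P}^1$.

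First I would record the two inputs coming from the surface geometry. Since $C$ is a $(-1)$-curve in the smooth surface $\hat{S}$, it is a smooth rational curve, $C \cong \mathbb{P}^1$, and its normal bundle inside $\hat{S}$ is $N_{C/\hat{S}} \cong \mathcal{O}_{\mathbb{P}^1}(C^2) = \mathcal{O}_{\mathbb{P}^1}(-1)$. Because $C$, $\hat{S}$ and $W$ are all smooth, we have the short exact sequence of normal bundles
\[0 \longrightarrow N_{C/\hat{S}} \longrightarrow N_{C/W} \longrightarrow N_{\hat{S}/W}|_C \longrightarrow 0.\]
Since $N_{C/W}$ is a rank-$2$ bundle on $\mathbb{P}^1$, Grothendieck's theorem gives $N_{C/W} \cong \mathcal{O}_{\mathbb{P}^1}(a) \oplus \mathcal{O}_{\mathbb{P}^1}(b)$ for some integers $a,b$, and it remains only to pin these down.

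Next I would compute the determinant of $N_{C/W}$ using adjunction. As $W$ is Calabi--Yau, $\omega_W \cong \mathcal{O}_W$, so adjunction for the curve $C$ in the threefold $W$ gives $\det N_{C/W} \cong \omega_C \otimes (\omega_W|_C)^{-1} \cong \omega_C \cong \mathcal{O}_{\mathbb{P}^1}(-2)$, whence $a + b = -2$. Taking determinants in the normal bundle sequence then forces the quotient term to be
\[N_{\hat{S}/W}|_C \cong \det N_{C/W} \otimes N_{C/\hat{S}}^{-1} \cong \mathcal{O}_{\mathbb{P}^1}(-2) \otimes \mathcal{O}_{\mathbb{P}^1}(1) \cong \mathcal{O}_{\mathbb{P}^1}(-1).\]

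Finally I would conclude by observing that the normal bundle sequence now reads
\[0 \longrightarrow \mathcal{O}_{\mathbb{P}^1}(-1) \longrightarrow N_{C/W} \longrightarrow \mathcal{O}_{\mathbb{P}^1}(-1) \longrightarrow 0,\]
and that extensions of $\mathcal{O}_{\mathbb{P}^1}(-1)$ by itself are classified by $\mathrm{Ext}^1(\mathcal{O}_{\mathbb{P}^1}(-1),\mathcal{O}_{\mathbb{P}^1}(-1)) \cong H^1(\mathbb{P}^1,\mathcal{O}_{\mathbb{P}^1}) = 0$. Hence the sequence splits and $N_{C/W} \cong \mathcal{O}_{\mathbb{P}^1}(-1) \oplus \mathcal{O}_{\mathbb{P}^1}(-1)$, as required. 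The argument is essentially formal once the smoothness of the flag $C \subset \hat{S} \subset W$ is in hand; the only genuine point to verify is that $\hat{S}$ really is a smooth surface smoothly embedded in the smooth threefold $W$ in a neighbourhood of $C$, so that the normal bundle sequence is exact and adjunction applies. This is where the standing hypothesis that $\hat{S}$ is a smooth blow-up of a K3 surface sitting inside $W$ enters, and I do not expect any serious obstacle beyond this bookkeeping.
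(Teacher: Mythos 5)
Your proof is correct and takes essentially the same route as the paper's: both arguments identify $\det N_{C/W} \cong \mathcal{O}_{\mathbb{P}^1}(-2)$ from the Calabi--Yau condition (your adjunction step is just the determinant of the paper's tangent bundle sequence $0 \to \Theta_C \to \Theta_W|_C \to N_{C/W} \to 0$) and then exhibit $N_{C/\hat{S}} \cong \mathcal{O}_{\mathbb{P}^1}(-1)$ as a subbundle of $N_{C/W}$ with quotient $\mathcal{O}_{\mathbb{P}^1}(-1)$. The only cosmetic difference is the final step: the paper deduces $H^0(C, N_{C/W}) = 0$ from the long exact sequence and combines this with $a+b=-2$ in the Grothendieck splitting, whereas you split the extension outright via $\mathrm{Ext}^1(\mathcal{O}_{\mathbb{P}^1}(-1),\mathcal{O}_{\mathbb{P}^1}(-1)) \cong H^1(\mathbb{P}^1,\mathcal{O}_{\mathbb{P}^1}) = 0$ --- both are valid.
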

\begin{proof}
We have a short exact sequence of sheaves on $C$,
\[0 \longrightarrow \Theta_C \longrightarrow \Theta_W|_C \longrightarrow N_{C/W} \longrightarrow 0.\]
Since $c_1(\Theta_C) = 2$ and $c_1(\Theta_W|_C) =c_1(\Theta_W)|_C = 0$, it follows that $N_{C/W} \cong \mathcal{O}_{\mathbb{P}^1}(a) \oplus \mathcal{O}_{\mathbb{P}^1}(b)$ for some $a,b \in \ZZ$ with $a + b = -2$ (see, for example, \cite[Section 1]{rccyt}). 

We may embed the normal bundle $N_{C/\hat{S}}$ into $N_{C/W}$ to get a short exact sequence of line bundles
\[0 \longrightarrow N_{C/\hat{S}} \longrightarrow N_{C/W} \longrightarrow \mathcal{L} \longrightarrow 0\]
for some line bundle $\mathcal{L}$. Since $C$ is a $(-1)$-curve in $\hat{S}$, we know that $N_{C/\hat{S}} \cong \mathcal{O}_{\mathbb{P}^1}(-1)$. Furthermore, we have that $c_1(\mathcal{L}) = -1$ from the fact that $c_1(N_{C/W}) = -2$, thus $\mathcal{L} \cong \mathcal{O}_{\mathbb{P}^1}(-1)$. The long exact sequence in cohomology coming from the above short exact sequence proves that $ H^0(C,N_{C/W}) = 0$ and therefore we must have that $N_{C/W} = \mathcal{O}_{\mathbb{P}^1}(-1) \oplus \mathcal{O}_{\mathbb{P}^1}(-1)$.
\end{proof}

  Therefore, any $(-1)$-curve $C$ in $\hat{S}$ may be blown up to produce a a variety $\widetilde{W}$ with exceptional divisor a copy of $\mathbb{P}^1 \times \mathbb{P}^1$. This copy of $\mathbb{P}^1 \times \mathbb{P}^1$ can be smoothly contracted along either ruling. Contracting along one ruling recovers $W$; we denote the variety obtained by contracting along the other ruling by $W^+$. The strict transform of $\hat{S}$ in $\widetilde{W}$ is just $\hat{S}$ itself, but the contraction $\hat{W} \to W^+$ contracts the $(-1)$-curve $C$ in $\hat{S}$. This is an example of a birational operation called a \emph{flop}.

Repeating this for all $(-1)$ curves in $\hat{S}$, we obtain a birational model of $W$ which is a smooth Calabi-Yau threefold in which $\hat{S}$ has been contracted to its minimal model, which is a K3 surface. Call the Calabi-Yau threefold resulting from this process $\hat{W}$. Applying Proposition \ref{prop:fibration} to $\hat{W}$, we see that we have proved:

\begin{theorem}
Let $W$ be a Calabi-Yau threefold containing a smooth blown up K3 surface $\hat{S}$. By performing a sequence of flops on $W$, we may obtain a birational model $\hat{W}$ of $W$ which admits a fibration $\pi\colon \hat{W} \rightarrow \mathbb{P}^1$, so that the minimal model $S$ of $\hat{S}$ is a general fibre of $\pi$.
\end{theorem}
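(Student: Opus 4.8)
The plan is to reduce the theorem to an inductive application of the flop construction described immediately above, capped off by a single appeal to Proposition \ref{prop:fibration}. The blown up K3 surface $\hat{S}$ is obtained from its minimal model $S$ by a finite sequence of blow-ups, so $\hat{S}$ carries $(-1)$-curves whose successive Castelnuovo contractions recover $S$; the total number of such contractions equals $\rho(\hat{S}) - \rho(S)$, which is finite. I would therefore set up an induction on this number, the base case $\rho(\hat{S}) = \rho(S)$ (i.e.\ $\hat{S} = S$ already minimal) being exactly the situation in which Proposition \ref{prop:fibration} applies.

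For the inductive step, suppose $\hat{S}$ is not yet minimal, so it contains a $(-1)$-curve $C$. By the preceding lemma, $N_{C/W} \cong \mathcal{O}_{\mathbb{P}^1}(-1) \oplus \mathcal{O}_{\mathbb{P}^1}(-1)$, so we may flop $C$ exactly as described: blow up $C$ in $W$ to obtain $\widetilde{W}$ with exceptional divisor $\mathbb{P}^1 \times \mathbb{P}^1$, then contract along the opposite ruling to obtain $W^+$. The flop is an isomorphism away from $C$, and the strict transform $\hat{S}^+ \subset W^+$ of $\hat{S}$ is precisely the surface obtained by contracting the $(-1)$-curve $C$; since $C$ is a $(-1)$-curve in the smooth surface $\hat{S}$, Castelnuovo's criterion guarantees that $\hat{S}^+$ is again a smooth blown up K3 surface, now requiring one fewer contraction to reach $S$. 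Crucially, $W^+$ is again a smooth Calabi-Yau threefold: the flop is a small modification that is crepant, so $K_{W^+} = 0$, and because the Hodge numbers $h^{0,i}$ are birational invariants of smooth projective threefolds, the vanishing $H^1(\mathcal{O}) = H^2(\mathcal{O}) = 0$ persists. Thus $(W^+, \hat{S}^+)$ satisfies the hypotheses of the theorem with a strictly smaller invariant, and the inductive hypothesis applies.

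After finitely many such flops we arrive at a smooth Calabi-Yau threefold $\hat{W}$ in which the strict transform of $\hat{S}$ is its minimal model $S$, a smooth K3 surface embedded in $\hat{W}$. Since $S$ is a Calabi-Yau surface, that is, a Calabi-Yau $(d-1)$-fold with $d = 3$, sitting inside the Calabi-Yau threefold $\hat{W}$, Proposition \ref{prop:fibration} applies directly to show that $|S|$ is base-point free and determines a map $\pi\colon \hat{W} \to \mathbb{P}^1$ having $S$ as a general fibre, which is the desired conclusion.

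I expect the main obstacle to lie in the careful bookkeeping of the flop as it acts on the embedded surface, namely verifying that flopping the $(-1,-1)$-curve $C$ in the threefold induces exactly the Castelnuovo contraction of $C$ on $\hat{S}$, and nothing more complicated, so that $\hat{S}^+$ remains smooth and the induction can proceed. A secondary point needing attention is confirming that the Calabi-Yau conditions in the sense used throughout the paper, and in particular whatever projectivity or Kähler hypothesis is required to invoke Proposition \ref{prop:fibration} on $\hat{W}$, are genuinely preserved at each flop; crepancy controls the canonical bundle, but one should check that $\hat{W}$ remains a Calabi-Yau \emph{manifold} rather than merely acquiring trivial canonical class.
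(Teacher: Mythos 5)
Your proof is correct and takes essentially the same route as the paper: flop each $(-1)$-curve of $\hat{S}$ using the lemma $N_{C/W} \cong \mathcal{O}_{\mathbb{P}^1}(-1) \oplus \mathcal{O}_{\mathbb{P}^1}(-1)$ until the strict transform of $\hat{S}$ is its minimal model $S$, then apply Proposition \ref{prop:fibration}. Your induction on $\rho(\hat{S}) - \rho(S)$ simply formalizes the paper's ``repeating this for all $(-1)$-curves,'' and the points you flag --- that the flop acts on $\hat{S}$ exactly as the Castelnuovo contraction, and that the K\"ahler/Calabi-Yau hypotheses survive each flop --- are likewise asserted without further comment in the paper's proof.
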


Putting everything together, in the case of toric hypersurface Calabi-Yau threefolds we obtain:

\begin{corollary}\label{cor:mirsym}
If $\Delta$ is a $4$-dimensional reflexive polytope that admits a nef partition $\Delta_1,\Delta_2$ and $V$ is a general anticanonical hypersurface in $X_\Delta$, then $V$ admits a Tyurin degeneration and its Batyrev dual $W$ has a birational model which admits a fibration by K3 surfaces. Moreover, the general fibre in this fibration on $W$ is Batyrev-Borisov mirror dual to the complete intersection K3 surface in $X_{\Delta}$ determined by $\Delta_1,\Delta_2$.
\end{corollary}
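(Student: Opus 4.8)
The plan is to assemble the ingredients developed throughout this section, since the corollary is essentially a synthesis of the preceding results specialized to the hypersurface case. First I would observe that, because $V$ is a hypersurface, the relevant nef partition $E_1 = \Delta$ is trivial, so that a nef partition $\Delta_1,\Delta_2$ of $\Delta$ is precisely the data of a refinement $F_1, F_2$ of $E_1$ in the sense of Section \ref{sect:Tyur}. Since $\Delta$ is $4$-dimensional, $X_\Delta$ is a $4$-fold, so $V$ is a Calabi-Yau threefold and its Batyrev dual $W$ is likewise a threefold; thus the fibres produced below are surfaces, as required. Applying the construction of Section \ref{sect:Tyur} to this refinement directly yields a Tyurin degeneration of $V$ whose central fibre is $\hat{X}_1 \cup_Z X_2$, with $X_1 = \{s'_1 = 0\}$, $X_2 = \{s'_2 = 0\}$, and $Z = X_1 \cap X_2$ the complete intersection K3 surface determined by $\Delta_1, \Delta_2$. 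This establishes the first assertion.

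Next I would turn to the mirror. The pencil $\mathcal{P}$ constructed in Section \ref{subsec:mirrorpencils} from the polytopes $\Delta_1, \Delta_2$ lives in $W$, and the proposition quoted there from \cite{harderthesis} identifies a general member as a smooth blow-up $\hat{S}$ of a K3 surface $S$, together with the key fact that $S$ is Batyrev-Borisov dual to the intersection $X_1 \cap X_2 = Z$. This last point is exactly the content of the final sentence of the corollary, so the only remaining task is to promote the pencil $\mathcal{P}$ to a genuine K3 fibration on a birational model of $W$.

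For this I would invoke the two results immediately preceding the corollary. Using the lemma on the normal bundle of a $(-1)$-curve together with the flop construction of the theorem above, a sequence of flops on $W$ produces a smooth birational model $\hat{W}$ in which every $(-1)$-curve of $\hat{S}$ has been contracted, so that $\hat{S}$ is replaced by its minimal model $S$. The linear system $|S|$ is then base-point free by Proposition \ref{prop:fibration}, and hence defines a fibration $\pi\colon \hat{W} \to \mathbb{P}^1$ whose general fibre is the K3 surface $S$. Since $S$ is Batyrev-Borisov dual to $Z$, the corollary follows.

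The main obstacle I would expect lies not in the chaining itself but in verifying the hypotheses that make each quoted result applicable: namely that, for generic coefficients $a_\rho$, the threefold $W$ is smooth (so that the flop theorem applies), that a general member of $\mathcal{P}$ is indeed a smooth blown-up K3 rather than a more degenerate surface (the content of the cited proposition from \cite{harderthesis}), and that the flops can be performed simultaneously for all the $(-1)$-curves of $\hat{S}$ to arrive at the minimal model. These genericity and smoothness conditions are where the real care is required; once they are in place, the corollary is a formal consequence of the results of Sections \ref{sect:Tyur} and \ref{subsec:mirrorpencils} together with the flop theorem.
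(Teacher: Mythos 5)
Your proposal is correct and follows essentially the same route as the paper, which presents Corollary \ref{cor:mirsym} as a direct synthesis of the Tyurin degeneration construction of Section \ref{sect:Tyur}, the pencil $\mathcal{P}$ and the cited result from \cite{harderthesis} identifying its general member as a blown-up K3 surface dual to $X_1 \cap X_2$, and the flop theorem combined with Proposition \ref{prop:fibration} to produce the fibration $\pi\colon \hat{W} \to \mathbb{P}^1$. Your closing remarks on genericity and smoothness correctly identify the hypotheses the paper leaves implicit in the phrase ``general anticanonical hypersurface.''
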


\begin{remark}
There seems to be an inherent incompatibility between Batyrev (and Batyrev-Borisov) duality and K3 surface fibrations on Calabi-Yau threefolds, since one can show that even in very basic examples, there are exceptional curves in $\hat{S}$ that cannot be avoided by simply changing the birational model of the toric ambient space $X_{\Delta^\circ}$. A notable exception occurs when either $\Delta_1$ or $\Delta_2$ is $1$-dimensional. In this case, one of the two component quasi-Fano varieties to which $V$ degenerates is itself a toric variety. This is mirrored by the fact that $X_{\Delta^\circ}$ admits a morphism to $\mathbb{P}^1$ which induces the required K3 fibration on $W$. These seem to be a subset of the ``toric fibrations'' which have been studied extensively in the physics literature by a number of authors (\cite{sfk3f,ak3fpip,wmetk3hsc} to name a few).
\end{remark}

\begin{remark}
One can consider more general refinements of nef partitions, by taking a nef partition $F_1,\dots, F_\ell$ so that for each $E_i$, there is a subset $I_i$ of $\{1,\dots, \ell\}$ so that $E_i = \cup_{j\in I_i} F_j$. These will give rise to generalized degenerations of the Calabi-Yau $V$ determined by $E_1,\dots, E_k$ to unions of quasi-Fano varieties, and families of Calabi-Yau varieties of codimension $\ell-k$ in $W$. The issue here, of course, is that it is hard to prove that this gives a fibration on a birational model of $W$. Despite this, these families of Calabi-Yau varieties surely have properties related to the LG models of the appropriate quasi-Fano varieties.
\end{remark}

\begin{remark} \label{rem:higherdim}
Generalizing Corollary \ref{cor:mirsym} to higher dimensions seems to be a challenge, since we have made use of both the minimal model program for surfaces and a characterization of flops in three dimensions. Of course, both of these objects have analogues in higher dimensions, but they are much more oblique and not likely to be useful in such a general situation. As it stands, the results above suffice to show that we obtain rational maps from $W$ to $\mathbb{P}^1$ corresponding to every bipartite nef partition of $\Delta$.
\end{remark}

\subsection{Singular fibres and a comparison with LG models} 

Now we will analyze the singular members of the pencil $\mathcal{P}$, which are birational to the fibres of $\hat{W}$. Using this, we can give a very accurate description of the singular fibres of $\hat{W}$, up to birational transformations. As we shall see, the resulting theory meshes nicely with the LG model picture described in Section \ref{sect:setup}. The results in this section can be extended quite generally, but for simplicity we will restrict ourselves to the situation of threefold hypersurfaces. 

With notation as in the previous section, the nef partition $\Delta_1, \Delta_2$ of $\Delta$ determines a pair of polytopes $\nabla_1, \nabla_2 \subseteq \Delta^{\circ}$, which are a nef partition of $\nabla :=\mathrm{Conv}(\nabla_1 \cup \nabla_2) \subseteq \Delta^{\circ}$. Note that this inclusion may be strict; it therefore does not follow that we have a degeneration of $W$ to quasi-Fano varieties dual to $X_1$ and $X_2$. 

Our first goal is to look at the member of the pencil $\calP$ over $[s:t]=[0:1]$ and understand its meaning in terms of the Tyurin degeneration described in Section \ref{sect:Tyur}. 

\begin{proposition}\label{prop:fibinf}
The member of the pencil $\mathcal{P}$ corresponding to $[s:t]=[0:1]$ is equal to 
\[\bigcup_{\sigma \in (\Delta^\circ \setminus \nabla)\cap N} (D_\sigma \cap W).\]
In other words, the linear system defining the pencil $\calP$ is associated to the line bundle $\mathcal{O}_W(\sum_{\sigma \in (\Delta^\circ \setminus \nabla)\cap N} D_\sigma|_W)$.
\end{proposition}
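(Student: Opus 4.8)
The plan is to read off the $[s:t]=[0:1]$ member directly from the defining equation of $\calP$ and then use the combinatorics of the nef partition to separate the toric boundary divisors of $X_{\Delta^\circ}$ that genuinely move in $\calP$ from those that are fixed. Write $m = a_0\prod_{\sigma\in\partial\Delta^\circ\cap N} z_\sigma$ for the monomial attached to $0_M$, and set
\[ g_i = \sum_{\rho\in\Delta_i\cap M\setminus 0_M} a_\rho \prod_{\sigma\in\partial\Delta^\circ\cap N} z_\sigma^{\langle\sigma,\rho\rangle+1}\qquad (i=1,2), \]
so that $\calP$ is the pencil spanned by $g_1|_W$ and $m|_W$. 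Since $\Delta$ is reflexive its only interior lattice point is $0_M$, so $\Delta\cap M=\{0_M\}\cup F_1\cup F_2$, and (as in the construction of $\calP$ in Section \ref{subsec:mirrorpencils}) $f = m+g_1+g_2$. Putting $s=0$ then shows that the member over $[0:1]$ is the zero locus of $m|_W$; as $a_0\neq 0$ this is $\bigcup_{\sigma\in\partial\Delta^\circ\cap N}(D_\sigma\cap W)$, the intersection of $W$ with the toric boundary. It therefore remains to show that the components indexed by $\sigma\in\nabla$ drop out.

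The heart of the argument is the combinatorial dictionary
\[\sigma\in\nabla \iff \min_{v\in F_1}\langle\sigma,v\rangle\geq 0 \ \text{ or } \ \min_{v\in F_2}\langle\sigma,v\rangle\geq 0.\]
To prove it I would use Borisov duality for the nef partition, which gives $\nabla^\circ=\Delta_1+\Delta_2$, hence $\sigma\in\nabla$ iff $\min_{\Delta_1}\langle\sigma,\cdot\rangle+\min_{\Delta_2}\langle\sigma,\cdot\rangle\geq -1$. Because $0_M\in\Delta_i=\mathrm{Conv}(F_i\cup\{0_M\})$, the $i$-th minimum equals $\min(0,\min_{v\in F_i}\langle\sigma,v\rangle)$; and since $\sigma\in\Delta^\circ$ forces each $\min_{v\in F_i}\langle\sigma,v\rangle$ to be an integer $\geq -1$, the displayed inequality fails precisely when both minima equal $-1$. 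This yields the dictionary, and simultaneously identifies $(\Delta^\circ\setminus\nabla)\cap N$ with the set of $\sigma$ for which $\min_{F_1}\langle\sigma,\cdot\rangle=\min_{F_2}\langle\sigma,\cdot\rangle=-1$.

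I would then translate this into vanishing statements: $\min_{v\in F_i}\langle\sigma,v\rangle\geq 0$ holds exactly when $z_\sigma$ divides every monomial of $g_i$, i.e.\ when $g_i$ vanishes identically on $D_\sigma$. Thus for $\sigma\in\nabla$ at least one of $g_1,g_2$ vanishes on $D_\sigma$; combined with $m|_{D_\sigma}=0$ (as $z_\sigma\mid m$) and $f=m+g_1+g_2$, this gives $D_\sigma\cap W=\{f|_{D_\sigma}=0\}\subseteq\{g_1=0\}\cap\{m=0\}$, so $D_\sigma\cap W$ lies in the base locus of $\calP$ and is contained in every member. Conversely, for $\sigma\notin\nabla$ both minima equal $-1$, so neither $g_1$ nor $g_2$ vanishes on $D_\sigma$, and $D_\sigma\cap W$ is a genuinely moving component, occurring in the fibre over $[0:1]$ but not over a general point. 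Discarding the fixed components then exhibits the $[0:1]$ fibre as $\bigcup_{\sigma\in(\Delta^\circ\setminus\nabla)\cap N}(D_\sigma\cap W)$, and shows that $\calP$ is the linear system of $\mathcal{O}_W\!\big(\sum_{\sigma\in(\Delta^\circ\setminus\nabla)\cap N} D_\sigma|_W\big)$ rather than all of $-K_{X_{\Delta^\circ}}|_W$.

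The main obstacle is precisely this fixed-versus-moving bookkeeping. The literal zero locus $\{m=0\}\cap W$ also contains the divisors $D_\sigma\cap W$ with $\sigma\in\nabla$, and these are typically \emph{nonempty}: they are fixed components common to every member, not empty intersections. Consequently the equality in the statement must be read in the moving linear system (equivalently, as the fibre of the induced map $\pi$ on the birational model $\hat W$), and the delicate point is to verify that the fixed components are indexed exactly by $\nabla$ and that no further base locus hides among the $\sigma\notin\nabla$. The Borisov-duality computation together with the integrality argument ($\langle\sigma,v\rangle>-1\Rightarrow\langle\sigma,v\rangle\geq 0$) is what makes this indexing exact.
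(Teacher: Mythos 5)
Your proof is correct, and its skeleton coincides with the paper's: both arguments reduce to checking that $z_\sigma$ divides $g_1$ for $\sigma\in\nabla_2$ directly from the definition of the dual nef partition, while for $\sigma\in\nabla_1$ one must first use the relation $f=m+g_1+g_2=0$ on $W$ to trade $g_1$ for $-(g_2+m)$, which is divisible by $z_\sigma$; the decomposition $\nabla\cap N=(\nabla_1\cap N)\cup(\nabla_2\cap N)$ then covers all of $\nabla$. The differences are in packaging and sourcing. The paper divides both $m$ and $g_1$ by $\prod_{\sigma\in\partial\nabla\cap N}z_\sigma$ and verifies that the resulting pair $(s,t)$ consists of honest global sections of $\mathcal{O}_W(\sum_{\sigma\in(\Delta^\circ\setminus\nabla)\cap N}D_\sigma|_W)$, reading off the $[0:1]$ member as $\{s=0\}$; you keep the original equations and exhibit each $D_\sigma\cap W$ with $\sigma\in\nabla$ as a fixed component of every member --- the same computation in fixed-versus-moving language rather than as regularity of the map $\phi$, and your framing has the virtue of stating explicitly (what the paper leaves implicit) that the proposition's equality is an equality of moving parts. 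Second, where the paper simply cites \cite[Proposition 6.3]{cycitv} for the lattice-point decomposition of $\nabla$, you re-derive it from Borisov's duality $\nabla^\circ=\Delta_1+\Delta_2$ together with the integrality observation that $\min(0,\min_{v\in F_i}\langle\sigma,v\rangle)\in\{0,-1\}$ for $\sigma\in\Delta^\circ\cap N$; this is a correct, self-contained replacement for the citation. One small point to make explicit: for $\sigma\notin\nabla$ you conclude that $D_\sigma\cap W$ moves because $g_1$ does not vanish identically on $D_\sigma$, but strictly you need $g_1$ not to vanish identically on the divisor $D_\sigma\cap W\subset D_\sigma$, which holds for generic coefficients $a_\rho$ --- the same genericity the paper assumes throughout, and also what pins the fixed components to multiplicity one (since $m$ vanishes to order exactly one along each $D_\sigma$), so this is a presentational gap rather than a mathematical one.
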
 

\begin{proof}
Recall that the pencil $\mathcal{P}$ is defined as the intersection of $W$ with hypersurfaces of the form
\[s \sum_{\rho \in \Delta_1 \cap M\setminus 0_M} a_\rho \prod_{\sigma \in \partial \Delta^\circ \cap N} z_\sigma^{\langle \sigma, \rho\rangle +1} - t a_0 \prod_{\sigma \in \partial\Delta^\circ \cap N} z_\sigma = 0.\]
Thus, at least on an open set of $W$, we may write this family of hypersurfaces as a rational map from $X_{\Delta^\circ}$ to $\mathbb{P}^1$, defined by $\phi\colon [z_\sigma] \mapsto [s:t]$ where
\begin{align*}
s &= \prod_{\sigma \in (\Delta^\circ\setminus \nabla) \cap N} z_\sigma \\
t &= \Bigg(\sum_{\rho \in \Delta_1 \cap M\setminus 0_M} a_\rho \prod_{\sigma \in \partial \Delta^\circ \cap N} z_\sigma^{\langle \sigma, \rho\rangle +1}\Bigg) \Bigg/ \Bigg(\prod_{\sigma \in \partial\nabla \cap N } z_{\sigma}\Bigg) 
\end{align*}

We want to show that this map is defined on $W$ away from the base locus of $\mathcal{P}$. Note that homogeneity away from $\sum_{\sigma \in \partial \nabla \cap N}D_\sigma$ is clear, since both terms are sections of $\mathcal{L} :=\mathcal{O}_{X_{\Delta^\circ}}(\sum_{\sigma \in (\Delta^\circ \setminus \nabla) \cap N}D_\sigma)$. Now, if $\sigma \in \nabla_2$, then the numerator in the expression for $t$ above has a factor of $z_\sigma$, since $\langle \sigma, \rho \rangle \geq 0$ for all $\rho \in \Delta_1$ and $\sigma \in \nabla_2$. Thus both terms are also sections of $\calL$ along $D_\sigma$ for $\sigma \in \nabla_2$. Moreover, on the restriction of $\phi$ to $W$, we notice that the expression for $t$ can also be written as 
\[\Bigg(\sum_{\rho \in \Delta_2 \cap M} a_\rho \prod_{\sigma \in \partial \Delta^\circ \cap N} z_\sigma^{\langle \sigma, \rho\rangle +1}\Bigg) \Bigg/ \Bigg(\prod_{\sigma \in \partial\nabla \cap N } z_\sigma\Bigg).\]
and thus, for the same reason as above, both terms are sections of $\mathcal{L}$ along $D_{\sigma} \cap W$ for $\sigma \in \nabla_1$. Finally, \cite[Proposition 6.3]{cycitv} implies that $\nabla \cap N = (\nabla_1 \cap N) \cup (\nabla_2 \cap N)$, so we deduce that the expressions for $s$ and $t$ above form honest global sections of $\mathcal{L}|_W$. Thus the map $\phi$ is well-defined away from the base locus of $\mathcal{L}|_W$ and the fibre of $\phi$ over $s=0$ is as required.
\end{proof}

  It follows from the proof of Proposition \ref{prop:fibinf} that the line bundle $\mathcal{O}_W(\hat{S})$ is just $\mathcal{O}_{W}(\sum_{\sigma \in (\Delta^\circ\setminus\nabla)\cap N }D_\sigma)$. Since $W$ is an anticanonical hypersurface, the intersection of a divisor $D_\sigma$ with $W$ is empty if and only if $\sigma$ lies in the relative interior of a facet of $\Delta^\circ$. If $\sigma$ is in the interior of a codimension $2$ face of $\Delta^\circ$, then a calculation analogous to that performed in \cite[\S 3.3]{lptk3s} shows that $D_\rho \cap W$ has $1+\ell^*(\Gamma(\sigma)) \ell^*(\Gamma(\sigma)^\vee)$ irreducible components, where $\Gamma(\sigma)$ is the smallest face of $\Delta^\circ$ containing $\sigma$, $\Gamma(\sigma)^\vee$ is the face of $\Delta$ made up of points $\rho$ satisfying $\langle \sigma, \rho \rangle = -1$, and $\ell^*(\Gamma)$ denotes the number of lattice points in the relative interior of $\Gamma$.  Finally, if $\sigma$ lies in a codimension $\geq 3$ face of $\Delta^\circ$, then $D_\rho \cap W$ is irreducible for generic $W$.

\begin{proposition}
If $\sigma$ is in $(\Delta^\circ \setminus \nabla) \cap N$, then $D_\rho \cap W$ has a single irreducible component. Therefore, the member of the pencil $\calP$ corresponding to $[s:t]=[0:1]$ has 
\[\# (\Delta^\circ \setminus \nabla) \cap N\]
irreducible components.
\end{proposition}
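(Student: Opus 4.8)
The plan is to match the combinatorics of the lattice points of $\Delta^\circ\setminus\nabla$ against the trichotomy recalled just before the statement: for $\sigma\in\partial\Delta^\circ\cap N$ the intersection $D_\sigma\cap W$ is empty when $\Gamma(\sigma)$ is a facet, has $1+\ell^*(\Gamma(\sigma))\ell^*(\Gamma(\sigma)^\vee)$ irreducible components when $\Gamma(\sigma)$ has codimension $2$, and is irreducible for generic $W$ when $\Gamma(\sigma)$ has codimension $\geq 3$. (Note first that every $\sigma\in(\Delta^\circ\setminus\nabla)\cap N$ does lie on $\partial\Delta^\circ$, since the origin is the only interior lattice point of the reflexive polytope $\Delta^\circ$ and it belongs to $\nabla$.) Thus the proof reduces to deciding which faces $\Gamma(\sigma)$ can occur, and to killing the correction term in the codimension-$2$ case.

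First I would give a clean description of $(\Delta^\circ\setminus\nabla)\cap N$. Using \cite[Proposition 6.3]{cycitv}, which gives $\nabla\cap N=(\nabla_1\cap N)\cup(\nabla_2\cap N)$, together with the defining inequalities of $\nabla_1,\nabla_2$ and the fact that $\sigma\in\Delta^\circ$ forces $\langle\sigma,v\rangle\geq-1$ on all of $\Delta=\mathrm{Conv}(\Delta_1\cup\Delta_2)$, one finds that $\sigma\in(\Delta^\circ\setminus\nabla)\cap N$ exactly when $\min_{v\in\Delta_1}\langle\sigma,v\rangle=\min_{v\in\Delta_2}\langle\sigma,v\rangle=-1$. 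Since each minimum over $\Delta_i=\mathrm{Conv}(E_i\cup\{0_M\})$ is attained at a nonzero vertex, this is equivalent to the statement that the dual face $\Gamma(\sigma)^\vee=\{\rho\in\Delta:\langle\sigma,\rho\rangle=-1\}$ contains a lattice point of $E_1$ and a lattice point of $E_2$. With this translation the two easy cases fall out immediately: if $\Gamma(\sigma)$ is a facet then $\Gamma(\sigma)^\vee$ is a single vertex of $\Delta$, which lies in exactly one of the disjoint sets $E_1,E_2$, contradicting the description above, so this case never occurs (and in particular $D_\sigma\cap W\neq\emptyset$); and if $\Gamma(\sigma)$ has codimension $\geq3$ then $D_\sigma\cap W$ is already irreducible.

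I expect the remaining codimension-$2$ case to be the main obstacle, and I would handle it using the nef-partition data rather than $W$ itself. Here $\Gamma(\sigma)^\vee$ is an edge of $\Delta$ meeting both $E_1$ and $E_2$, and I must prove $\ell^*(\Gamma(\sigma)^\vee)=0$ so the count collapses to $1$. Let $\varphi_1,\varphi_2$ be the integral piecewise-linear support functions on $M_\mathbb{R}$ of the nef Cartier divisors $\sum_{\rho\in E_i}D_\rho$, so $\varphi_1+\varphi_2=\varphi_{-K}$, the support function of $-K_{X_\Delta}$, which is linear on the cone over any face of $\Delta$ and equals $1$ on $\partial\Delta\cap M$. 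On the cone over the edge $\Gamma(\sigma)^\vee$ the sum is therefore linear; since $\varphi_1=\varphi_{-K}-\varphi_2$ is simultaneously convex (as the support function of a nef divisor) and concave (being a linear function minus the convex $\varphi_2$), it is linear on this cone. Writing the lattice points of the edge as evenly spaced points $p_0,\dots,p_m$, each value $\varphi_1(p_j)$ is $1$ or $0$ according as $p_j\in E_1$ or $p_j\in E_2$ (recall $E_1\sqcup E_2=\partial\Delta\cap M$), and linearity forces $\varphi_1(p_0),\dots,\varphi_1(p_m)$ to be an arithmetic progression. Since the edge meets both $E_1$ and $E_2$ this progression is nonconstant, and a nonconstant arithmetic progression valued in $\{0,1\}$ has at most two terms; hence $m=1$ and $\ell^*(\Gamma(\sigma)^\vee)=m-1=0$.

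Combining the three cases shows that for every $\sigma\in(\Delta^\circ\setminus\nabla)\cap N$ the intersection $D_\sigma\cap W$ is a nonempty irreducible surface. The count in the final sentence then follows from Proposition \ref{prop:fibinf}, which identifies the $[0:1]$-fibre of $\calP$ with $\bigcup_\sigma(D_\sigma\cap W)$: distinct $\sigma$ give summands contained in distinct prime divisors $D_\sigma\neq D_{\sigma'}$, whose common intersection with $W$ has strictly smaller dimension, so these irreducible surfaces are pairwise distinct components and their number is exactly $\#(\Delta^\circ\setminus\nabla)\cap N$.
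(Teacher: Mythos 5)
Your proof is correct, and its skeleton coincides with the paper's: both arguments run through the trichotomy on the codimension of $\Gamma(\sigma)$, both use \cite[Proposition 6.3]{cycitv} to decompose $\nabla \cap N = (\nabla_1 \cap N) \cup (\nabla_2 \cap N)$, and both dispose of the facet case by noting that $\Gamma(\sigma)^\vee$ would then be a single vertex lying in exactly one $E_i$ (the paper phrases this as showing such a $\sigma$ lands in $\nabla_1$ or $\nabla_2$; your contrapositive via the minima $\min_{\Delta_1}\langle\sigma,\cdot\rangle = \min_{\Delta_2}\langle\sigma,\cdot\rangle = -1$ is an equivalent reformulation of the paper's observation that there exist $\rho_1 \in \Delta_1$, $\rho_2 \in \Delta_2$ pairing to $-1$ with $\sigma$). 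The genuine divergence is in the codimension-two case, which is the heart of the matter: the paper gets $\ell^*(\Gamma(\sigma)^\vee) = 0$ by observing that $\Gamma(\sigma)^\vee$ meets both $\Delta_1$ and $\Delta_2$, hence is a face of neither, and then citing \cite[Proposition 6.3]{cycitv} a second time, whereas you prove it from scratch: the support functions $\varphi_1, \varphi_2$ of the nef partition sum to the gauge function of $\Delta$, which is linear on the cone over the edge $\Gamma(\sigma)^\vee$, so convexity forces each $\varphi_i$ to be linear there; since $\varphi_1$ takes values in $\{0,1\}$ on $\partial\Delta \cap M$ according to the partition $E_1 \sqcup E_2$, its restriction to the evenly spaced lattice points of the edge is a nonconstant arithmetic progression valued in $\{0,1\}$, which has at most two terms. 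This argument is sound (your parenthetical about $\varphi_1$ being ``simultaneously convex and concave'' is loose about sign conventions, but harmless, since both $\varphi_i$ have the same convexity type and their sum is linear on the relevant cone), and it buys self-containedness: it makes visible the combinatorial reason -- the Cayley-type linearity of nef-partition support functions -- why an edge of $\Delta$ crossing the partition must have lattice length one, where the paper simply outsources this to Batyrev--Borisov. Your closing observation that distinct $\sigma$ contribute distinct components (because $D_\sigma \cap D_{\sigma'} \cap W$ has strictly smaller dimension for generic $W$) fills in a counting step that the paper leaves implicit, as does your preliminary remark that every $\sigma \in (\Delta^\circ \setminus \nabla) \cap N$ lies on $\partial\Delta^\circ$ because the origin, the unique interior lattice point, belongs to $\nabla$.
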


\begin{proof}
First, if $\sigma$ is contained on the relative interior of a facet of $\Delta^\circ$, then $\Gamma(\sigma)^\vee$ is a single vertex $\eta$ of $\Delta$. Without loss of generality, we can assume that $\eta \in \Delta_1$. Therefore, $\langle \sigma, \rho \rangle \geq -1$ for all points $\rho \in \Delta$ and $\langle \sigma, \rho \rangle = -1$ if and only if $\rho = \eta$, so, by definition, $\sigma$ is in $\nabla_1$. Since, by \cite[Proposition 6.3]{cycitv}, all points of $\nabla \cap N$ are either in $\nabla_1$ or $\nabla_2$, it follows that no point of $(\Delta^\circ \setminus \nabla) \cap N$ is in the interior of a facet of $\Delta^\circ$. Thus for any point $\sigma \in (\Delta^\circ \setminus \nabla) \cap N$, the intersection $D_{\sigma} \cap W$ is nonempty.

It just remains to treat the case where $\sigma$ lies in a codimension $2$ face of $\Delta^{\circ}$. Since $\sigma$ lies in $(\Delta^\circ \setminus \nabla)\cap N$, by definition there must be some $\rho_1 \in \Delta_1$ and $\rho_2 \in \Delta_2$ so that $\langle \rho_1 ,\sigma \rangle = \langle \rho_2 ,\sigma \rangle = -1$. Therefore, $\Gamma(\sigma)^\vee$ contains points in both $\Delta_1$ and $\Delta_2$, so is a face of neither. Given this, \cite[Proposition 6.3]{cycitv} implies that $\Gamma(\sigma)^\vee$ does not contain any points in its relative interior. So $\ell^*(\Gamma(\sigma)^\vee) = 0$ and hence $D_\sigma \cap W$ has a single irreducible component.
\end{proof}

\begin{remark}
For $W$ of arbitrary dimension, the same proof works to find the number of components of the member of the pencil $\calP$ corresponding to $[s:t]=[0:1]$. However, if $\dim(W) \geq 4$ we do not know whether this may be interpreted as a count of components of a singular fibre in a fibration on some birational model $\hat{W}$ of $W$ (see Remark \ref{rem:higherdim}).
\end{remark}

  Next we show that this number also has meaning with respect to the mirror Calabi-Yau variety $V$ and its degeneration to the union of $X_1$ and $X_2$. 

\begin{proposition}\label{prop:Cgenus}
If  $\dim(\nabla_1) = \dim(\nabla_2)= 4$, then $V \cap X_1 \cap X_2$ is an irreducible curve $C$ of genus 
\[g(C) = \# (\Delta^\circ \setminus \nabla) \cap N - 1.\]
\end{proposition}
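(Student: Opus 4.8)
The plan is to realise $C$ as an anticanonical curve on the K3 surface $Z = X_1 \cap X_2$ and compute its genus by Riemann--Roch on $Z$, reducing the count to lattice points via a short Koszul resolution.

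First I would pin down the geometry. In the hypersurface case we have $V' = X_\Delta$, so $X_1 \in |\mathcal{L}'_1|$ and $X_2 \in |\mathcal{L}'_2|$ with $\mathcal{L}'_1 \otimes \mathcal{L}'_2 = \mathcal{O}(-K_{X_\Delta})$, and $Z = X_1 \cap X_2$. The hypothesis $\dim(\nabla_1) = \dim(\nabla_2) = 4$ says precisely that the Newton polytope of each $\mathcal{L}'_i$ is full-dimensional, i.e.\ each $\mathcal{L}'_i$ is nef \emph{and big}. A Bertini argument on the base-point-free systems $|\mathcal{L}'_1|$, $|\mathcal{L}'_2|$ then shows that, for generic sections, $Z$ is a smooth irreducible surface contained in the smooth locus of $X_\Delta$; adjunction with $\mathcal{L}'_1 \otimes \mathcal{L}'_2 = \mathcal{O}(-K_{X_\Delta})$ gives $\omega_Z = \mathcal{O}_Z$ and $h^1(\mathcal{O}_Z)=0$, so $Z$ is a K3 surface. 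The curve $C = V \cap Z$ is a generic member of $|(-K_{X_\Delta})|_Z|$, which is again nef and big on $Z$, so a second Bertini argument makes $C$ smooth and irreducible. Establishing this smoothness and irreducibility in the (possibly singular) toric setting is the step I expect to demand the most care, and it is exactly where the full-dimensionality hypothesis is essential.

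With $C$ a smooth irreducible curve on the K3 surface $Z$, adjunction gives $2g(C)-2 = C^2$, and Riemann--Roch on $Z$ gives $\chi(\mathcal{O}_Z(C)) = \tfrac12 C^2 + 2 = g(C)+1$. Thus $g(C) = \chi\bigl(Z,(-K_{X_\Delta})|_Z\bigr) - 1$, and it remains to compute this Euler characteristic. I would use the Koszul resolution of $\mathcal{O}_Z$ in $X_\Delta$ twisted by $-K_{X_\Delta}$; using $\mathcal{L}'_1 \otimes \mathcal{L}'_2 = \mathcal{O}(-K_{X_\Delta})$ this reads
\[ 0 \to \mathcal{O}_{X_\Delta} \to \mathcal{L}'_1 \oplus \mathcal{L}'_2 \to \mathcal{O}_{X_\Delta}(-K_{X_\Delta}) \to (-K_{X_\Delta})|_Z \to 0. \]
Crucially, every sheaf other than $(-K_{X_\Delta})|_Z$ is nef, so Demazure vanishing and the standard Batyrev--Borisov dictionary give $\chi(\mathcal{L}'_i) = \#(\nabla_i \cap N)$, $\chi(\mathcal{O}_{X_\Delta}(-K_{X_\Delta})) = \#(\Delta^\circ \cap N)$ and $\chi(\mathcal{O}_{X_\Delta}) = 1$. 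Additivity of the Euler characteristic then yields
\[ \chi\bigl(Z,(-K_{X_\Delta})|_Z\bigr) = \#(\Delta^\circ \cap N) - \#(\nabla_1 \cap N) - \#(\nabla_2 \cap N) + 1. \]

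Finally I would carry out the combinatorial simplification. Reflexivity of $\Delta$ forces $\nabla_1 \cap \nabla_2 = \{0\}$: any $u$ in the intersection satisfies $\langle u, v\rangle \geq 0$ for all $v \in F_1 \cup F_2 = \partial\Delta \cap M$, and since $0$ lies in the interior of $\Delta$ this forces $u=0$. Combined with \cite[Proposition 6.3]{cycitv}, which gives $\nabla \cap N = (\nabla_1 \cap N) \cup (\nabla_2 \cap N)$, inclusion--exclusion yields $\#(\nabla_1 \cap N) + \#(\nabla_2 \cap N) = \#(\nabla \cap N) + 1$. Substituting into the expression above gives
\[ g(C) = \#(\Delta^\circ \cap N) - \#(\nabla \cap N) - 1 = \#\bigl((\Delta^\circ \setminus \nabla) \cap N\bigr) - 1, \]
as claimed. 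As a sanity check, for the quintic in $\mathbb{P}^4$ with the partition $\mathcal{L}'_1 = \mathcal{O}(2)$, $\mathcal{L}'_2 = \mathcal{O}(3)$ one gets $126 - 49 - 1 = 76$, the genus of a $(2,3,5)$ complete intersection curve.
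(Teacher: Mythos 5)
Your argument is correct, and it takes a genuinely different route from the paper's. The paper works entirely cohomologically: it resolves $\calO_C$ by the Koszul complex of the three bundles $\omega_{X_\Delta}^{-1}$, $\calL_1$, $\calL_2$, runs the second hypercohomology spectral sequence, and extracts both irreducibility (from $h^0(\calO_C)=1$) and the genus from the $h^4$'s of the negative twists via \cite[Theorem 2.5]{cycitv}, landing on $g(C)=\ell^*(2\Delta^\circ)-\ell^*(\nabla_1+\Delta^\circ)-\ell^*(\nabla_2+\Delta^\circ)$; a separate combinatorial lemma, $\ell^*(Q+\Delta^\circ)=\ell(Q)$, then converts these interior-point counts of Minkowski sums into lattice-point counts. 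You instead push everything onto the K3 surface $Z$: adjunction and Riemann--Roch give $g(C)=\chi\bigl(Z,(-K_{X_\Delta})|_Z\bigr)-1$, and the twisted Koszul sequence plus Demazure vanishing identifies this Euler characteristic with $\ell(\Delta^\circ)-\ell(\nabla_1)-\ell(\nabla_2)+1$ directly as a count of sections --- no spectral sequence and no Minkowski-sum lemma. The combinatorial endgame is shared (\cite[Proposition 6.3]{cycitv}), but your explicit verification that $\nabla_1\cap\nabla_2=\{0_N\}$ is cleaner than the paper's over-counting remark; indeed your bookkeeping is consistent where the paper's final display has a sign slip (it reads $\ell(\Delta^\circ)-\ell(\nabla)+1$ where $-1$ is needed to match the statement), and your formula reproduces the paper's own examples ($12$ for $(\PP^1)^3$, $18$ for degree two) as well as your quintic check.

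The price of your route is that irreducibility and smoothness of $Z$ and $C$ --- which the paper gets essentially for free from $h^0(\calO_C)=1$ --- must be earned geometrically, and you rightly flag the Bertini step as where $\dim\nabla_i=4$ (i.e.\ bigness) enters. Two details there deserve patching. First, $h^1(\calO_Z)=0$ does \emph{not} follow from adjunction: you need the Koszul resolution of $\calO_Z$ together with the vanishings $H^2(X_\Delta,\calL_i^{-1})=H^3(X_\Delta,\omega_{X_\Delta})=0$ (or a citation that the Batyrev--Borisov complete intersection is a K3); this is not cosmetic, since $\chi(\calO_Z)=2$ enters your Riemann--Roch. Second, for $Z$ to avoid $\mathrm{Sing}(X_\Delta)$ you should invoke that Gorenstein terminal toric fourfold singularities have codimension at least $3$, so two generic base-point-free divisors miss them, and irreducibility of $Z$ and $C$ requires nef-and-bigness of the relevant restrictions, e.g.\ $(\calL_2')^3\cdot\calL_1'>0$ via Khovanskii--Teissier. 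In exchange, your proof makes the answer visibly a section count and avoids spectral sequences; the paper's version is more robust, tolerating quasi-smooth $C$ in a singular ambient space, and, per the remark following the proposition, it adapts verbatim to all dimensions to compute $h^{d-2,0}(V\cap X_1\cap X_2)$, whereas your K3-specific Riemann--Roch step does not generalize as directly.
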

\begin{proof}
By construction, $C$ is a complete intersection of sections of the line bundles $\omega_{X_\Delta}^{-1}$ (which determines $V$), $\mathcal{L}_1 = \mathcal{O}_{X_\Delta}(\sum_{\rho \in \Delta_1 \cap N}D_\rho)$ (which determines $X_1$), and $\mathcal{L}_2 = \mathcal{O}_{X_\Delta}(\sum_{\rho \in \Delta_2 \cap N}D_\rho)$ (which determines $X_2$). The Koszul complex resolving $\mathcal{O}_C$ is thus given by
\[\omega_{X_\Delta}^2 \rightarrow (\mathcal{L}^{-1}_1\otimes \omega_{X_\Delta}) \oplus (\mathcal{L}^{-1}_2\otimes \omega_{X_\Delta}) \oplus \omega_{X_\Delta} \rightarrow \mathcal{L}^{-1}_1 \oplus \mathcal{L}^{-1}_2 \oplus \omega_{X_\Delta} \rightarrow \mathcal{O}_{X_\Delta}\]

The corresponding second spectral sequence converges to $ H^i(C,\mathcal{O}_C[3])$, so
\[\bigoplus_{p+q = i+3} {''E}_\infty^{p,q}  \cong  H^i(C,\mathcal{O}_C).\]
The relevant portion of $''E_1^{p,q}$ is given by
\begin{equation*}
\begin{array}{cccccl}
 H^4(\omega_{X_\Delta}^2 ) & \rightarrow &  H^4(\mathcal{L}^{-1}_1\otimes \omega_{X_\Delta}) \oplus  H^4(\mathcal{L}^{-1}_2\otimes \omega_{X_\Delta}) \oplus  H^4(\omega_{X_\Delta}) & \rightarrow &\mathbb{C} &\rightarrow\ 0\\
0& \rightarrow& 0 &\rightarrow &0 &\rightarrow\ 0\\
0& \rightarrow& 0 &\rightarrow &0 &\rightarrow\ 0\\
0& \rightarrow& 0 &\rightarrow &0 &\rightarrow\ 0\\
0& \rightarrow& 0 &\rightarrow &0 &\rightarrow\ \mathbb{C}
\end{array}
\end{equation*}

Now, by \cite[Theorem 2.5]{cycitv}, we know that 
\begin{align*}
h^4(\omega_{X_\Delta}^2) &= {\ell^*(2\Delta^\circ)},\\ 
h^4(\omega_{X_\Delta}) &= 1 \\ 
h^4(\mathcal{L}^{-1}_1\otimes \omega_{X_\Delta}) &= \ell^*(\nabla_1 + \Delta^\circ), \\  h^4(\mathcal{L}^{-1}_2\otimes \omega_{X_\Delta}) &= \ell^*(\nabla_2 + \Delta^\circ).
\end{align*}
It is not then hard to see that this spectral sequence degenerates at the $''E_2$ term and $h^0(\mathcal{O}_C) = 1$, hence $C$ is irreducible. Since $h^i(\mathcal{O}_C) = 0$ for $i > 1$, we have that the top row of $''E_1^{p,q}$ above is exact except at the left-most term. Thus we can compute that 
\[g(C) = \ell^*(2\Delta^\circ) - (\ell^*(\nabla_1 + \Delta^\circ) + \ell^*(\nabla_2 + \Delta^\circ)).\]

It remains to show that this is precisely the number of points in $(\Delta^\circ \setminus \nabla)\cap N$. For this we need a small lemma.

\begin{lemma}
If  $Q$ is either $\nabla_i$ or $\Delta^\circ$, the number $\ell^*(Q + \Delta^\circ)$ is equal to $\ell(Q)$, where $\ell(Q)$ denotes the number of lattice points in $Q$.
\end{lemma}

\begin{proof}
The polytope $\Delta^\circ$ is defined by the inequalities $\langle \sigma,\rho \rangle \geq -1$ for all points $\rho \in \Delta$. Similarly, $\nabla_1$ is defined by the inequalities $\langle\sigma,\rho \rangle \geq -1$ for all points $\rho \in \Delta_1$ and $\langle \sigma,\rho \rangle \geq 0$ for all points $\rho \in \Delta_2$. We shall prove the lemma for $Q = \nabla_1$; the other cases are analogous. 

Now, the polytope $\nabla_1 + \Delta^\circ$ is defined by the inequalities $\langle \sigma, \rho \rangle \geq -2$ for $\rho \in \Delta_1$ and $\langle \sigma, \rho \rangle \geq -1$ for $\rho \in \Delta_2$. Therefore a point in the interior of $\nabla_1 + \Delta^\circ$ satisfies these inequalities strictly, and thus any lattice point  in the interior of $(\nabla_1 + \Delta^\circ)$ has $\langle \sigma, \rho \rangle \geq 0$ for all $\rho \in \Delta_2$ and $\langle \sigma, \rho \rangle \geq -1$ for all $\rho \in \Delta_1$. But this is just the set of all lattice points in $\nabla_1$.
\end{proof}
 
From this lemma, we see that 
\[\ell^*(2\Delta^\circ) - (\ell^*(\nabla_1 + \Delta^\circ) + \ell^*(\nabla_2 + \Delta^\circ)) = \ell(\Delta^\circ) - \ell(\nabla_1) - \ell(\nabla_1).\]
Moreover, \cite[Proposition 6.3]{cycitv} shows that all lattice points of $\nabla$ are lattice points of either $\nabla_1$ or $\nabla_2$, so this is equal to
\[\ell(\Delta^\circ) - \ell(\nabla) + 1 = \# (\Delta^\circ \setminus \nabla) \cap N+1;\]
here the extra $(+1)$ term corresponds to the fact that we have over-counted the origin, which is the intersection of $\nabla_1$ and $\nabla_2$. This completes the proof of Proposition \ref{prop:Cgenus}.
\end{proof}

\begin{remark}
A very minor modification of this proof shows that, in the case where $V$ has dimension $d \geq 3$, we have $h^{d-2,0}(V \cap X_1 \cap X_2) = \# (\Delta^\circ \setminus \nabla) \cap N -1$. If $\dim(V) = 2$, then $h^{0}(V \cap X_1 \cap X_2) = \# (\Delta^\circ \setminus \nabla) \cap N$.
\end{remark}

Putting everything together, we obtain the following theorem.

\begin{theorem}
If $\dim(V) = \dim(W) = d \geq 3$, then the member of the pencil of hypersurfaces $\mathcal{P}$ corresponding to $[s:t]=[0:1]$ has exactly $h^{d-2,0}(V \cap X_1\cap X_2) +1$ components. If $\dim(V) = \dim(W) = 2$, then $V \cap X_1\cap X_2$ is a set of points and the member of the pencil of hypersurfaces $\mathcal{P}$ corresponding to $[s:t]=[0:1]$ has exactly $\#(V \cap X_1\cap X_2)$ components.
\end{theorem}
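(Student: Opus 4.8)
The plan is to assemble the theorem directly from the two preceding component-count results together with the Hodge-number identity recorded in the Remark following Proposition \ref{prop:Cgenus}; no new geometric input is required, since the statement is essentially the synthesis flagged by the phrase ``putting everything together.'' The entire task is to reconcile the dimension conventions and substitute one count into another.

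First I would treat the case $d \geq 3$. By the proposition immediately preceding this theorem, the member of $\mathcal{P}$ over $[s:t] = [0:1]$ decomposes as $\bigcup_{\sigma \in (\Delta^\circ \setminus \nabla) \cap N}(D_\sigma \cap W)$ with each $D_\sigma \cap W$ irreducible, so the number of components is exactly $\#(\Delta^\circ \setminus \nabla) \cap N$. The remark after Proposition \ref{prop:Cgenus} records that for $\dim(V) = d \geq 3$ one has $h^{d-2,0}(V \cap X_1 \cap X_2) = \#(\Delta^\circ \setminus \nabla) \cap N - 1$. Substituting this into the component count gives $h^{d-2,0}(V \cap X_1 \cap X_2) + 1$, as claimed.

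Next I would handle $d = 2$. Here $V$, $X_1$, and $X_2$ are surfaces, so $V \cap X_1 \cap X_2$ is zero-dimensional, hence a finite set of (generically reduced) points whose only nontrivial cohomology is $H^0$; its rank is the cardinality, giving $h^0(V \cap X_1 \cap X_2) = \#(V \cap X_1 \cap X_2)$. The same component-count proposition still applies, as noted in the remark permitting $W$ of arbitrary dimension, yielding $\#(\Delta^\circ \setminus \nabla) \cap N$ components, while the remark after Proposition \ref{prop:Cgenus} gives $h^0(V \cap X_1 \cap X_2) = \#(\Delta^\circ \setminus \nabla) \cap N$ in this dimension. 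Chaining these equalities identifies the component count with $\#(V \cap X_1 \cap X_2)$.

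The one step requiring genuine care is not any single computation but the dimension bookkeeping underlying the cited Hodge identity: Proposition \ref{prop:Cgenus} is proved under the full-dimensionality hypothesis $\dim(\nabla_1) = \dim(\nabla_2) = d+1$ natural to the threefold setting, and I would want to confirm that the ``minor modification'' promised in its remark does produce $h^{d-2,0}(V \cap X_1 \cap X_2)$ for all $d \geq 3$. Concretely, the Koszul complex resolving $\mathcal{O}_{V \cap X_1 \cap X_2}$ inside the $(d+1)$-dimensional $X_\Delta$ computes $H^{d-2}(V \cap X_1 \cap X_2, \mathcal{O})$, which by Serre duality equals $h^{d-2,0}$; the check is that the associated second spectral sequence still degenerates at the $E_2$ stage and isolates this top term via the same lattice-point count as in the genus computation, with no contributions mixing in from lower Hodge pieces. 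Once that identity is granted, the theorem follows purely by substitution.
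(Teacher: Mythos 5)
Your proposal is correct and takes essentially the same route as the paper, which offers no separate proof beyond the phrase ``putting everything together'': it simply combines the component count $\#\left((\Delta^\circ \setminus \nabla) \cap N\right)$ from the proposition immediately preceding the theorem with the identity $h^{d-2,0}(V \cap X_1 \cap X_2) = \#\left((\Delta^\circ \setminus \nabla) \cap N\right) - 1$ (resp.\ $h^0 = \#\left((\Delta^\circ \setminus \nabla) \cap N\right)$ for $d=2$) from the remark following Proposition \ref{prop:Cgenus}, exactly as you do. Your extra check on the dimension bookkeeping in the ``very minor modification'' of the Koszul/spectral-sequence argument is sensible diligence but does not constitute a different approach.
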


Next we analyze the rest of the members of the pencil $\calP$ on $W$. Our goal is to show that the members corresponding to $[1:0]$ and $[1:1]$ are essentially the singular fibres of the LG models of $X_1$ and $X_2$. Thus there is a very real sense in which the pencil $\calP$ on ${W}$ is collecting information about the LG models of $X_1$ and $X_2$. 

First, however, we describe how these LG models are constructed. In \cite{harderthesis}, it is shown that the na\"ive compactification of Givental's \cite{mttci} Landau-Ginzburg model for a complete intersection $X$ in a toric variety $X_\Delta$ is smooth if $X$ has dimension less than or equal to $3$, and otherwise has only mild singularities.

This compactification is defined as follows. Assume that we have a polytope $\Delta$ and a nef partition $\Delta_1,\Delta_2$ of $\Delta$, so that $\Delta_1$ and $\Delta_2$ contain no interior points. In this setting, a general enough global section of the line bundle $\mathcal{L}_1$ associated to $\Delta_1$ determines a quasi-Fano hypersurface $X$ in $X_\Delta$. The compactified version of Givental's LG model for $X$ is then the complete intersection $Y \subset X_\nabla \times \mathbb{A}^1$ cut out by the equations
\begin{align*}\label{eq:penc}
\sum_{\rho \in \Delta_{1} \cap M} a_\rho \prod_{\sigma \in \nabla\cap N} z_\sigma^{\langle\sigma,\rho \rangle - \sigma^1_\mathrm{min}} &=0  \\
ta_0\prod_{\sigma \in \nabla_{2}\cap N\setminus 0_N}z_\sigma - \sum_{\rho \in \Delta_{2}\cap M \setminus 0_M} a_\rho \prod_{\sigma \in \partial\nabla\cap N} z_\sigma^{\langle\sigma,\rho \rangle - \sigma^{2}_\mathrm{min}}&=0 
\end{align*}
where $a_\rho$ are complex constants, $t$ is the coordinate on $\mathbb{A}^1$, and $\sigma^i_\mathrm{min}$ is $-1$ if $\sigma$ is in $\nabla_i$ and $0$ otherwise. The superpotential $\mathsf{w}$ is just projection of this complete intersection onto $\mathbb{A}^1$. It is shown in \cite{harderthesis} that $(Y,\mathsf{w})$ has the expected properties for an LG model of $X$.

With this in place, we find:

\begin{theorem}\label{thm:fib-bir}
The members of the pencil $\calP$ corresponding to $[s:t] = [1:1]$ and $[1:0]$ are birational to the fibres over $0$ of the LG models $(Y_1,\mathsf{w}_1)$ and $(Y_2,\mathsf{w}_2)$ of $X_1$ and $X_2$ respectively. In fact, for any choice of ${W}$ and hypersurface $\mathcal{P}([1:t])$ with $t\in \CC$, there is a choice of LG model $(Y,\mathsf{w})$ of either $X_1$ or $X_2$ so that $\mathcal{P}([1:t])$ is birational to a fibre of $(Y,\mathsf{w})$.
\end{theorem}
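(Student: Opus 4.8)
The plan is to restrict every variety in sight to the dense algebraic torus and compare the resulting systems of Laurent-polynomial equations. Since birational type is detected on any dense open set, this reduces the theorem to an elementary identification of affine complete intersections, and the combinatorial content of the statement is isolated entirely in how the exponent shifts $\sigma^i_{\mathrm{min}}$ of the LG model interact with the superpotential.

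First I would fix the common torus. Both $X_{\Delta^\circ}$ and $X_\nabla$ are toric varieties whose fans lie in $N_\RR$ (with rays among $\partial\Delta^\circ\cap N$ and $\partial\nabla\cap N$ respectively), so both share the dense torus $T=\operatorname{Spec}\CC[M]$, on which a character $\rho\in M$ is written $x^\rho$. Because $\nabla\subseteq\Delta^\circ$, the identity on $T$ furnishes a birational map $X_{\Delta^\circ}\dashrightarrow X_\nabla$, so a member $\calP([1:t])\subset W\subset X_{\Delta^\circ}$ is birational to a fibre of an LG model exactly when their intersections with $T$ coincide. For generic $t$ the relevant members and fibres are irreducible with dense torus part, so nothing is lost in passing to $T$.

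Next I would compute both sides on $T$. Writing $F_i=\sum_{\rho\in\Delta_i\cap M\setminus 0_M}a_\rho x^\rho$ and dividing the two displayed equations of the pencil (valid away from $[0:1]$) by the common Cox monomial $\prod_{\sigma}z_\sigma$, the member $\calP([1:t])$ becomes the affine complete intersection $\{F_1=ta_0\}\cap\{F_2=(1-t)a_0\}$ in $T$ (up to the harmless sign of the generic constant $a_0$). For the LG model $(Y_1,\mathsf{w}_1)$, the exponent shifts contribute the $\rho$-independent Cox monomials $\prod_{\sigma\in\nabla_1\cap N}z_\sigma$ and $\prod_{\sigma\in\nabla_2\cap N}z_\sigma$ to the two defining equations; these are units on $T$, and after dividing them out the fibre of $\mathsf{w}_1$ over $\lambda$ becomes $\{F_1=-a_0\}\cap\{F_2=\lambda a_0\}$. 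The crucial structural point is that the $\Delta_1$-equation is independent of $\lambda$, since the superpotential coordinate appears only in the second defining equation; symmetrically the fibre of $(Y_2,\mathsf{w}_2)$ over $\lambda$ is $\{F_2=-a_0\}\cap\{F_1=\lambda a_0\}$. Here I would invoke \cite[Proposition 6.3]{cycitv}, namely $\nabla\cap N=(\nabla_1\cap N)\cup(\nabla_2\cap N)$, both to identify $\prod_{\sigma\in\nabla\cap N}z_\sigma^{\langle\sigma,\rho\rangle}$ with the character $x^\rho$ and to ensure no spurious factors intervene.

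The matching is then immediate for the first assertion: taking $\lambda=0$ shows that $\calP([1:1])\cap T=\{F_1=a_0,\,F_2=0\}$ is the torus part of the fibre of $\mathsf{w}_1$ over $0$, and $\calP([1:0])\cap T=\{F_1=0,\,F_2=a_0\}$ that of $\mathsf{w}_2$ over $0$. For the refined ``in fact'' statement, observe that $ta_0+(1-t)a_0=a_0\neq 0$, so at least one of the constants $ta_0,\,(1-t)a_0$ is nonzero for every $t\in\CC$. If $ta_0\neq 0$, I would choose the defining section of $X_1$ so that the constant term in the $\Delta_1$-equation of its LG model makes the $\lambda$-independent value of $F_1$ equal to $ta_0$, and then select $\lambda$ with $\lambda a_0'=(1-t)a_0$; the resulting fibre of this LG model of $X_1$ has the same torus part as $\calP([1:t])$. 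When instead $(1-t)a_0\neq 0$ one proceeds identically with $X_2$. This last step is where the genuine work lies: one must pin down precisely what freedom ``a choice of LG model'' allows (the constant term fixing the quasi-Fano together with the superpotential value), check that the two affine constants can always be realised simultaneously while keeping the non-origin coefficients fixed, and confirm that passage to $T$ faithfully captures birational type despite components and base loci on the toric boundary. The monomial-factor bookkeeping of the previous step and the comparison across the two distinct toric compactifications $X_{\Delta^\circ}$ and $X_\nabla$ are the main technical obstacles.
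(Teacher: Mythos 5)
Your torus computation is correct and matches the first half of the paper's proof: after clearing the common Cox monomials, $\calP([1:t])$ and the LG fibres are given by the same pair of Laurent equations, and the identification $\prod_\sigma z_\sigma^{\langle\sigma,\rho\rangle}=x^\rho$ together with \cite[Proposition 6.3]{cycitv} is exactly how the paper matches the defining equations (the paper phrases this as the observation that $\calP([1:1])$ is cut out by the same complete intersection as the fibre of $(Y_1,\mathsf{w}_1)$ over $0$, ``except compactified in $X_{\Delta^\circ}$ instead of $X_\nabla$''). But there is a genuine gap in your reduction step. You assert that ``birational type is detected on any dense open set'' and that ``for generic $t$ the relevant members and fibres are irreducible with dense torus part, so nothing is lost in passing to $T$.'' The main assertion of the theorem, however, concerns precisely the non-generic members $[1:1]$ and $[1:0]$, and these are \emph{reducible}: by Theorem \ref{thm:andrewsthm} the fibre of $(Y_i,\mathsf{w}_i)$ over $0$ has $h^{2,1}(X_i)+1$ irreducible components (e.g.\ $31$ for the quartic threefold), while its intersection with the dense torus is a single generic complete intersection $\{F_1=-a_0\}\cap\{F_2=0\}$. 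All the extra components lie in the toric boundary, so intersecting with $T$ discards them. Two reducible varieties with isomorphic dense-torus parts need not be birational, so matching the torus parts does not prove the statement; the components in the boundary divisors $D_\sigma$, $\sigma\in\partial\nabla\cap N$, must be tracked through the comparison of the two compactifications.

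This is exactly where the real content of the paper's proof lies, and it is the step you defer in your last sentence rather than carry out. The paper constructs the birational map $\varphi\colon X_{\Delta^\circ}\dashrightarrow X_\nabla$ which, away from torus-invariant loci of codimension $\geq 2$, removes only the divisors $D_\sigma$ for $\sigma\in(\Delta^\circ\setminus\nabla)\cap N$ --- so boundary components of the complete intersections lying over $\partial\nabla\cap N$ survive --- and then verifies two claims: no component of $\calP([1:1])$ is contained in a torus-invariant locus of codimension $\geq 2$ (such a component would be the closure of a torus-invariant subvariety, which $W$ does not contain), and no component is contained in $D_\sigma$ for $\sigma\in(\Delta^\circ\setminus\nabla)\cap N$ (because for such $\sigma$ the intersection $D_\sigma\cap W$ lies entirely in the member $\calP([0:1])$, so $\calP([1:1])$ meets $D_\sigma$ only in codimension $2$). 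Without these two verifications your argument establishes only that the torus parts agree, which is strictly weaker than birationality of the full reducible members. Your treatment of the ``in fact'' clause (rescaling the constant term and the superpotential value, using that $ta_0$ and $(1-t)a_0$ cannot both vanish) is in the right spirit and close to the paper's ``identical argument'' for general $p\neq[0:1]$, but it inherits the same gap.
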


\begin{proof}
Recall from Section \ref{subsec:mirrorpencils} that we have an expression for a birational model of $W$ as the complete intersection in $\mathbb{P}^1[s,t] \times X_\Delta^{\circ}$ given by the vanishing of
\begin{align*}
f_1 & := s \sum_{\rho \in \Delta_1 \cap M\setminus 0_M} a_\rho \prod_{\sigma \in \partial \Delta^\circ \cap N} z_\sigma^{\langle \sigma, \rho\rangle +1} - ta_0 \prod_{\sigma \in \partial\Delta^\circ \cap N} z_\sigma \\ f_2 &:= s \sum_{\rho \in \Delta_2 \cap M\setminus 0_M} a_\rho \prod_{\sigma \in \partial \Delta^\circ \cap N} z_\sigma^{\langle \sigma, \rho\rangle +1} - (s-t)a_0\prod_{\sigma \in \partial\Delta^\circ \cap N} z_\sigma.
\end{align*}
Note that $f_1$ has a factor of $\prod_{\sigma \in \nabla_2 \cap N \setminus 0_N} z_\rho$ by the definition of $\nabla_2$, and an analogous statement holds for $f_2$, by the definition of $\nabla_2$. 

Now if we let $[s:t]=[1:1]$, then we obtain the complete intersection of
\begin{align*}
f_1|_{[1:1]} & =\sum_{\rho \in \Delta_1 \cap M} a_\rho \prod_{\sigma \in \partial \Delta^\circ \cap N} z_\sigma^{\langle \sigma, \rho\rangle +1}  \\ 
f_2|_{[1:1]} &=  \sum_{\rho \in \Delta_2 \cap M\setminus 0_M} a_\rho \prod_{\sigma \in \partial \Delta^\circ \cap N} z_\sigma^{\langle \sigma, \rho\rangle +1}.
\end{align*}
Note that this is precisely the complete intersection determining the fibre over $0$ of the LG model of $X_1$, except compactified in $X_{\Delta^\circ}$ instead of $X_\nabla$. 

To compare these compactifications, define a birational map $\varphi$ from $X_{\Delta^\circ}$ to $X_\nabla$, which sends $z_\sigma$ to $z_\sigma$ if $\sigma \in \partial \nabla \cap N$. The restriction of $\varphi$ to the complement of the torus invariant loci of codimension $\geq 2$ simply has the effect of removing those codimension $1$ tori corresponding to points $\sigma \in (\Delta^\circ \setminus \nabla) \cap N$. Thus we see that $\varphi$ induces a birational map between $\calP([1:1])$ and the fibre over $0$ of $(Y_1,\mathsf{w}_1)$ if no components of $\mathcal{P}([1:1])$ are contained in torus invariant loci of $X_{\Delta^\circ}$ of codimension $\geq 2$, and no component of $\mathcal{P}([1:1])$ is contained in a divisor $D_\sigma$ for $\sigma \in (\Delta^\circ\setminus \nabla) \cap N$. 

The first of these two claims is trivial: since each component of $\mathcal{P}([1:1])$ is of codimension $2$ in $X_{\Delta^\circ}$, it is contained in a codimension $\geq 2$ torus invariant subvariety of $X_{\Delta^\circ}$ if and only if it is the closure of such a torus invariant subvariety. Since $W$ contains no torus invariant subvarieties of $X_{\Delta^\circ}$, this cannot happen. The second claim follows from the fact that if $\sigma \in (\Delta^\circ \setminus \nabla) \cap N$, then $D_\sigma \cap W$ is in $\mathcal{P}([0:1])$. Thus $\mathcal{P}([1:1])$ can only intersect $D_\sigma$ in at most a codimension $2$ subvariety of $X_{\Delta^\circ}$. 

An identical argument suffices to show that $\mathcal{P}([1:0])$ is birational to the fibre over $0$ of $(Y_2,\mathsf{w}_2)$, and in fact this shows that for any $p \in \mathbb{P}^1\setminus \{[0:1]\}$, for a generic choice of $W$ there is  a choice of $i \in \{1,2\}$ and an LG model $(Y_i,\mathsf{w}_i)$ so that $\mathcal{P}(p)$ is birational to a fibre of $(Y_i,\mathsf{w}_i)$.
\end{proof}

As a philosophical remark, this proves that all of the interesting data surrounding the pencil $\calP$ on ${W}$ is related to either the LG models of $X_1$ and $X_2$, or the variety $V \cap X_1 \cap X_2$. Indeed, this is the same data as was used to determine the Tyurin degeneration of $V$: $X_1$ and $X_2$ were the quasi-Fano hypersurfaces in $X_{\Delta}$, and $V \cap X_1 \cap X_2$ was the locus that needed to be blown up in $X_1$ to obtain a smooth degeneration. 

In the case where both $X_1$ and $X_2$ are pullbacks to $X_\Delta$ of ample hypersurfaces in $\mathbb{P}_\Delta$ along the mpcp resolution map, then we can say even more using the following theorem, proved in \cite{harderthesis}.

\begin{theorem}\cite{harderthesis}\label{thm:andrewsthm} With notation as above, suppose in addition that $X_i$ is the pull-back to $X_{\Delta}$ of an ample hypersurface in $\PP_{\Delta}$ and that $\dim{\Delta} = d \geq 4$. Let $(Y_i,\mathsf{w}_i)$ denote the LG model of $X_i$ and let $\rho_0$ denote the number of components in its singular fibre over $0$. Then
\[\rho_0 = h^{d-2,1}(X_i)+1.\]
\end{theorem}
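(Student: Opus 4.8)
The plan is to compute both sides as the rank of one and the same cohomology group. The starting point is the relation \eqref{eqn:LG}, which in the present toric setting is one of the outputs of \cite{harderthesis}, applied to $X = X_i$ (whose dimension is $d-1$) in cohomological degree $2$; here its internal dimension parameter must be taken to be $\dim X_i = d-1$. Because $X_i$ is quasi-Fano we have $h^{p,0}(X_i) = 0$ for $p>0$, and combined with Serre duality on the $(d-1)$-fold $X_i$ and the dimension bound, every term of $\sum_j h^{(d-1)-2+j,\,j}(X_i)$ vanishes except the one with $j=1$ (this is where $d \geq 4$ enters). This leaves
\[ h^2(Y_i, \mathsf{w}_i^{-1}(t)) = h^{d-2,1}(X_i), \]
so it suffices to prove that the fibre of $\mathsf{w}_i$ over $0$ has exactly $h^2(Y_i, \mathsf{w}_i^{-1}(t)) + 1$ irreducible components.

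To interpret this relative cohomology group as a count of fibre components, write $S = \mathsf{w}_i^{-1}(t)$ for a smooth fibre and use the long exact sequence of the pair $(Y_i, S)$. Both $h^1(Y_i) = 0$ (by the definition of an LG model) and $h^1(S) = 0$ (as $S$ is a Calabi-Yau $(d-2)$-fold), so $H^2(Y_i, S; \CC) \cong \ker\big(H^2(Y_i;\CC) \to H^2(S;\CC)\big)$. Since $Y_i$ is a quasi-smooth complete intersection in a toric variety, a Lefschetz-type argument identifies $H^2(Y_i)$ with a space spanned by restrictions of toric divisor classes, and the kernel above with the subspace of \emph{vertical} classes, that is, those supported on components of reducible fibres of $\mathsf{w}_i$.

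Next I would pin down the reducible fibres. From the explicit equations of the compactified Givental model, the fibre over $0$ is obtained by dropping the term $t a_0 \prod_\sigma z_\sigma$, so its second equation has no constant term and hence factors through the toric boundary of $X_\nabla$; for generic $t \neq 0$ the fibre is irreducible, and $\infty$ is excluded because $\mathsf{w}_i$ is proper onto $\CC$. Thus the only reducible fibre is the one over $0$, say with components $F_1, \dots, F_{\rho_0}$. Their classes lie in the vertical subspace and satisfy the single relation $\sum_k [F_k] = [\mathsf{w}_i^{-1}(0)] = \mathsf{w}_i^*[\mathrm{pt}] = 0$, the last equality because $H^2(\CC) = 0$. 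Granting that there are no further relations, the vertical subspace has rank $\rho_0 - 1$, and combining this with the two displayed identities yields $\rho_0 = h^{d-2,1}(X_i) + 1$.

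The main obstacle is precisely the independence statement: that the components of the fibre over $0$ span the whole kernel and satisfy no relation beyond $\sum_k[F_k]=0$. I expect this to be established not by pure Hodge theory but through a direct combinatorial description of the components, exactly parallel to the proof of Proposition \ref{prop:Cgenus}: each component is an intersection $D_\sigma \cap \{g_1 = 0\}$, where $g_1$ is the first of the two equations defining the LG model and $\sigma$ ranges over an explicit subset of $\partial\nabla \cap N$, together with one residual component, so that $\rho_0 = 1 + \#\{\text{such } \sigma\}$; meanwhile a Koszul/spectral-sequence computation of $h^{d-2,1}(X_i)$ expresses that Hodge number as the same lattice-point count. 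Matching the two counts uses the combinatorial identities of \cite{cycitv} (notably the decomposition $\nabla \cap N = (\nabla_1 \cap N) \cup (\nabla_2 \cap N)$) in the same manner as the lemma internal to Proposition \ref{prop:Cgenus}. Verifying \eqref{eqn:LG} in this toric case and the Lefschetz generation of $H^2(Y_i)$ by toric divisors are the remaining technical inputs.
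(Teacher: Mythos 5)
The paper contains no proof of this statement: it is quoted from \cite{harderthesis}, so your attempt can only be checked against the surrounding machinery. Your first reduction is sound and matches what the paper itself does in Section \ref{sec:threefold}: applying Equation \eqref{eqn:LG} with the dimension parameter $\dim X_i = d-1$, the quasi-Fano vanishing $h^{p,0}(X_i) = 0$, Serre duality, and $d \geq 4$ does give $h^2(Y_i,\mathsf{w}_i^{-1}(t)) = h^{d-2,1}(X_i)$, and the long exact sequence of the pair, with $h^1(Y_i) = h^1(S) = 0$, identifies this with $\ker\left(H^2(Y_i;\CC) \to H^2(S;\CC)\right)$.

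The genuine gap is in the second half. To get $\rho_0 - 1$ from that kernel you need two statements you do not establish: (a) that the kernel is spanned by classes of components of \emph{reducible} fibres, subject only to the per-fibre relations, and (b) that every fibre of $\mathsf{w}_i$ away from $0$ is irreducible. For (b) your argument is not an argument: irreducibility of the fibre over \emph{generic} $t \neq 0$ does not exclude finitely many reducible fibres at special values $t \neq 0$, and any such fibre would enlarge the kernel and destroy the claimed equality. This is not a hypothetical worry within this paper: the bookkeeping after Corollary \ref{cor:3foldmirror}, namely $\sum_{p \notin \{[1:0],[0:1],[1:1]\}}(\rho_p - 1) + \ell + k = 20$, explicitly contemplates reducible members away from the distinguished points, and Theorem \ref{thm:fib-bir} identifies such members with fibres of the LG models at $t \neq 0$; ruling them out is itself nontrivial. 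Relatedly, your relation $\sum_k [F_k] = 0$ is false as written: the zero fibres of these LG models are non-reduced in general (the type $\mathrm{II}$ and $\mathrm{IV}$ fibres appearing in Section \ref{sec:mirrorquartics} have components of multiplicity up to $4$), so the relation is $\sum_k m_k[F_k] = 0$ --- harmless for the rank count of one relation, but a sign the actual fibre geometry has not been engaged. Finally, the hypothesis that $X_i$ is the pull-back of an \emph{ample} hypersurface in $\PP_\Delta$ never enters your sketch, yet it must be used somewhere, presumably in the combinatorial matching of the component count over $0$ with $h^{d-2,1}(X_i)$ --- exactly the step you yourself flag as the main obstacle and only outline by analogy with Proposition \ref{prop:Cgenus}. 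Since that matching is the entire content of the theorem, what you have is a plausible plan rather than a proof; note also that a direct lattice-point computation of $\rho_0$ in the style you sketch at the end would bypass the problematic Hodge-theoretic detour (and hence issue (b)) entirely, which is almost certainly the better route.
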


From this, we immediately obtain:

\begin{corollary}\label{cor:3foldmirror}
Suppose that $\dim(V) = \dim(W) = 3$ and let $\rho_p$ be the number of irreducible components in the member of the pencil $\calP$ corresponding to $p \in \mathbb{P}^1$. If $X_1$ and $X_2$ are pullbacks of ample hypersurfaces in $\mathbb{P}_\Delta$, then
\begin{itemize}
\item $\rho_{[1:0]} = h^{2,1}(X_1) + 1$,
\item $\rho_{[1:1]} = h^{2,1}(X_2) +1$,
\item $\rho_{[0:1]} = h^{1,0}(C) +1$.
\end{itemize}
\end{corollary}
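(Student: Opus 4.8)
The plan is to assemble the corollary from three results already in hand: the component count for $\calP([0:1])$ obtained from Proposition~\ref{prop:fibinf} and Proposition~\ref{prop:Cgenus}, together with Theorem~\ref{thm:fib-bir} and Theorem~\ref{thm:andrewsthm}. Each of the three bullet points is governed by one of these, so the argument is largely bookkeeping; the one substantive point is that the number of irreducible components is preserved under the birational equivalences of Theorem~\ref{thm:fib-bir}.

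First I would dispose of $\rho_{[0:1]}$. Proposition~\ref{prop:fibinf} identifies this fibre with $\bigcup_{\sigma \in (\Delta^\circ \setminus \nabla)\cap N}(D_\sigma \cap W)$, and the proposition immediately following it shows that each $D_\sigma \cap W$ contributes exactly one irreducible component, so $\rho_{[0:1]} = \#(\Delta^\circ \setminus \nabla) \cap N$. Proposition~\ref{prop:Cgenus} (applicable since $\dim V = 3$ gives $\dim \nabla_i = 4$ for generic data) computes $g(C) = \#(\Delta^\circ \setminus \nabla) \cap N - 1$ for the curve $C = V \cap X_1 \cap X_2$. As $C$ is a smooth curve we have $h^{1,0}(C) = g(C)$, whence $\rho_{[0:1]} = g(C) + 1 = h^{1,0}(C) + 1$.

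Next I would treat $\rho_{[1:0]}$ and $\rho_{[1:1]}$ together. Theorem~\ref{thm:fib-bir} states that $\calP([1:1])$ and $\calP([1:0])$ are birational to the fibres over $0$ of the LG models $(Y_1, \mathsf{w}_1)$ and $(Y_2, \mathsf{w}_2)$ respectively. Since $\dim V = 3$ forces the ambient polytope to satisfy $\dim \Delta = 4 \geq 4$, Theorem~\ref{thm:andrewsthm} applies to each $X_i$ (invoking the standing hypothesis that the $X_i$ are pullbacks of ample hypersurfaces in $\mathbb{P}_\Delta$) and shows that the fibre over $0$ of $(Y_i,\mathsf{w}_i)$ has $h^{d-2,1}(X_i) + 1 = h^{2,1}(X_i) + 1$ irreducible components. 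The remaining step is to transport this count across the birational equivalence: the pencil members and the LG fibres are surfaces all of whose components are two-dimensional, and the birational map $\varphi$ constructed in the proof of Theorem~\ref{thm:fib-bir} restricts to an isomorphism away from torus-invariant loci of codimension $\geq 2$ and contracts no component of the fibre. It therefore induces a bijection on top-dimensional irreducible components, giving $\rho_{[1:1]} = h^{2,1}(X_1) + 1$ and $\rho_{[1:0]} = h^{2,1}(X_2) + 1$, which matches the corollary up to the choice of labeling of the two quasi-Fano components.

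The main obstacle is precisely this transport step: one must confirm that $\varphi$ neither collapses a two-dimensional component of the LG fibre into the removed divisors $D_\sigma$ for $\sigma \in (\Delta^\circ \setminus \nabla) \cap N$ nor breaks a single component into several. Fortunately the argument inside Theorem~\ref{thm:fib-bir} already verifies that no component of $\calP([1:t])$ lies in one of these divisors or in a codimension $\geq 2$ torus-invariant locus, which is exactly what is needed for $\varphi$ to be component-preserving; so the obstacle is really one of citing the correct portion of that proof rather than proving something new. A secondary point worth flagging is the double use of the symbol $d$: in Theorem~\ref{thm:andrewsthm} it denotes $\dim \Delta = 4$, so that $h^{d-2,1}(X_i) = h^{2,1}(X_i)$, whereas in the count for $\calP([0:1])$ the relevant dimension is $\dim V = 3$. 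Reconciling these two conventions is what confirms that all three formulas output the Hodge numbers in the stated form.
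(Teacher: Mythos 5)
Your proposal is correct and is essentially the paper's own argument: the paper derives the corollary ``immediately'' by combining the identification of $\calP([1:1])$ and $\calP([1:0])$ with LG fibres over $0$ (Theorem~\ref{thm:fib-bir}), the component count of Theorem~\ref{thm:andrewsthm} with $d = \dim\Delta = 4$, and the count $\rho_{[0:1]} = \#(\Delta^\circ\setminus\nabla)\cap N = g(C)+1$ from Propositions~\ref{prop:fibinf} and~\ref{prop:Cgenus}. Your extra care in checking that the birational map $\varphi$ preserves the number of two-dimensional components (and your observation that Theorem~\ref{thm:fib-bir} pairs $[1:1]$ with $X_1$ and $[1:0]$ with $X_2$, so the corollary's indexing holds only up to relabeling) is a worthwhile explicit verification of points the paper leaves implicit, but it does not change the route.
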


\begin{remark} Note that the same result is true for $\hat{W}$, as the birational transformation from $W$ to $\hat{W}$ is an isomorphism in codimension $1$. Thus the preceding corollary can be interpreted as a count of components in singular fibres of the K3 surface fibration $\pi\colon \hat{W} \to \PP^1$ on $\hat{W}$.
\end{remark}

Now let $\ell$ be the rank of the image of the restriction map $ H^2(\hat{W},\mathbb{C}) \rightarrow  H^2(S,\mathbb{C})$, for $S$ a smooth fibre of $\pi$. Using \cite[Lemma 3.2]{cytfmqk3s}, one can easily show that 
\[h^{1,1}(\hat{W}) = \sum_{p \in \mathbb{P}^1} (\rho_p - 1) + \ell + 1\]
Moreover, by \cite{dpmscyhtv} we see that $h^{2,1}(V) = h^{1,1}({W}) = h^{1,1}(\hat{W})$. So, noting that $h^{2,1}(\hat{X}_1) = h^{2,1}(X_1) + g(C)$ (see, for instance, \cite[Theorem 7.31]{htcagi}), Theorem \ref{thm:lee} gives
\[\sum_{p \in \mathbb{P}^1\setminus \{[1:0],[0:1],[1:1]\}}(\rho_p-1) + \ell + k = 20.\]

This implies that if Dolgachev-Nikulin mirror symmetry does not hold (in a precise sense) for the K3 surfaces associated to the nef partition $\Delta_1,\Delta_2$ and their Batyrev-Borisov duals, then this failure is seen by the fibres of the fibration $\pi\colon \hat{W} \to \PP^1$ away from the points in the set $\{[1:0],[0:1],[1:1]\}$.
\medskip

Finally, one may ask whether an analogue of Corollary \ref{cor:3foldmirror} holds when $V$ and $W$ are K3 surfaces. The difficulty here is in proving an analogue of Theorem \ref{thm:andrewsthm}: for subtle combinatorial reasons, the proof given in \cite{harderthesis} does not easily generalize to the K3 surface case. However, we expect the following conjecture to hold in this case:

\begin{conjecture}\label{conj:K3mirror} Suppose that $\dim(V) = \dim(W) = 2$ and let $\rho_p$ be the number of irreducible components in the fibre of the elliptic fibration $\pi\colon W \to \PP^1$ over $p \in \mathbb{P}^1$. If $X_1$ and $X_2$ are pullbacks of ample hypersurfaces in $\mathbb{P}_\Delta$, then
\begin{itemize}
\item $\rho_{[1:0]} = h^{1,1}(X_1) - h^{1,1}(X_{\Delta}) + 1$,
\item $\rho_{[1:1]} = h^{1,1}(X_2) - h^{1,1}(X_{\Delta}) + 1$,
\item $\rho_{[0:1]}$ is the number of points in $V \cap X_1 \cap X_2$ and the corresponding singular fibre is semistable (Kodaira type $I_n$), and
\item all other fibres of $\pi$ are irreducible.
\end{itemize}
\end{conjecture}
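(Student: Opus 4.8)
The plan is to handle the three distinguished fibres over $[0:1]$, $[1:0]$ and $[1:1]$ individually, and then to account for all remaining members of $\calP$ at once. For the fibre over $[0:1]$ the component count is already in hand: the computation immediately preceding this conjecture gives that $\calP([0:1])$ has $\#(\Delta^\circ\setminus\nabla)\cap N$ irreducible components, and the remark following Proposition \ref{prop:Cgenus} identifies this number with $\#(V\cap X_1\cap X_2)$ when $\dim V=2$. To pin down the Kodaira type I would work with the explicit description $\calP([0:1])=\bigcup_\sigma(D_\sigma\cap W)$. A genus computation in the style of Proposition \ref{prop:Cgenus} shows that each component $D_\sigma\cap W$ is rational, and the combinatorics of the codimension-two faces of $\Delta^\circ$ meeting $(\Delta^\circ\setminus\nabla)\cap N$ shows that these curves meet transversally. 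Since $\calP([0:1])$ is a fibre of an elliptic fibration on the smooth K3 surface $W$, it has arithmetic genus one; a connected nodal configuration of $n$ rational curves of arithmetic genus one must form a single cycle, and is therefore a semistable fibre of type $I_n$.

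The substantial part of the argument concerns the fibres over $[1:0]$ and $[1:1]$. By Theorem \ref{thm:fib-bir} these are birational to the fibres over $0$ of the Landau-Ginzburg models of $X_1$ and $X_2$, so it is enough to establish the surface analogue of Theorem \ref{thm:andrewsthm}: for a quasi-Fano surface $X_i$ pulled back from $\PP_\Delta$, the fibre over $0$ of its Landau-Ginzburg model has exactly $h^{1,1}(X_i)-h^{1,1}(X_\Delta)+1$ irreducible components. Combined with Theorem \ref{thm:fib-bir} this yields $\rho_{[1:0]}=h^{1,1}(X_1)-h^{1,1}(X_\Delta)+1$ and $\rho_{[1:1]}=h^{1,1}(X_2)-h^{1,1}(X_\Delta)+1$. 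I would prove the analogue by the toric Koszul method of Proposition \ref{prop:Cgenus}: resolve the structure sheaf of the central fibre by the Koszul complex of the two line bundles cutting it out in $X_\nabla$, and extract the number of components from the appropriate term of the resulting hypercohomology spectral sequence, evaluating the cohomology groups with the lattice-point formulae of \cite[Theorem 2.5]{cycitv}. The correction term $-h^{1,1}(X_\Delta)$ encodes the divisor classes on $X_i$ that are pulled back from the ambient toric variety; these contribute no new components and must be subtracted, so the real content is to follow these toric classes through the spectral sequence.

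For every remaining member of $\calP$ I would appeal to the second half of Theorem \ref{thm:fib-bir}, by which a generic $\calP(p)$ with $p\neq[0:1]$ is birational to a fibre of one of the two Landau-Ginzburg models. It then remains to show that each model has reducible fibres only over $0$, so that every other fibre of $\pi$ is irreducible. This is a structural feature of the Givental-type compactifications of \cite{harderthesis}: on the open torus the superpotential has only nondegenerate critical points away from $0$, producing nodal and hence irreducible fibres of type $I_1$. I would confirm this by inspecting the Jacobian ideal of $\mathsf{w}_i$ on the dense torus and checking that the compactification adds no further reducible fibres, the reducibility being concentrated entirely in the central fibre.

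The main obstacle is the surface analogue of Theorem \ref{thm:andrewsthm}. In the threefold case the invariant $h^{2,1}(X_i)$ is intrinsically non-toric and the relevant spectral sequence degenerates cleanly, whereas for a surface $h^{1,1}(X_i)$ mixes toric and non-toric divisor classes and the ambient threefold $X_\Delta$ carries a far richer pattern of codimension-two faces. The spectral sequence therefore need not degenerate at the expected page, and one must carefully control the interplay between the toric correction $h^{1,1}(X_\Delta)$ and the lattice-point counts of the shifted polytopes $\nabla_i+\Delta^\circ$. It is exactly this combinatorial interaction that obstructed the argument of \cite{harderthesis} in the surface case, and resolving it is where a proof will have to do genuinely new work.
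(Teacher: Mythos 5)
You should first note that the paper does not prove this statement at all: it is stated as Conjecture \ref{conj:K3mirror} precisely because the authors could not establish the surface analogue of Theorem \ref{thm:andrewsthm} --- they say explicitly that ``for subtle combinatorial reasons, the proof given in \cite{harderthesis} does not easily generalize to the K3 surface case.'' Your proposal correctly assembles everything the paper actually proves: the count $\rho_{[0:1]} = \#(V \cap X_1 \cap X_2)$ does follow from the proposition preceding Proposition \ref{prop:Cgenus} together with the $\dim(V)=2$ remark, and Theorem \ref{thm:fib-bir} does reduce the fibres over $[1:0]$ and $[1:1]$ to the central fibres of the LG models of $X_1$ and $X_2$ (modulo the routine extension of that theorem from the threefold setting in which it is stated). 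But the load-bearing step --- that the central LG fibre of a quasi-Fano surface pulled back from $\PP_\Delta$ has exactly $h^{1,1}(X_i) - h^{1,1}(X_\Delta) + 1$ components --- is exactly the step you defer, and your sketch of it (``follow the toric classes through the spectral sequence'') is a programme, not an argument. You acknowledge this in your final paragraph, so your proposal is best read as an honest reduction of the conjecture to its known hard kernel, which is the same position the paper itself is in.

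Two further steps in your sketch are weaker than they appear. For semistability over $[0:1]$ you assert that the components $D_\sigma \cap W$ meet transversally, but transversality is precisely what the paper is careful never to claim: in Example \ref{ex:P1P1P1} the authors can only conclude semistability of the $[0:1]$ fibre in that specific example, and they point out that fibres with two components could a priori be of Kodaira type $\mathrm{III}$ rather than $I_2$ because normal crossings is not automatic. A genuine proof would need a combinatorial argument that the points of $(\Delta^\circ \setminus \nabla) \cap N$ lie on edges of $\Delta^\circ$ forming a cycle and that the corresponding curves meet pairwise transversally in single points; you assert this without a mechanism. Similarly, for the last bullet you argue irreducibility of all remaining fibres by inspecting critical points of $\mathsf{w}_i$ on the dense torus, but reducible fibres can also arise from the compactifying boundary, and nothing proven in the paper excludes them --- even in the degree-two example of Section \ref{sec:deg2} irreducibility over $[1:0]$ and $[1:1]$ is verified case by case using techniques of \cite{harderthesis}. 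Indeed, in the threefold case the fibres away from $\{[1:0],[0:1],[1:1]\}$ are exactly where a failure of Dolgachev--Nikulin mirror symmetry would manifest (the identity $\sum_p(\rho_p - 1) + \ell + k = 20$ following Corollary \ref{cor:3foldmirror}), so irreducibility there is part of the conjectural content rather than a routine check. In short: your reduction is sound and matches the paper's framing, but the three genuinely conjectural ingredients --- the surface analogue of Theorem \ref{thm:andrewsthm}, semistability over $[0:1]$, and irreducibility elsewhere --- remain open in your proposal just as they do in the paper.
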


We will illustrate this conjecture with an example.

\begin{example}[Anticanonical hypersurfaces in $(\mathbb{P}^1)^3$]\label{ex:P1P1P1}
Let us take $V$ to be an anticanonical hypersurface in $(\mathbb{P}^1)^3$. This is a K3 surface with Picard lattice of rank $3$, isomorphic to the lattice with Gram matrix
\[\left( \begin{matrix} 0 & 2 & 2 \\ 2 & 0 & 2 \\ 2 & 2& 0 \end{matrix} \right).\]

There is a Tyurin degeneration of $V$ to a union of two $(1,1,1)$ divisors $X_1,X_2$ in $(\mathbb{P}^1)^3$. The intersection $V \cap X_1 \cap X_2$ is $12$ points. On the other side, we see that there is an elliptic fibration on the mirror dual K3 surface $W$, which has three reducible fibres of types ${I}_{12}$, ${I}_2$ and $I_2$. 

The polytope $\Delta$ determining $(\mathbb{P}^1)^3$ has vertices $\sigma_0,\dots, \sigma_5$ given by the columns of the matrix
\[\left( \begin{matrix} 1 & -1 & 0 & 0 & 0 & 0 \\ 0 & 0 & 1 & -1 & 0 & 0 \\ 0 & 0 & 0 & 0& 1 & -1 \end{matrix} \right)\]
The appropriate nef partition is $E_1 = \{ \sigma_0,\sigma_2,\sigma_4\}$ and $E_2 = \{\sigma_1,\sigma_3,\sigma_5\}$, which has dual nef partition with 
\begin{align*}
\nabla_1 &= \mathrm{Conv}(\{ (1,0,0), (0,1,0),(0,0,1),(1,1,1),(1,1,0),(0,1,1),(0,1,1), (0,0,0)\}) \\
\nabla_2 &=- \nabla_1.
\end{align*}

We draw the polytopes $\Delta^\circ$, $\nabla_1$ and $\nabla_2$ in Figure \ref{fig:P1P1P1}. The leftmost picture in Figure \ref{fig:P1P1P1} is just the polytope $\Delta^\circ$, the middle picture denotes $\nabla_1$ and $\nabla_2$ and the picture on the right shows $(\Delta^\circ \setminus \nabla )\cap N$. It is clear from the description of the fibre over $[0:1]$ that it is actually semi-stable, so it follows from Kodaira's classification of singular fibres of elliptic fibrations that the resulting fibre is necessarily of type $\mathrm{I}_{12}$. The same cannot be said for the fibres over $[1:0]$ and $[1:1]$, which have two components each, since it is not necessarily true that these fibres have normal crossings. Kodaira's classification can only be used to determine that these fibres are either of type $\mathrm{I}_2$ or of type $\mathrm{III}$.
\end{example}

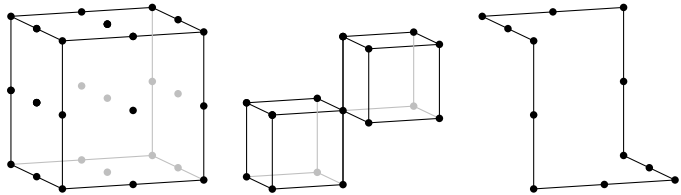
\begin{figure}
\centering{
\tdplotsetmaincoords{100}{20}
\begin{tikzpicture}[tdplot_main_coords]
	\draw[,-](1,1,1) -- (1,1,-1) ;	
	\draw[,-](1,1,1) -- (-1,1,1) ;
	\draw[,-](1,1,1) -- (1,-1,1) ;
	\draw[color=lightgray,-](1,1,-1) -- (1,-1,-1) ;
	\draw[,-](1,1,-1) -- (-1,1,-1) ;
	\draw[,-](-1,1,1) -- (-1,-1,1) ;
	\draw[,-](-1,-1,1) -- (-1,-1,-1) ;
	\draw[color=lightgray,-](-1,-1,-1) -- (1,-1,-1) ;
	\draw[,-](-1,-1,-1) -- (-1,1,-1) ;
	\draw[color=lightgray,-](1,-1,-1) -- (1,-1,1) ;
	\draw[,-](-1,1,-1) -- (-1,1,1) ;
	\draw[,-](1,-1,1) -- (-1,-1,1) ;

	\draw(-1,-1,-1) node[circle,fill,inner sep=1pt,color=black](){};
\draw(-1,0,-1) node[circle,fill,inner sep=1pt,color=black](){};
\draw(-1,-1,0) node[circle,fill,inner sep=1pt,color=black](){};
\draw(0,-1,-1) node[circle,fill,inner sep=1pt,color=lightgray](){};
\draw(-1,0,0) node[circle,fill,inner sep=1pt,color=black](){};
\draw(0,0,0) node[circle,fill,inner sep=1pt,color=lightgray](){};

\draw(1,-1,-1) node[circle,fill,inner sep=1pt,color=lightgray](){};
\draw(1,0,-1) node[circle,fill,inner sep=1pt,color=lightgray](){};

\draw(1,-1,0) node[circle,fill,inner sep=1pt,color=lightgray](){};
\draw(1,0,0) node[circle,fill,inner sep=1pt,color=lightgray](){};

\draw(-1,1,-1) node[circle,fill,inner sep=1pt,color=black](){};
\draw(-1,0,0) node[circle,fill,inner sep=1pt,color=black](){};
\draw(0,0,-1) node[circle,fill,inner sep=1pt,color=lightgray](){};
\draw(0,1,-1) node[circle,fill,inner sep=1pt,color=black](){};

\draw(0,1,0) node[circle,fill,inner sep=1pt,color=black](){};

	\draw(-1,-1,0) node[circle,fill,inner sep=1pt,color=black](){};
\draw(-1,0,0) node[circle,fill,inner sep=1pt,color=black](){};
\draw(-1,-1,1) node[circle,fill,inner sep=1pt,color=black](){};
\draw(0,-1,0) node[circle,fill,inner sep=1pt,color=lightgray](){};
\draw(-1,0,1) node[circle,fill,inner sep=1pt,color=black](){};
\draw(0,-1,1) node[circle,fill,inner sep=1pt,color=black](){};
\draw(0,0,1) node[circle,fill,inner sep=1pt,color=black](){};

\draw(-1,1,0) node[circle,fill,inner sep=1pt,color=black](){};
\draw(-1,0,1) node[circle,fill,inner sep=1pt,color=black](){};

\draw(-1,1,1) node[circle,fill,inner sep=1pt,color=black](){};
\draw(0,0,1) node[circle,fill,inner sep=1pt,color=black](){};
\draw(0,1,1) node[circle,fill,inner sep=1pt,color=black](){};

\draw(0,0,1) node[circle,fill,inner sep=1pt,color=black](){};
\draw(1,1,0) node[circle,fill,inner sep=1pt,color=black](){};
\draw(0,1,1) node[circle,fill,inner sep=1pt,color=black](){};
\draw(1,0,1) node[circle,fill,inner sep=1pt,color=black](){};
\draw(1,1,1) node[circle,fill,inner sep=1pt,color=black](){};
\draw(1,1,-1) node[circle,fill,inner sep=1pt,color=black](){};
\draw(1,-1,1) node[circle,fill,inner sep=1pt,color=black](){};
\end{tikzpicture}
\tdplotsetmaincoords{100}{20}
\begin{tikzpicture}[tdplot_main_coords]
	\draw[ -](1,1,1) -- (1,1,0) ;	
	\draw[-](1,1,1) -- (0,1,1) ;
	\draw[-](1,1,1) -- (1,0,1) ;

	\draw[,-](-1,-1,0) -- (-1,-1,-1) ;
	\draw[color=lightgray,-](-1,-1,-1) -- (0,-1,-1) ;
	\draw[,-](-1,-1,-1) -- (-1,0,-1) ;
	\draw[,-](-1,0,-1) -- (-1,0,0) ;
		\draw[color=lightgray,-](0,0,0) -- (1,0,0) ;
		\draw[,-](-1,0,0) -- (0,0,0) ;
	\draw[,-](0,1,0) -- (0,-1,0) ;
	\draw[,-](0,0,1) -- (0,0,-1) ;
	\draw[,-](0,0,1) -- (0,0,-1) ;
	\draw[,-](0,1,1) -- (0,1,0) ;
	\draw[,-](-1,-1,0) -- (-1,0,0) ;
	\draw[,-](-1,-1,0) -- (0,-1,0) ;
	\draw[color=lightgray,-](1,1,0) -- (1,0,0) ;
	\draw[,-](1,1,0) -- (0,1,0) ;

	\draw[color=lightgray,-](0,0,-1) -- (0,-1,-1) ;
	\draw[,-](0,0,-1) -- (-1,0,-1) ;
		\draw[-](0,0,1) -- (0,1,1) ;
	\draw[,-](0,0,1) -- (1,0,1) ;
	\draw[color=lightgray,-](0,-1,0) -- (0,-1,-1) ;
	\draw[color=lightgray,-](1,0,1) -- (1,0,0) ;

	\draw(-1,-1,-1) node[circle,fill,inner sep=1pt,color=black](){};
\draw(-1,0,-1) node[circle,fill,inner sep=1pt,color=black](){};
\draw(-1,-1,0) node[circle,fill,inner sep=1pt,color=black](){};
\draw(0,-1,-1) node[circle,fill,inner sep=1pt,color=lightgray](){};
\draw(-1,0,0) node[circle,fill,inner sep=1pt,color=black](){};
\draw(0,0,0) node[circle,fill,inner sep=1pt](){};

\draw(1,0,0) node[circle,fill,inner sep=1pt,color=lightgray](){};

\draw(-1,0,0) node[circle,fill,inner sep=1pt,color=black](){};
\draw(0,0,-1) node[circle,fill,inner sep=1pt,color=black](){};

\draw(0,1,0) node[circle,fill,inner sep=1pt,color=black](){};

	\draw(-1,-1,0) node[circle,fill,inner sep=1pt,color=black](){};
\draw(-1,0,0) node[circle,fill,inner sep=1pt,color=black](){};

\draw(0,-1,0) node[circle,fill,inner sep=1pt,color=black](){};

\draw(0,0,1) node[circle,fill,inner sep=1pt,color=black](){};

\draw(0,0,1) node[circle,fill,inner sep=1pt,color=black](){};

\draw(0,0,1) node[circle,fill,inner sep=1pt,color=black](){};
\draw(1,1,0) node[circle,fill,inner sep=1pt,color=black](){};
\draw(0,1,1) node[circle,fill,inner sep=1pt,color=black](){};
\draw(1,0,1) node[circle,fill,inner sep=1pt,color=black](){};
\draw(1,1,1) node[circle,fill,inner sep=1pt,color=black](){};
\end{tikzpicture}
\tdplotsetmaincoords{100}{20}
\begin{tikzpicture}[tdplot_main_coords]

	\draw[color=black,-](1,1,-1) -- (1,-1,-1) ;
	\draw[color=black,-](1,1,-1) -- (-1,1,-1) ;
	\draw[color=black,-](1,-1,-1) -- (1,-1,1) ;
	\draw[color=black,-](1,-1,1) -- (-1,-1,1) ;
	\draw[color=black,-](-1,1,1) -- (-1,-1,1) ;
	\draw[color=black,-](-1,1,1) -- (-1,1,-1) ;

\draw(1,-1,1) node[circle,fill,inner sep=1pt,color=black](){};

\draw(1,-1,-1) node[circle,fill,inner sep=1pt,color=black](){};
\draw(1,0,-1) node[circle,fill,inner sep=1pt,color=black](){};

\draw(1,-1,0) node[circle,fill,inner sep=1pt,color=black](){};

\draw(1,1,-1) node[circle,fill,inner sep=1pt,color=black](){};
\draw(-1,1,-1) node[circle,fill,inner sep=1pt,color=black](){};
\draw(0,1,-1) node[circle,fill,inner sep=1pt,color=black](){};

\draw(-1,-1,1) node[circle,fill,inner sep=1pt,color=black](){};
\draw(0,-1,1) node[circle,fill,inner sep=1pt,color=black](){};

\draw(-1,1,0) node[circle,fill,inner sep=1pt,color=black](){};
\draw(-1,0,1) node[circle,fill,inner sep=1pt,color=black](){};

\draw(-1,1,1) node[circle,fill,inner sep=1pt,color=black](){};
\end{tikzpicture}
\caption{Polytopes related to Example \ref{ex:P1P1P1}\label{fig:P1P1P1}}
}
\end{figure}

\section{Dolgachev-Nikulin mirror symmetry} \label{sec:DNforK3s}

Next we'll turn our attention to K3 surfaces. As noted in the previous section, in the setting of Batyrev-Borisov mirror symmetry, a refinement of the nef partition defining our K3 surface $V$ gives rise to both a Tyurin degeneration of $V$ and an elliptic fibration on the mirror K3 surface $W$. In the K3 surface case this appears to be part of a wider correspondence, which seems to have first been noticed by Dolgachev \cite{mslpk3s}, between Type II degenerations (of which Tyurin degenerations are an example) and elliptic fibrations on the Dolgachev-Nikulin mirror.

We begin by giving a precise statement of this correspondence. Suppose we have a (pseudo-ample) $L$-polarized K3 surface $V$, for some lattice $L$. To define the Dolgachev-Nikulin mirror of $V$, we first fix a primitive isotropic vector $f$ in the orthogonal complement $L^{\perp}$ of $L$ in the K3 lattice $\Lambda_{\mathrm{K3}} \cong H^{\oplus 3} \oplus E_8^{\oplus 2}$ (where $H$ denotes the hyperbolic plane lattice and $E_8$ is the negative definite root lattice $E_8$). With this in place, the Dolgachev-Nikulin mirror $W$ of $V$ is defined to be an $\check{L}$-polarized K3 surface, where 
\[\check{L} := (\ZZ f)^{\perp}_{L^{\perp}} / \ZZ f.\]
Note that this depends upon the choice of isotropic vector $f$.

By the discussion in \cite[Section 6]{mslpk3s} (see also \cite[Section 2.1]{cmsak3s}), fixing $f$ is equivalent to fixing a $0$-dimensional cusp (Type III point) in the Baily-Borel compactification $\overline{D}_L$ of the period domain $D_L$ of $L$-polarized K3 surfaces. Call this cusp $P$.

Then we have the following result, which is essentially contained in \cite[Remark 7.11]{mslpk3s}:

\begin{proposition}\label{prop:cuspfibration} With notation as above, there is a bijective correspondence between $1$-dimensional cusps in $\overline{D}_L$ that pass through $P$, and primitive isotropic vectors $e \in \check{L}$.
\end{proposition}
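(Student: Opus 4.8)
The plan is to translate the statement entirely into lattice theory by invoking the standard description of the rational boundary components of a Type IV period domain, and then to exhibit an explicit bijection induced by reduction modulo $f$. Write $T = L^{\perp} \subseteq \Lambda_{\mathrm{K3}}$ for the orthogonal complement, an even lattice of signature $(2, 20 - \rank L)$, so that $D_L$ is the Type IV domain attached to $T$. I would first recall the dictionary (this is the theory underlying \cite[Section 6]{mslpk3s} and \cite[Section 2.1]{cmsak3s}): the rational boundary components of $\overline{D}_L$ are indexed by the totally isotropic subspaces of $T \otimes \QQ$, with isotropic lines giving the $0$-dimensional cusps and isotropic planes giving the $1$-dimensional cusps, and with a $0$-dimensional cusp lying in the closure of a $1$-dimensional cusp exactly when the corresponding line is contained in the corresponding plane. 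It is this incidence statement that makes ``passes through $P$'' a purely linear-algebraic condition. Under the dictionary the chosen cusp $P$ is the isotropic line $\QQ f$, and after passing to primitive generators one sees that $1$-dimensional cusps through $P$ correspond to rank-$2$ primitive isotropic sublattices $J \subseteq T$ with $f \in J$.

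It then suffices to construct a bijection between such sublattices $J$ and primitive isotropic vectors $e \in \check{L} = f^{\perp}_{T}/\ZZ f$. Strictly this produces $e$ only up to sign, equivalently a rank-$1$ primitive isotropic sublattice $\ZZ e \subseteq \check{L}$, and I would flag this harmless ambiguity at the start. The forward map sends $J$ to a generator $e$ of $J/\ZZ f$: since $f \in J$ and $J$ is isotropic one has $J \subseteq f^{\perp}_{T}$, so $J/\ZZ f$ is a well-defined rank-$1$ subgroup of $\check{L}$. The reverse map sends a primitive isotropic $e$ to $J := \ZZ f + \ZZ \tilde e$, where $\tilde e \in f^{\perp}_{T}$ is any lift of $e$; this is independent of the lift because replacing $\tilde e$ by $\tilde e + a f$ leaves the subgroup unchanged.

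The verifications are all short. Isotropy transfers in both directions because the form descends to $\check{L}$: for $x \in f^{\perp}_{T}$ one computes $\langle x + a f, x + a f\rangle = \langle x, x\rangle$ using $\langle x, f\rangle = 0$ and $\langle f, f\rangle = 0$, so $\langle e, e\rangle_{\check{L}} = \langle \tilde e, \tilde e\rangle_{T}$ and the vanishing of one side forces the other; in particular $J = \ZZ f + \ZZ \tilde e$ is totally isotropic. For primitivity I would argue by saturation. Since $f$ is primitive in $T$ and $f^{\perp}_{T}$ is saturated in $T$, the sublattice $\ZZ f$ is saturated in $f^{\perp}_{T}$, so $\check{L}$ is a genuine (torsion-free, nondegenerate) lattice. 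Then $J$ is saturated in $T$ iff it is saturated in $f^{\perp}_{T}$ (as $f^{\perp}_{T}$ is saturated in $T$), and the latter is equivalent to $J/\ZZ f$ being saturated in $\check{L}$: if $x \in f^{\perp}_{T}$ and $n x \in J$ then $n\bar x \in J/\ZZ f$, and conversely $n\bar x \in J/\ZZ f$ lifts to $n x \in J + \ZZ f = J$, so the two saturation conditions match. Running this both ways shows each map lands in the claimed class, and the maps are visibly mutually inverse since $e$ generates $(\ZZ f + \ZZ \tilde e)/\ZZ f$.

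I expect the only genuine obstacle to be conceptual rather than computational: one must confirm that the incidence ``the $1$-dimensional cusp passes through $P$'' corresponds exactly to the inclusion $\QQ f \subseteq J \otimes \QQ$ of isotropic subspaces, and that the cusps of $\overline{D}_L$ are indexed by isotropic subspaces in the sense intended here (rather than by arithmetic orbits of such). Once that dictionary is pinned down by citing \cite[Remark 7.11]{mslpk3s} together with the standard theory of Baily--Borel boundary components, the remaining lattice bookkeeping --- descent of the quadratic form and the two saturation arguments --- is routine, and the sign ambiguity between the vector $e$ and the sublattice $\ZZ e$ is the only point where the literal phrasing of the proposition warrants a word of care.
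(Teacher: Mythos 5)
Your proposal is correct and takes essentially the same route as the paper: both reduce the statement, via the dictionary of \cite[Section 2.1]{cmsak3s} cited in the surrounding text, to a bijection between rank-two primitive isotropic sublattices of $L^{\perp}$ containing $f$ and rank-one primitive isotropic sublattices of $\check{L}$, realized by quotienting by $\ZZ f$ in one direction and spanning with $f$ in the other. Your write-up merely makes explicit some bookkeeping the paper's proof leaves implicit (the choice of lift $\tilde e$ and its well-definedness, descent of the quadratic form, the saturation arguments, and the sign ambiguity between the vector $e$ and the sublattice $\ZZ e$), all of which is consistent with the paper's argument.
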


\begin{remark} Since $1$-dimensional cusps in $\overline{D}_L$ correspond to Type II degenerations of $V$ and isotropic vectors $e \in \check{L}$ correspond to elliptic fibrations on $W$, this gives rise to a correspondence between Type II degenerations of $V$ and elliptic fibrations on $W$ (up to automorphism).
\end{remark}

\begin{proof}[Proof of Proposition \ref{prop:cuspfibration}] Suppose first that we have a $1$-dimensional cusp $C \subset \overline{D}_L$ that passes through $P$. By \cite[Section 2.1]{cmsak3s}, such cusps are in bijection with rank two primitive isotropic sublattices of $L^{\perp}$ that contain the vector $f$. So $C$ gives rise to a sublattice $E$ of $L^{\perp}$ and, since $E$ is a primitive sublattice of $(\ZZ f)^{\perp}_{L^{\perp}}$ and $E$ contains $f$, we see that $E \cap \check{L}$ is a primitive isotropic sublattice of $\check{L}$ of rank $1$. Let $e$ denote a generator of this sublattice; then $e$ is a primitive isotropic vector in $\check{L}$.

Conversely, suppose we have a primitive isotropic vector $e \in \check{L}$. By definition of $\check{L}$, we have $\langle e,f\rangle = 0$. So the lattice $E$ spanned by $e$ and $f$ is a rank two primitive isotropic sublattice of $L^{\perp}$. But this gives rise to a $1$-dimensional cusp passing through $P$.
\end{proof}

We will now illustrate this correspondence in some explicit examples, which will provide a glimpse of some possible deeper structure.

\subsection{$H$-polarized K3's}\label{sec:Hpol}

We begin by looking at the Type II degenerations of $H$-polarized K3's. An $H$-polarized K3 surface may be constructed as an anticanonical hypersurface in an mpcp resolution of the weighted projective space $\WP(1,1,4,6)$. However, the defining polytope of this weighted projective space does not admit any nef partitions, so we cannot apply the theory of Section \ref{sec:BBduality} to study it.

Instead, we will try a different approach to comparing Type II degenerations and elliptic fibrations on the mirror. Note first that an $H$-polarized K3 surface $V$ naturally corresponds to a double cover of the Hirzebruch surface $\mathbb{F}_4$, ramified over a divisor in the linear system $|4s + 12f|$, where $s$ is the class of the $(-4)$-section and $f$ is the class of a fibre (when there is no risk of confusion, we will always denote the $(-n)$-section in a Hirzebruch surface $\FF_n$ by $s$ and a fibre of the ruling by $f$). So $H$-polarized K3 surfaces are equivalent to pairs $(\mathbb{F}_4,B)$, where $B \in |4s + 12f|$ is the branch divisor. 

The moduli space of such pairs admits a \emph{KSBA compactification} (see \cite{lcscmsp,msmgnws,hdasc,tadoss}), which has been studied in detail by Brunyate \cite{brunyatethesis}. This compactification admits a birational morphism to the usual Baily-Borel compactification of the moduli space of $H$-polarized K3 surfaces. The Type II degenerations occur along two boundary components in this KSBA compactification, which map to the two $1$-dimensional cusps in the Baily-Borel compactification. These two kinds of Type II degenerations may be described as follows:

\begin{enumerate}
\item Degenerate $V$ to a union $X_1^1 \cup_Z X_2^1$ defined as follows. $X_1^1$ is a double cover of $\mathbb{F}_4$ ramified over a smooth divisor in the linear system $|2s+12f|$, and $Z \subset X_1^1$ is an elliptic curve given by the pull-back of the $(-4)$-section. $X_2^1$ is also a double cover of $\FF_4$, this time ramified over a smooth divisor in the linear system $|s+4f|$, and $Z \subset X_2^1$ is the pull-back of a second smooth divisor in the linear system $|s+4f|$.

Riemann-Roch easily yields that there is a $17$-dimensional space of deformations of $(X_1^1,Z)$ preserving the double covering structure, and a $1$-dimensional space of deformations of $(X_2^1,Z)$. To glue these components together along $Z$, we need to ensure that the elliptic curves $Z$ in each component are isomorphic; this imposes a single gluing condition. The total configuration therefore has $17 + 1 -1 = 17$ moduli. It is easy to see that any variety $X_1^1 \cup_Z X_2^1$ defined in this way is $d$-semistable (in the sense of \cite{gsvnc}), so admits a smoothing to a K3 surface by \cite[Theorem 5.10]{gsvnc}. Thus, such degenerations lie along a $17$-dimensional boundary component (i.e. a boundary divisor) in the KSBA moduli space.

\item Degenerate $V$ to a union $X_1^2 \cup_Z X_2^2$, where both $X_1^2$ and $X_2^2$ are double covers of $\FF_2$ ramified over smooth divisors in the linear system $|4s+6f|$ and $Z$ is the pull-back of a fibre of the ruling on $\FF_2$. Riemann-Roch yields that each component $(X_i^2,Z)$ has $9$ deformations that preserve the double covering structure and, as before, there is a single gluing condition along $Z$. The total configuration therefore has $9+9-1 = 17$ moduli and is $d$-semistable, so gives another boundary divisor of the moduli space.
\end{enumerate}

Now we look at the mirror. Up to isometry, there are two $1$-dimensional cusps in $\overline{D}_H$, meeting in a unique $0$-dimensional cusp. So we only have one choice of mirror $W$: a K3 surface polarized by the lattice $M:= H \oplus E_8 \oplus E_8$. 

Now we match the Type II degenerations above with elliptic fibrations on the mirror. From \cite[Section 3.3]{milpk3s}, we know that an $M$-polarized K3 surface admits two elliptic fibrations:
\begin{enumerate}
\item The alternate fibration, which has an $I_{12}^*$ and six $I_1$'s. Note that the $I_{12}^*$ fibre has $17$ components, corresponding to the $17$ moduli of the component $X_1^1$ in degeneration (1) above.
\item The standard fibration, which has two $II^*$'s and four $I_1$'s. Note that each $II^*$ fibre has $9$ components, corresponding to the $9$ moduli of each of the components $X_i^2$ in the Type II degeneration (2) above.
\end{enumerate}

\subsection{K3 surfaces of degree two}\label{sec:deg2}

Now consider K3 surfaces of degree two (i.e. polarized by the rank one lattice $\langle 2 \rangle$). A K3 surface $V$ of degree $2$ may be constructed as an anticanonical hypersurface in the weighted projective space $\mathbb{WP}(1,1,1,3)$. This toric variety is determined by the polytope $\Delta \subset M_{\RR} \cong \RR^3$ with vertices given by the columns $\sigma_1,\sigma_2,\sigma_3,\sigma_4$ of the matrix
\[ \left( \begin{matrix} 1 & 0 & 0 & -1 \\ 0 & 1 & 0 & -1\\ 0 & 0 & 1 & -3 \end{matrix}\right).\]

Up to automorphism, there is only one bipartite nef partition of this polytope, given by $E_1 = \{\sigma_1,\sigma_2,\sigma_4\}$ and $E_2 = \{ \sigma_3 \}$. The dual nef partition in $N_\mathbb{R}$ has 
\begin{align*}
\nabla_1 &= \mathrm{Conv}(-e_1^* -e_2^*,-e_1^* -e_2^* + e_3^*, -e_1^* +2e_2^*, 2e_1^* -e_2^*)\\
\nabla_2 &= \mathrm{Conv}(0_N, -e_3^*, 3e_1^*-e_3^*, 3e_2^* - e_3^*)
\end{align*}
where $e_1,e_2$ and $e_3$ are basis vectors of $M_\mathbb{R}$ and $e_i^*$ are their duals.

As in Example \ref{ex:P1P1P1}, we may look at the polytopes $\nabla_1$ and $\nabla_2$ in comparison to $\Delta^\circ$ and note that the integral points in $\Delta^\circ \setminus \nabla$ form a cycle of length $18$. Thus the fibre over $[0:1]$ in the elliptic fibration on the Batyrev dual $W$ described in Section \ref{subsec:mirrorpencils} is of type $ I_{18}$. One can also check, using techniques described in \cite{harderthesis}, that the fibres over $[1:0]$ and $[1:1]$ are irreducible. 

Finally, in the degree two case it is well known that the Batyrev dual family of K3 surfaces is actually Dolgachev-Nikulin dual (see, for instance, \cite{lptk3s}), so a generic Batyrev dual K3 surface $W$ has Picard lattice $M_2 := H \oplus E_8 \oplus E_8 \oplus A_1$. The elliptic fibrations on such an $M_2$-polarized K3 surface were computed by Dolgachev \cite[Remark 7.11]{mslpk3s}; we see that one of them has an $I_{18}$ fibre and six $I_1$'s, as expected.
\medskip

However, it is known (see, for instance,  \cite[Section 6]{cmsak3s}) that the Baily-Borel compactification of the moduli space of K3 surfaces of degree two has four $1$-dimensional cusps, corresponding to four types of Type II degenerations, yet the example above only gives one. We can analyse the others using the same techniques that we used in the $H$-polarized case above.

Indeed, it is well known that a K3 surface of degree two $V$ naturally corresponds to a double cover of $\mathbb{P}^2$ ramified over a smooth sextic curve. The moduli space of K3 surfaces of degree two is therefore the same as the moduli space of pairs $(\mathbb{P}^2,B)$ where $B$ is a sextic curve. 

Alexeev and Thompson have studied a KSBA compactification \cite{mcmk3sd2} for the moduli space of such pairs. The Type II degenerations occur along four boundary components, which map to the four cusps in the Baily-Borel compactification. The four corresponding kinds of Type II degenerations may be described as follows:
\begin{enumerate}
\item Degenerate $V$ to a union $X_1^1 \cup X_2^1$, defined as follows. $X_1^1$ is a double cover of $\mathbb{F}_4$ ramified over a smooth divisor in the linear system $|2s+12f|$, and $Z \subset X_1^1$ is an elliptic curve given by the pull-back of the $(-4)$-section. $X_2^1$ is a double cover of $\PP^2$ ramified over a smooth conic, and $Z \subset X_2^1$ is the pull-back of a second smooth conic. This degeneration is in many ways analogous to degeneration (1) in the $H$-polarized case.

In this case the pair $(X_1^1,Z)$ admits $16$ deformations preserving the double covering structure, and $(X_2^1,Z)$ has two. As in the $H$-polarized case, there is a single gluing condition corresponding to choice of the elliptic curve $Z$, so the total configuration has  $17 + 2 - 1 = 18$ moduli. It is easy to see that all such varieties are $d$-semistable, so they give rise to an $18$-dimensional boundary component (i.e. a boundary divisor) in the KSBA moduli space.

\item Degenerate $V$ to a union $X_1^2 \cup_Z X_2^2$, defined as follows. $X_1^2$ is a double cover of $\FF_1$ ramified over a smooth divisor in the linear system $|2s+6f|$ and $Z \subset X_1^1$ is the pull-back of the $(-1)$-section. $X_2^2$ is a double cover of $\mathbb{P}^2$ ramified over a smooth quartic and $Z \subset X_2^2$ is the pull-back of a line.

Counting deformations that preserve the double cover structure, we see that $(X_1^2,Z)$ has $11$ deformations and the $(X_2^2,Z)$ component has $8$. However, as usual there is a gluing condition along $Z$, so the total configuration therefore has $11 + 8 - 1 = 18$ moduli. Since such varieties are all $d$-semistable, they correspond to a boundary divisor in the moduli space.

\item The last two cases are more interesting. In the first, we degenerate $V$ to a single component $X_1^3$, defined to be the double cover of a cubic cone ramified over a smooth anticanonical divisor. $X_1^3$ thus contains two log canonical singularities, corresponding to the pull-backs of the vertex of the cone. This double cover has $18$ moduli, and corresponds to a boundary divisor in the KSBA moduli space.

Interestingly, this degeneration may be seen to be equivalent to the degeneration given by the nef partition above. Indeed, if $X_1 \cup_Z X_2$ is the degeneration corresponding to this nef partition, we may obtain the degeneration $X_1^3$ by blowing up along $Z$, then contracting $X_1$ and $X_2$. These contractions give rise to the two log canonical singularities present in $X^3_1$.

\item The final case is most difficult. In \cite{mcmk3sd2}, the corresponding degeneration of $V$ is a union $X_1^4 \cup_Z X_2^4$, where each $X_i^4$ is a double cover of $\FF_2^0 \cong \WP(1,1,2)$ (obtained by taking a copy of $\FF_2$ and contracting the $(-2)$-section) ramified over a divisor in the linear system $\calO(6)$ and cyclically over the $A_1$ singularity, and $Z$ is the pull-back of a divisor in the linear system $\calO(1)$. This degeneration is in many ways analogous to degeneration (2) in the $H$-polarized case.

Counting moduli, we see that each component $(X_i^4,Z)$ has $9$ deformations that preserve the double covering structure, but there is, as always, one gluing condition along $Z$. We thus have $9+9-1 = 17$ deformations, which does not give the expected divisor on the boundary. However, there is also something wrong with this degeneration: one can easily check that it is not $d$-semistable, so does not appear as the central fibre in a smooth semistable degeneration of K3 surfaces.

There are two ways to resolve this. The first may be thought of as analogous to the blow-up of $V \cap X_1 \cap X_2$ in $X_1$ from Section \ref{sec:BBduality}: we simply blow up an arbitrary point on the image of $Z$ in one copy of $\mathbb{F}_2^0$. This corresponds to blowing up a pair of points on $Z$ in $X_1^4$, under the condition that the two points blown up are exchanged by the involution defining the double covering. After blowing up,  $X_1^4 \cup_Z X_2^4$ becomes $d$-semistable and acquires an extra modulus, corresponding to the choice of point to blow up, making the corresponding boundary component into a divisor.

However, there is also a second way to resolve this problem. Inspired by (3), above, we could try introducing a third component between $X_1^4$ and $X_2^4$; this has the appeal of maintaining the symmetry of the degenerate fibre. Such a central component $X_3^4$ can be constructed as follows. Take a copy of $\PP^2$ and blow up three points in general position, to get a $(-1)$-hexagon. Blow up one of the vertices of this hexagon again, then contract the two $(-2)$-curves that result. One obtains a surface that has two $A_1$ singularities, and the $(-1)$-hexagon becomes a pentagon whose sides have self-intersections $(-1,-1,-\frac{1}{2},0,-\frac{1}{2})$. The sum of the two $(-1)$-curves in this pentagon gives a ruling and $X_3^4$ is a double cover of this surface ramified over three of its fibres, as well as cyclically over both of the $A_1$ singularities. $X_3^4$ glues to $X_1^4$ and $X_2^4$ along two isomorphic elliptic curves $Z_1$ and $Z_2$, which are the pull-backs of the two $(-\frac{1}{2})$-curves. We thus obtain a degenerate fibre $X_1^4 \cup_{Z_1} X_3^4 \cup_{Z_2} X_2^4$.

Accounting for automorphisms, $(X_3^4,Z_1,Z_2)$ has $2$ deformations. So, with this component included, the total moduli count is $9 + 9 + 2 - 1 - 1 = 18$ (the two $(-1)$'s appear because there is a gluing condition associated to each double curve). Moreover, the fibre $X_1^4 \cup_{Z_1} X_3^4 \cup_{Z_2} X_2^4$ is $d$-semistable, so we get a boundary divisor in moduli.
\end{enumerate}

Now we again match with elliptic fibrations on the mirror. These are computed by Dolgachev \cite[Remark 7.11]{mslpk3s} as:
\begin{enumerate}
\item An elliptic fibration which has one $I_{12}^*$, one $I_2$, and four $I_1$'s. Note that the $I_{12}^*$ fibre has $17$ components, corresponding to the $17$ moduli of $(X_1^1,Z)$ in degeneration (1) above, and the $I_2$ fibre has two components, corresponding to the two moduli of $(X_2^1,Z)$.
\item An elliptic fibration with one fibre of type $I_{6}^*$, one fibre of type $III^*$, and three $I_1$'s. As above, the $I_{6}^*$ fibre has $11$ components, corresponding to the $11$ moduli of $(X_1^2,Z)$ in degeneration (4) above, and the $III^*$ fibre has eight components, corresponding to the eight moduli of $(X_2^2,Z)$.
\item An elliptic fibration with one fibre of type $I_{18}$ and six $I_1$'s. Once again, the $I_{18}$ fibre has $18$ components, corresponding to the $18$ moduli of the single component $X_3^1$ in degeneration (3), above.
\item An elliptic fibration which has two $II^*$'s, one $I_2$, and two $I_1$'s. Note that each $II^*$ fibre has $9$ components, corresponding to the $9$ moduli of $(X_1^4,Z)$ and $(X_2^4,Z)$ in degeneration (2) above, and the $I_2$ fibre has two components. These two components can be thought of as corresponding to the two points on $Z$ which are blown up to make $X_1 \cup_Z X_2$ $d$-semistable (c.f. Conjecture \ref{conj:K3mirror}), or as corresponding to the two moduli of the ``extra'' central component $(X_3^4,Z_1,Z_2)$. 
\end{enumerate}

\subsection{Discussion}\label{sec:discussion}

These results are highly suggestive of the idea that the correspondence between Tyurin degenerations and elliptic fibrations on Batyrev dual K3 surfaces, explored in Section \ref{sec:BBduality}, may extend to a broader correspondence between Type II degenerations of an $L$-polarized K3 surface $V$ and elliptic fibrations on its Dolgachev-Nikulin mirror $W$. Such a correspondence should have properties that generalize those described in Section \ref{sec:BBduality}; the aim of this section is to discuss the form that such properties may take. 

Firstly, cases (3) and (4) of the considerations in Section \ref{sec:deg2} suggest that there may be some correspondence between the choice of model for the Type II degeneration of $V$ and some properties of the elliptic fibration. Thinking about this in the context of the threefold philosophy outlined in Remark \ref{rem:threefoldphilosophy}, one might conjecture the following.

Suppose first that we have a Type II degeneration of $V$ to a configuration $X_1 \cup_Z X_2 \cup_Z \cdots \cup_Z X_k$, where each $X_i$ meets $X_{i-1}$ and $X_{i+1}$. Let $n_i$ be the number of deformations of $(X_i,Z)$ (where $Z$ denotes the double locus on $X_i$ and may have more than one component) that preserve some notion of lattice polarization -- this last condition is to ensure that the deformed Type II fibres still smooth to $L$-polarized K3 surfaces, and was arranged in the preceding examples by requiring that deformations preserve the double covering structure; it also accounts for the appearance of the $h^{1,1}(X_{\Delta})$ term in Conjecture \ref{conj:K3mirror}. Then the fibre dimension of the natural map from the moduli space of such pairs $(X_i,Z)$ to the moduli space of elliptic curves $Z$ is equal to $n_i -1$. 

Thus, noting that all elliptic double curves in the Type II degeneration are isomorphic, we see that such a Type II fibre should have $\sum_{i=1}^k (n_i-1) + 1$ deformations. We first conjecture that this number should equal $19 - \ell$, where $\ell = \rank(L)$. Then, since the moduli space of $V$ (as an $L$-polarized K3 surface) is $(20-\ell)$-dimensional, such Type II degenerations should lie along codimension $1$ loci in an appropriate compactification. We would thus get a decomposition of the $20-\ell$ deformations of $V$ into contributions $(n_i -1)$ from deformations of each $X_i$, a contribution $1$ from deformations of $Z$, and $1$ for the codimension in the moduli space.

Now we look at the mirror picture. As proved in Proposition \ref{prop:cuspfibration}, the Type II degeneration of $V$ given above should correspond to an elliptic fibration $\pi\colon W \to \PP^1$ on the mirror $W$. We suggest that the decomposition $X_1 \cup_Z X_2 \cup_Z \cdots \cup_Z X_k$ of $V$ corresponds to a ``slicing'' of the $\PP^1$ base of $\pi$, so that each $X_i$ corresponds to a slice $S_i$, as follows. $S_1$ is a disc, which is glued along its boundary to one of the boundaries of an annulus $S_2$. The other boundary of the annulus $S_2$ is then glued to one of the boundaries of an annulus $S_3$, and so on, until the remaining boundary of the annulus $S_{k-1}$ is glued to a disc $S_k$.

The singular fibres of $\pi$ should then be apportioned amoungst the slices as follows. If $\rho_p$ denotes the number of components in the fibre of $\pi$ over $p$, then the slicing should satisfy
\[n_i - 1 = \sum_{p \in S_i} (\rho_p -1).\]
This gives a decomposition of the Picard rank $(20-\ell)$ of $W$ into contributions $n_i -1$ from the singular fibres lying on the slice $S_i$, a contribution $1$ from the class of a section, and a contribution $1$ from the class of a fibre.

Note that this is completely compatible with the Tyurin degeneration picture presented in Section \ref{sect:setup} and the Batyrev-Borisov picture of Conjecture \ref{conj:K3mirror}. Indeed, in the Tyurin degeneration picture the slicing has two pieces $S_1$ and $S_2$, which are the LG models of the two components $X_1$ and $X_2$ of the degenerate fibre $X_1 \cup_Z X_2$. Furthermore, in the setting of Conjecture 3.20 we also have a slicing into two pieces $S_1$ and $S_2$, one of which contains the point $[1:0]$ and the other of which contains $[1:1]$. Which slice the point $[0:1]$ falls into depends upon the choice of blow-up of $V \cap X_1 \cap X_2$: if we blow up $V \cap X_1 \cap X_2$ in $X_i$ (for $i \in \{1,2\}$), then the corresponding $I_n$ fibre appears in the slice $S_i$.

Finally, we describe how this slicing picture works in the $H$-polarized and degree $2$ cases considered above. In the $H$-polarized case we have the following two possibilities, numbered compatibly with Section \ref{sec:Hpol}. The labelling of the slices is chosen so that the slice corresponding to the component $X_i^j$ is labelled $S_i^j$.
\begin{enumerate}
\item $\PP^1$ is sliced into two pieces $S_1^1$ and $S_2^1$, such that $S_1^1$ contains the $I_{12}^*$ fibre.
\item $\PP^1$ is sliced into two pieces $S_1^2$ and $S_2^2$, each of which contains a $II^*$ fibre.
\end{enumerate}

Moreover, in the degree two case we have the following four possibilities, numbered compatibly with Section \ref{sec:deg2}.
\begin{enumerate}
\item $\PP^1$ is sliced into two pieces $S_1^1$ and $S_2^1$, such that $S_1^1$ contains the $I_{12}^*$ fibre and $S_2^1$ contains the $I_2$.
\item $\PP^1$ is sliced into two pieces $S_1^2$ and $S_2^2$, such that $S_1^2$ contains the $I_{6}^*$ fibre and $S_2^2$ contains the $III^*$.
\item In this case we have two choices of slicing, corresponding to the two types of degeneration. For the degeneration given by the nef partition and Batyrev mirror symmetry, $\PP^1$ is sliced into two pieces $S_1$ and $S_2$, one of which contains the $I_{18}$ fibre. For the KSBA degeneration, we have a ``degenerate'' slicing of $\PP^1$ into a single piece, which contains all singular fibres.
\item In the final case we also have two choices of slicing, corresponding to the two types of degeneration. In the case where we blow up a pair of points, $\PP^1$ is sliced into two pieces $S_1^4$ and $S_2^4$, such that $S_1^4$ contains the $II^*$ fibre and the $I_2$ fibre, and $S_2^4$ contains the other $II^*$. In the case where we have three components in the degeneration, $\PP^1$ is sliced into three pieces $S_1^4$, $S_3^4$ and $S_2^4$, such that $S_1^4$ and $S_2^4$ are discs containing one $II^*$ fibre each, and $S_4^3$ is an annulus containing the $I_2$ fibre.
\end{enumerate}

\section{Beyond Batyrev-Borisov mirror symmetry for threefolds} \label{sec:beyond3folds}

The aim of this section is to provide some evidence that the ideas presented in Section \ref{sect:setup} also hold for threefolds outside the toric setting considered in Section \ref{sec:BBduality}. We begin by showing that classical mirror symmetry suggests a correspondence between Tyurin degenerations and K3 fibrations on mirror dual pairs of Calabi-Yau threefolds. This should be thought of as a threefold analogue of Proposition \ref{prop:cuspfibration}. Then we specialize our discussion to the case of threefolds fibred by mirror quartics, as studied in \cite{cytfmqk3s}, and show that, in that setting, the correspondence predicted by classical mirror symmetry is consistent with the construction presented in Section \ref{sect:setup}.

\subsection{Classical mirror symmetry for threefolds}
\label{sect:MScones}

Classical mirror symmetry predicts that if $V$ and $W$ are mirror dual Calabi-Yau threefolds, then there is a relation between monodromy operators acting on $H^3(V,\mathbb{Q})$ and divisors in the closure of the K\"ahler cone of $W$. We will briefly sketch some of the details of this relationship here, the interested reader may find more details in \cite[Chapter 6]{msag}.

Suppose that $\calV \rightarrow (\Delta^*)^n$ is a family of Calabi-Yau threefolds over the punctured polydisc, with fibre $\calV_t = V$ above some $t \in (\Delta^*)^n$. For each  $i \in \{1,\ldots,n\}$, let $T_i$ be the unipotent monodromy operator acting on $H^3(V,\mathbb{Q})$ coming from the loop $(t_1,\dots,t_{i-1}, e^{2\pi i t},t_{i+1},\dots,t_n)$, where $(t_1,\dots, t_{i-1},t_{i+1},\dots,t_n)$ are fixed constants, and let $N_i = \log(T_i)$. The family $\calV$ is said to have \emph{maximally unipotent monodromy} at $(0,\dots,0)$ if 
\begin{enumerate}
\item for any $n$-tuple $(a_1,\dots,a_n)$ of positive integers, the weight filtration $W_{\bullet}$ on $H^3(V,\mathbb{Q})$ induced by $\sum_{i=1}^n a_i N_i$ has $\dim W_0 = \dim W_1 =1$ and $\dim W_2 = n +1$, and
\item if $g_0,\dots, g_n$ is a basis of $W_2$ chosen so that $g_0$ spans $W_0$, and $m_{ij}$ are defined by $N_i g_j = m_{ij}g_0$, then the matrix $(m_{ij})$ is invertible.
\end{enumerate}

If $\calV$ has maximally unipotent monodromy, then mirror symmetry should produce a map which assigns to each $N_i$ a divisor $D_i$ in the closure of the K\"ahler cone of $W$. Moreover, there should be an identification under mirror symmetry which gives an isomorphism $H^{3-i,i}(V) \cong H^{i,i}(W)$, and hence an isomorphism $H^{3}(V,\mathbb{C}) \cong \bigoplus_{i=0}^3 H^{i,i}(W)$, so that the action of $N_i$ on $H^3(V,\mathbb{C})$ agrees with the action of the cup product operator $J_i(-) = (-)\cup [-D_i]$ under this correspondence. Thus, for any $n$-tuple $(a_1,\ldots,a_n)$ of non-negative integers, the weight filtration  on $H^3(V,\mathbb{C})$ induced by $N := \sum_{i=1}^n a_iN_i$ should be mirrored by the filtration on $\bigoplus_{i=0}^3 H^{i,i}(W)$ induced by $J:= \sum_{i=1}^n a_iJ_i$, and the limit Hodge decomposition should correspond to the decomposition $\bigoplus_{i=0}^3H^{i,i}(W)$.

Now we specialize this discussion to the case of a Tyurin degeneration of Calabi-Yau threefolds $\calV \rightarrow \Delta$. As in the previous sections, we write the central fibre of $\calV$ as $X_1 \cup_Z X_2$ and let $V$ denote a general fibre. Let $T$ be the monodromy operator acting on $H^3(V,\QQ)$ associated to a counterclockwise loop around $0$. In order to apply the predictions of mirror symmetry, we assume that $T$ may be identified with a loop $\prod_{i=1}^n T_i^{a_i}$ around a point of maximally unipotent monodromy in the complex moduli space of $V$, where $T_i$ are as above and $a_i$ are non-negative integers. Define $N := \log(T) = \sum_{i=1}^n a_iN_i$. We will use the Clemens-Schmid exact sequence associated to $N$ to compute the limit mixed Hodge structure on $H^3(V)$, then see what this allows us to deduce about the mirror threefold $W$. 

\begin{remark} We note that the Tyurin degeneration $\calV$ cannot have maximally unipotent monodromy, for purely topological reasons (see, for instance, \cite[Corollary 2]{csesa}), so $T$ must correspond to a loop around some positive-dimensional boundary component of the compactified complex moduli space of $V$. In particular, this implies that some of the $a_i$ must be zero.
\end{remark}

We begin by looking at the mixed Hodge structure on $H^3(\calV)$ given by Griffiths and Schmid \cite[Section 4]{rdhtdtr}. The weight filtration $W_{\bullet}$ on $H^3(\calV,\mathbb{Q})$ has
\[\Gr^W_3 = H^3(X_1,\mathbb{Q}) \oplus H^3(X_2,\mathbb{Q})\]
and, if $r_1$ and $r_2$ are the restriction maps $r_i\colon H^2(X_i,\mathbb{Q}) \rightarrow H^2(Z,\mathbb{Q})$, then
\[\Gr^W_{2} = H^2(Z,\mathbb{Q}) / (\mathrm{im}(r_1) + \mathrm{im}(r_2)).\]
These weight graded pieces are then equipped with the appropriate Hodge filtrations. Define integers $u := \rank( \Gr_2^W) -2$ and $v := \frac{1}{2} \rank (\Gr_3^W)$. Noting that $K_{X_i}$ is anti-effective, so that $h^{3,0}(X_i) = 0$, we see that $v = h^{2,1}(X_1) + h^{2,1}(X_2)$. 

The Clemens-Schmid exact sequence gives us an exact sequence of mixed Hodge structures 
\[\cdots \longrightarrow H_5(\calV) \longrightarrow H^3(\calV) \stackrel{i^*}{\longrightarrow} H^3_\mathrm{lim}(V) \stackrel{N}{\longrightarrow} H^3_\mathrm{lim}(V) \longrightarrow H_3(\calV) \longrightarrow \cdots \]
where $i^*$ is the pull-back on cohomology induced by the inclusion $i\colon V \hookrightarrow \calV$.

\begin{lemma} $H_5(\calV) = 0$, so the map $i^*$ is an injection.\end{lemma}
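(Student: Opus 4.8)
The plan is to reduce the computation of $H_5(\calV)$ to a homology computation on the central fibre, and then to a single injectivity statement. First I would use the fact that a degeneration over the disc deformation retracts onto its central fibre; this is the same retraction underlying the Clemens--Schmid package, and it gives $H_5(\calV,\QQ) \cong H_5(X_1 \cup_Z X_2,\QQ)$. The problem thus becomes the computation of the fifth homology of the normal crossings threefold $X_1 \cup_Z X_2$.

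To compute this I would apply the Mayer--Vietoris sequence for the decomposition $X_1 \cup_Z X_2$ with $X_1 \cap X_2 = Z$, namely
\[\cdots \to H_5(Z) \to H_5(X_1) \oplus H_5(X_2) \to H_5(X_1 \cup_Z X_2) \to H_4(Z) \xrightarrow{\phi} H_4(X_1) \oplus H_4(X_2) \to \cdots.\]
Two of the left-hand groups vanish for elementary reasons: $Z$ is a K3 surface, so $H_5(Z) = 0$ for dimension reasons; and each $X_i$ is a smooth projective threefold, so Poincar\'e duality gives $H_5(X_i,\QQ) \cong H^1(X_i,\QQ)$, which vanishes because the quasi-Fano condition $H^1(X_i,\calO_{X_i}) = 0$ forces $H^1(X_i,\CC) = H^{1,0} \oplus H^{0,1} = 0$. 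Consequently $H_5(X_1 \cup_Z X_2)$ is isomorphic to $\ker(\phi)$.

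It then remains to show that $\phi$ is injective. Since $H_4(Z,\QQ) = \QQ\cdot[Z]$ is generated by the fundamental class of the connected oriented $4$-manifold $Z$, and $\phi([Z]) = (\iota_{1*}[Z],\ \pm\,\iota_{2*}[Z])$ where $\iota_i\colon Z \hookrightarrow X_i$ are the inclusions, it suffices to show that $\iota_{1*}[Z]$ is nonzero in $H_4(X_1,\QQ)$. I would prove this by intersection theory: choosing an ample class $H$ on the projective threefold $X_1$ and working under Poincar\'e duality $H_4(X_1,\QQ) \cong H^2(X_1,\QQ)$, the pushforward $\iota_{1*}[Z]$ is dual to the divisor class $[Z] = c_1(\calO_{X_1}(Z)) \in H^2(X_1,\QQ)$, and the projection formula gives $[Z] \cdot H^2 = (H|_Z)^2 > 0$ because $H|_Z$ is an ample class on the surface $Z$. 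Hence $\iota_{1*}[Z] \neq 0$, so $\phi$ is injective, $\ker(\phi) = 0$, and $H_5(\calV) = 0$. The injectivity of $i^*$ is then immediate from exactness of the Clemens--Schmid sequence, since the term preceding $i^*$ is $H_5(\calV) = 0$.

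The step I expect to require the most care is the last one: one must ensure that the fundamental class of the anticanonical divisor $Z$ does not die in $H_4(X_i)$. Restriction to $Z$ is inadequate for detecting this, since under the smoothing hypothesis $N_{Z/X_1} \otimes N_{Z/X_2} = \calO_Z$ the normal bundle $N_{Z/X_i}$ may well be trivial; this is precisely why I would argue through intersection with an ample class on $X_i$ rather than through adjunction on $Z$. The remaining ingredients (the retraction, the vanishing of $H_5(Z)$ and $H_5(X_i)$, and the exactness arguments) are standard and formal.
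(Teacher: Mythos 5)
Your proof is correct and follows essentially the same route as the paper: retract $\calV$ onto the central fibre, apply Mayer--Vietoris to $X_1 \cup_Z X_2$, kill $H_5(X_i)$ by Poincar\'e duality and $h^{0,1}(X_i)=0$, and conclude by injectivity of $H_4(Z) \to H_4(X_1)\oplus H_4(X_2)$ on the generator $[Z]$. The only difference is that you spell out, via intersection with the square of an ample class, the nontriviality of $\iota_{1*}[Z]$, which the paper merely asserts from effectiveness of the anticanonical divisor --- a worthwhile clarification, but not a different argument.
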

\begin{proof} The Mayer-Vietoris sequence for $X_1 \cup_ZX_2$ gives 
\[\cdots \longrightarrow H_5(X_1) \oplus H_5(X_2) \longrightarrow H_5(\calV) \longrightarrow H_4(Z) \stackrel{\alpha}{\longrightarrow} H_4(X_1) \oplus H_4(X_2) \longrightarrow \cdots,\]
where the map $\alpha$ is induced by the inclusions $Z \hookrightarrow X_i$.

Now, $H_5(X_1) \oplus H_5(X_2)$ vanishes by Poincar\'{e} duality and the assumption that $h^{0,1}(X_i) = 0$. Moreover, as $Z$ is an effective anticanonical divisor in both $X_1$ and $X_2$, the image of the class $[Z] \in H_4(Z)$ of $Z$ under $\alpha$ is non-trivial. But $[Z]$ generates $H_4(Z)$, so $\alpha$ must be injective. Thus the sequence above gives $H_5(\calV) = 0$.
\end{proof}

Applying this lemma and some standard results on the Clemens-Schmid exact sequence (see, for instance, \cite{csesa}) we obtain the following limit mixed Hodge structure on $H^3(V)$
\[\begin{array}{rcccc}
& \Gr_F^3 & \Gr_F^2 & \Gr_F^1 & \Gr_F^0 \\
\Gr^M_4 &\mathbb{C} & \mathbb{C}^u & \mathbb{C} & 0 \\
\Gr^M_3 &0 & \mathbb{C}^v & \mathbb{C}^v &0  \\
\Gr^M_2 & 0 & \mathbb{C} & \mathbb{C}^u & \CC
\end{array}\]
where $M_{\bullet}$ is the monodromy weight filtration induced by $N$ and $F^{\bullet}$ is the limit Hodge filtration. 

Therefore, the divisor $D = \sum_{i=1}^na_iD_i$ on $W$ which corresponds to $N$ under mirror symmetry should have 
\[\begin{array}{rcccc}
& H^{0,0}(Y^\vee) & H^{1,1}(W) & H^{2,2}(W) & H^{3,3}(W) \\
\mathrm{coimage}(J) & \CC & \mathbb{C}^u & \mathbb{C} & 0 \\
\ker(J)/ \mathrm{im}(J) &0 & \mathbb{C}^v & \mathbb{C}^v &0  \\
\mathrm{im}(J) & 0 & \mathbb{C} & \mathbb{C}^u & \CC
\end{array}\]
where, as before,  $J(-) = (-)\cup [-D]$ denotes the cup-product operator. In particular, we see that $J^2 = 0$. Since $D$ is in the closure of the K\"ahler cone of $W$, results of Oguiso \cite[Example 2.3]{ffsscy3f} show that $mD$ is the class of a fibre in a fibration of $W$ by K3 or abelian surfaces, for some positive integer $m$.

\begin{remark} Based on the ideas in the previous sections, we conjecture that $mD$ will always be the class of fibre in a K3 fibration on $W$. Oguiso \cite[Example 2.3]{ffsscy3f} gives a simple criterion to test for this: $mD$ defines a K3 fibration on $W$ if and only if $c_2(W) \cdot D >0$.
\end{remark}

In light of this remark, we will assume  throughout the remainder of this section that $mD$ defines a K3 fibration on $W$. Then the calculation above also shows that the classes in $\Pic(W)$ supported on fibres span a $v+1$ dimensional subspace, where one of these classes is $mD$ itself. Moreover, there is a rank $u$ subspace of $\Pic(W)$ with $J(\eta) \neq 0$ for each class $\eta\neq 0$ in this subspace. By the global invariant cycles theorem, classes in this second subspace come from monodromy invariant cycles on fibres of the K3-fibration on $W$. Thus the K3 surface fibration on $W$ induced by $mD$ is $\check{L}$-polarized (in the sense of \cite[Definition 2.1]{flpk3sm}), for some lattice $\check{L}$ of rank $u$.

Therefore we see that, if $V$ admits a Tyurin degeneration to a union of threefolds $X_1 \cup_Z X_2$, and if restriction of divisors from $X_1$ and $X_2$ induces a lattice polarization of $Z$ by a lattice ${L}$ of rank $20 - u$, then we expect the mirror $W$ to admit an $\check{L}$-polarized K3 surface fibration, for some lattice $\check{L}$ of rank $u$. Moreover, the space of divisors in $W$ that are supported on fibres of the fibration should have rank $v+1 = h^{2,1}(Y_1) + h^{2,1}(Y_2) + 1$. Note that this is completely consistent with the predictions of Section \ref{sec:threefold}.

\subsection{Threefolds fibred by mirror quartics} \label{sec:mirrorquartics} Our next aim is to demonstrate how this works in a special case: that of threefolds fibred by mirror quartic K3 surfaces. As we will see, in this setting the predictions of classical mirror symmetry, described above, mesh perfectly with the construction presented in Section \ref{sect:setup}.

A detailed study of threefolds fibred by mirror quartic K3 surfaces was conducted in \cite{cytfmqk3s}. We begin by briefly recapping the main construction and results of that paper, before describing how it fits into our picture.

The goal of \cite{cytfmqk3s} is to answer the following question: let $W$ be a Calabi-Yau threefold and assume that $W$ admits a fibration over $\mathbb{P}^1$ by K3 surfaces, $\pi\colon W \rightarrow \mathbb{P}^1$. Assume that the general fibre of $\pi$ is a K3 surface $S$ with  $\mathrm{Pic}(S) = M_2$, for $M_2 := H \oplus E_8 \oplus E_8 \oplus \langle -4 \rangle$ (i.e. $S$ is Dolgachev-Nikulin mirror to a quartic hypersurface in $\mathbb{P}^3$), and that the monodromy representation acts trivially on $M_2$. Such a structure is called an \emph{$M_2$-polarized K3 fibration} on $W$. In \cite{cytfmqk3s} we attempted to classify Calabi-Yau threefolds admitting $M_2$-polarized K3 fibrations.

In order to describe this classification, we start by taking a basic family, called $\mathcal{X}$ in \cite{cytfmqk3s}, which is a smooth resolution of the family of hypersurfaces
\[\{\lambda w^4 + xyz(x+y+z-w)= 0\} \subset \PP^3,\]
for $\lambda \neq 1/256,0$. This defines a smooth family of K3 surfaces over $\mathbb{P}^1 \setminus \{0,1/256,\infty\}$. The classification of $M_2$-polarized fibrations from \cite{cytfmqk3s} can then be stated as:

\begin{theorem} \cite[Section 2]{cytfmqk3s}
If $\pi\colon W \rightarrow \mathbb{P}^1$ is an $M_2$-polarized K3 fibration on a Calabi-Yau threefold $W$, then there is a map
\[g \colon \mathbb{P}^1 \rightarrow \mathbb{P}^1\]
so that $W$ is birational to $g^*\mathcal{X}$ where
\[\xymatrix{
g^*\mathcal{X} \ar[d] \ar[r] &\mathcal{X} \ar[d] \\
U  \ar[r]^(.3){g|_U}       & \mathbb{P}^1\setminus \{0,1/256, \infty\}}\]
and $U = g^{-1}(\mathbb{P}^1 \setminus \{0,1/256,\infty\})$. 

Moreover, the preimage of $0$ under $g$ consists of either 1 or 2 points. If $g^{-1}(0)$ is 2 points, then $g$ is ramified to order $1,2$ or $4$ at each point in $g^{-1}(0)$. If $g^{-1}(0)$ is a single point, then $g$ is ramified to order $8$ at that point. If $g$ is unramified over $1/256$ then $W$ is smooth, but if $g$ ramifies over $1/256$ then $W$ may have isolated singularities.
\end{theorem}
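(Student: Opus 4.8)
The plan is to prove the theorem in two stages: a moduli-theoretic stage that produces the map $g$ and the birational equivalence $W \sim g^{*}\calX$, followed by a local analysis at the three distinguished fibres $\lambda \in \{0, 1/256, \infty\}$ carried out under the constraint that $W$ be Calabi-Yau.

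For the first stage, the key point is that $\calX$ is, up to the usual coarse-moduli subtleties, the universal family of $M_2$-polarized K3 surfaces. Since $M_2$ has rank $19$, its orthogonal complement $T := M_2^{\perp}$ in $\Lambda_{\mathrm{K3}}$ has rank $3$ and signature $(2,1)$, so the corresponding period domain is one-dimensional and the coarse moduli space $\overline{\calM}$ of pseudo-ample $M_2$-polarized K3 surfaces is a genus-zero modular curve, i.e. a $\PP^1$ carrying a cusp and two elliptic points, realized by the special fibres of $\calX$ over $0$, $\infty$ and $1/256$. Using the surjectivity of the period map together with the Torelli theorem for lattice-polarized K3 surfaces, I would verify that over $\PP^1 \setminus \{0,1/256,\infty\}$ the family $\calX$ realizes each $M_2$-polarized period exactly once. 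Given an $M_2$-polarized K3 fibration $\pi\colon W \to \PP^1$, the hypothesis that monodromy acts trivially on $M_2$ guarantees that the transcendental variation of Hodge structure of $\pi$ has a single-valued period map on the smooth locus of $\pi$; composing with the isomorphism $\overline{\calM} \cong \PP^1$ yields the map $g$, with $U = g^{-1}(\PP^1\setminus\{0,1/256,\infty\})$ the locus of smooth $M_2$-polarized fibres. By Torelli the fibre of $\pi$ over $u \in U$ is isomorphic, as an $M_2$-polarized K3 surface, to the fibre of $\calX$ over $g(u)$, so $W$ and $g^{*}\calX$ have canonically identified general fibres and are therefore birational. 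This gives the first assertion and the commuting square.

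The ramification statements are then local computations over the special values, governed by the canonical bundle formula $K_W = \pi^{*}(K_{\PP^1} + \mathcal{L} + \Delta)$ for the K3-fibred threefold and the requirement that $K_W = 0$. Over $\lambda = 1/256$ the general member of $\calX$ acquires a single ordinary double point while the total space of $\calX$ stays smooth; in a local analytic chart the family is $x^2 + y^2 + z^2 = \lambda - 1/256$. If $g$ is unramified over $1/256$ then $g^{*}\calX$ inherits this smooth local model and $W$ is smooth there; if instead $g$ ramifies to order $r$, the pullback is $x^2 + y^2 + z^2 = t^{r}$, an isolated compound Du Val ($cA_{r-1}$) threefold singularity, which is Gorenstein and canonical and hence compatible with the Calabi-Yau condition but forces an isolated singular point of $W$. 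This yields the smoothness dichotomy over $1/256$.

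The delicate case, and the step I expect to be the main obstacle, is $\lambda = 0$, where the fibre of $\calX$ is maximally degenerate -- a tetrahedron of four planes, a type III Kulikov-type degeneration. Here I would take explicit local (toric) models of $\calX$ along its central fibre, perform a semistable reduction of the pullback, and determine for which ramification orders $k$ at a point $p \in g^{-1}(0)$ the resulting threefold $g^{*}\calX$ has at worst Gorenstein canonical singularities admitting a crepant resolution with trivial canonical bundle; only then is the birational model $W$ a genuine Calabi-Yau threefold. The content of the theorem is that this holds exactly for the listed ramification behaviours -- a single point of ramification order $8$, or two points with orders in $\{1,2,4\}$ -- the integer $8$ being the local combinatorial invariant of the degeneration of $\calX$ at $\lambda = 0$ that controls when the pulled-back model is crepant-resolvable. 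The genuinely technical work is the case analysis of these local models, the verification that the admissible orders are exactly $1,2,4,8$, and the global constraint forcing $g^{-1}(0)$ to consist of one or two points.
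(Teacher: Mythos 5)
The paper itself contains no proof of this statement: it is imported verbatim from \cite{cytfmqk3s} as background, so your proposal can only be judged against the strategy of that cited source. Your first stage is broadly the right outline (the period map to the one-dimensional moduli space of $M_2$-polarized K3 surfaces is the ``generalized functional invariant'' $g$, and Torelli identifies fibres), but it contains a real gap at the final inference: knowing that the fibre of $\pi$ over each $u \in U$ is abstractly isomorphic to the fibre of $\calX$ over $g(u)$ does \emph{not} make $W$ and $g^*\calX$ birational --- two fibrations can be fibrewise isomorphic without the isomorphisms gluing, since the families may differ by a twist. Making the identification canonical is exactly the content of the earlier Doran--Harder--Novoseltsev--Thompson results on families of lattice polarized K3 surfaces that \cite{cytfmqk3s} invokes: one needs the trivial-monodromy hypothesis there (not merely for single-valuedness of the period map, which holds for any map to a coarse space), together with lattice-theoretic input ensuring no automorphism of a generic fibre acts trivially on $M_2$ (e.g.\ the candidate acting as $-1$ on the transcendental lattice is killed by the discriminant group $\ZZ/4$ of $M_2$). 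Your proposal asserts ``canonically identified'' without verifying this uniqueness, which is where the actual work lies.

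The second, more serious, problem is that you misidentify the geometry at $\lambda = 0$. You describe it as the maximally degenerate Type III Kulikov point and plan a semistable reduction there; but the Type III cusp of the $M_2$-moduli curve sits at $\lambda = \infty$ --- this is visible in the paper's own recap, where fibres over $g^{-1}(\infty)$ are semistable of type $\mathrm{I}_n^\Delta$ with component count growing with the ramification order $n$ (unipotent monodromy, and no constraint on the ramification profile $\mu$), whereas $\lambda = 0$ is an orbifold point of order $4$: the local monodromy of $\calX$ there is finite of order $4$, the resolved fibre is the non-reduced type $\mathrm{IV}$ configuration with a multiplicity-$4$ component, base change of order $2$ yields type $\mathrm{II}$, and base change of order $4$ yields a \emph{smooth} fibre $\mathrm{I}_0$. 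Because of this, your purely local criterion (``which ramification orders $k$ give crepant-resolvable pullbacks'') cannot deliver the theorem: locally, any number of points of $g^{-1}(0)$ with orders in $\{1,2,4\}$ would be admissible, so nothing in your argument forces $\# g^{-1}(0) \leq 2$, nor order $8$ only in the single-point case. These are \emph{global} constraints coming from triviality of $K_W$ --- equivalently, for a K3-fibred Calabi-Yau threefold one has $R^2\pi_*\calO_W \cong \calO_{\PP^1}(-2)$, and a degree count over $\PP^1$, in which the fibres over $g^{-1}(\infty)$ and $g^{-1}(1/256)$ contribute nothing while each point over $0$ contributes positively, is what pins down exactly the listed configurations. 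Relatedly, $8$ is not ``the local combinatorial invariant of $\calX$ at $\lambda=0$'' --- the local invariant is $4$; the order-$8$ case arises globally as a degeneration of the two-point case $i = j = 4$ (cf.\ Remark 3.1 of \cite{cytfmqk3s}, quoted in this paper). Your treatment of $\lambda = 1/256$ (smooth local model $x^2+y^2+z^2 = \lambda - 1/256$, pulling back to an isolated compound Du Val point $x^2+y^2+z^2 = t^r$) is correct and consistent with the statement.
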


Using this, in \cite[Section 4]{cytfmqk3s} we obtained a classification of smooth deformation equivalent families of such Calabi-Yau threefolds, under the assumption that $g$ is unramified over $1/256$ (which, by the theorem above, ensures that our Calabi-Yau threefolds will be smooth). This classification is determined by two pieces of data: a pair of numbers $i$ and $j$ in $\{1,2,4\}$, denoting the orders of ramification of $g$ over $0$, and a choice of partition $\mu := [x_1,\dots, x_k]$ of $\deg(g) = i+j$, denoting the ramification profile of $g$ over $\infty$. Note here that the case where $g^{-1}(0)$ is a single point is a deformation of the case where $i=j=4$, so may be ignored; see \cite[Remark 3.1]{cytfmqk3s}. We call a general member of this family $W^{\mu}_{i,j}$. 

\begin{theorem} \label{thm:Whodgenumbers} \cite[Propositions 3.5, 3.8]{cytfmqk3s}
If $W^{\mu}_{i,j}$ is as above, then
\begin{enumerate}
\item $h^{2,1}(W^{\mu}_{i,j}) = k$,
\item $h^{1,1}(W^{\mu}_{i,j}) = 20 + \sum_{s =1}^k (2x_{s}^2 + 1) + c_i + c_j$.
\end{enumerate} 
where $c_1 = 30$, $c_2 = 10$ and $c_4 = 0$.
\end{theorem}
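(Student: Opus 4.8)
The plan is to read off both Hodge numbers directly from the $M_2$-polarized K3 fibration $\pi\colon W^{\mu}_{i,j}\to\mathbb{P}^1$: the number $h^{1,1}$ will come from a component count of the singular fibres via a Shioda--Tate-type formula, while $h^{2,1}$ will come from the moduli of the defining branched cover $g$. For $h^{1,1}$ the main tool is \cite[Lemma 3.2]{cytfmqk3s}, which in the present notation reads
\[h^{1,1}(W) = \sum_{p\in\mathbb{P}^1}(\rho_p-1) + \ell + 1,\]
where $\rho_p$ is the number of irreducible components of the fibre over $p$ and $\ell$ is the rank of the image of the restriction map $H^2(W,\mathbb{Z})\to H^2(S,\mathbb{Z})$ to a general fibre $S$. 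Since $S$ is $M_2$-polarized with $\mathrm{Pic}(S)=M_2$ of rank $19$, and the monodromy acts trivially on $M_2$ but non-trivially on the rank-$3$ transcendental lattice, the monodromy-invariant part of $H^2(S,\mathbb{Q})$ is exactly $M_2\otimes\mathbb{Q}$; the global invariant cycle theorem then gives $\ell = 19$. Thus the ``horizontal'' contribution $\ell+1$ accounts for the term $20$ in the formula, the extra $1$ being the class of a fibre.

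Everything then reduces to the component counts $\rho_p$ of the singular fibres, and this is where the work lies. First I would determine the three special fibres of the basic family $\mathcal{X}\to\mathbb{P}^1$ (passing to a semistable model near $0$ and $\infty$): over $1/256$ the fibre is a K3 surface acquiring a single node, over $0$ it degenerates from the union of four planes $xyz(x+y+z-w)=0$, and over $\infty$ from the quadruple plane $w^4=0$. Pulling back along $g$ and resolving, the fibres of $W$ over $g^{-1}(1/256)$ stay irreducible (contributing $0$ to the sum, though they affect the Euler number), while each point of $g^{-1}(\infty)$ of ramification index $x_s$ and each point of $g^{-1}(0)$ of ramification index $i$ (resp. $j$) produces a reducible fibre. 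The assertion of the theorem is that these contribute exactly $2x_s^2+1$ and $c_i$ (resp. $c_j$) to $\sum_p(\rho_p-1)$, with $c_1=30$, $c_2=10$, $c_4=0$. Summing over all of $g^{-1}(0)$ and $g^{-1}(\infty)$ then gives the stated formula for $h^{1,1}$.

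For $h^{2,1}$ I would argue that, by the classification theorem above, every complex deformation of $W$ preserves the $M_2$-polarized fibration and hence corresponds to a deformation of the cover $g\colon\mathbb{P}^1\to\mathbb{P}^1$ modulo reparametrization of the source. By Riemann--Hurwitz the total ramification of $g$ is $2\deg(g)-2 = 2(i+j)-2$; the prescribed ramification over $0$ accounts for $(i-1)+(j-1)$ and that over $\infty$ for $\sum_s(x_s-1) = (i+j)-k$, leaving exactly $k$ further simple branch points free to move in $\mathbb{P}^1\setminus\{0,1/256,\infty\}$. Since the three points $0,1/256,\infty$ are rigid in the target and a generic such cover has no nontrivial automorphisms, the moduli of $g$, and hence $h^{2,1}(W)$, equals $k$. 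As an independent check one can instead compute $\chi(W)$ by summing the Euler numbers of the fibres (a general K3 fibre contributes $24$, plus the corrections from the singular fibres found above) and recover $h^{2,1} = h^{1,1} - \tfrac{1}{2}\chi(W)$.

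The principal obstacle is the explicit local analysis at the fibres over $g^{-1}(0)$ and $g^{-1}(\infty)$: after the order-$x_s$ (resp. order-$i$) base change these fibres acquire quotient singularities, and one must carry out an equivariant or toric resolution to count components precisely. This is what produces the quadratic growth $2x_s^2$ over $\infty$ and the specific constants $c_i$ over $0$, in particular the vanishing $c_4=0$, which reflects that at maximal ramification over $0$ the resolved fibre becomes irreducible. Getting these resolutions and their combinatorics exactly right---rather than the bookkeeping in the $h^{1,1}$ formula or the Hurwitz count for $h^{2,1}$---is the crux of the argument.
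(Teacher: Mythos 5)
First, note that this statement is not proved in the paper at all: it is imported verbatim from \cite[Propositions 3.5, 3.8]{cytfmqk3s}, so your proposal can only be measured against the cited source and against the data this paper quotes from it. For $h^{1,1}$ your reconstruction is correct and is essentially the cited argument: \cite[Lemma 3.2]{cytfmqk3s} gives $h^{1,1}(W) = \sum_p(\rho_p-1)+\ell+1$, the $M_2$-polarization with trivial monodromy on $M_2$ (and non-trivial, irreducible monodromy on the rank-$3$ transcendental part) gives $\ell = \rank(M_2) = 19$ via the global invariant cycles theorem, and the component counts you flag as ``the crux'' are exactly the content of \cite[Proposition 2.5]{cytfmqk3s}, already quoted in Section \ref{sec:mirrorquartics} of this paper: $\mathrm{I}_n^{\Delta}$ has $2n^2+2$ components (giving $2x_s^2+1$ per point of $g^{-1}(\infty)$), type $\mathrm{IV}$ has $31$ (giving $c_1=30$), type $\mathrm{II}$ has $11$ (giving $c_2=10$), type $\mathrm{I}_0$ is smooth (giving $c_4=0$), and $\mathrm{I}_{\mathrm{odp}}$ is irreducible. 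So the equivariant resolution analysis you propose to carry out need not be redone; it is an input, not the obstacle.

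The genuine gap is in your primary argument for $h^{2,1}=k$. The classification theorem you invoke says that a Calabi--Yau threefold \emph{already equipped} with an $M_2$-polarized K3 fibration is birational to some $g^*\mathcal{X}$; it does not say that every small complex deformation of $W^{\mu}_{i,j}$ carries such a fibration. To run your argument you would need (a) persistence of the fibration and of the $M_2$-polarization under deformation (this requires something like Wilson's results on stability of the K\"ahler cone, plus the observation that the $19$ polarizing classes extend because they are restrictions of classes on $W$), and (b) that the resulting map from the $k$-dimensional Hurwitz space of covers $g$ to the Kuranishi space of $W$ is a local isomorphism -- neither is automatic, and the identification of moduli of $W$ with moduli of $g$ is established in \cite[Section 4]{cytfmqk3s} \emph{using} the Hodge number computations of Propositions 3.5 and 3.8, so arguing in your direction risks circularity. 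Your Riemann--Hurwitz bookkeeping itself is correct ($2(i+j)-2 = (i-1)+(j-1)+\sum_s(x_s-1)+k$, leaving $k$ movable simple branch points), but it only computes the dimension of the Hurwitz space, not $h^{2,1}(W)$. The sound route is the one you relegate to an ``independent check'': compute $\chi(W)$ fibrewise and use $h^{2,1}=h^{1,1}-\tfrac{1}{2}\chi(W)$. For this to close you must actually compute the Euler numbers of the singular fibres -- one needs $\chi(\mathrm{I}_n^{\Delta}) = 4n^2+24$, $\chi(\mathrm{I}_{\mathrm{odp}})=23$ at each of the $i+j$ points of $g^{-1}(1/256)$, and the corresponding values for the $\mathrm{II}$ and $\mathrm{IV}$ fibres, while the $k$ extra simple ramification points contribute nothing since the pullback of a smooth morphism is smooth there -- and you leave all of these uncomputed. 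With those numbers supplied the check is consistent with both formulas, so your outline is repairable, but as written the $h^{2,1}$ claim rests on an unjustified (and possibly circular) deformation-theoretic step.
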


Finally, the proof of \cite[Proposition 2.5]{cytfmqk3s} classifies the singular fibres of these threefolds, up to small birational transformations. We give names to each possibility and a description of one member of each equivalence class.
\begin{enumerate}
\item $\mathrm{I}_0$: A smooth K3 surface. Along with the generic fibre, which clearly has type $\mathrm{I}_0$, if $t \in g^{-1}(0)$ and $g$ ramifies to order $4$ at $t$, then the fibre over $t$ is of type $\mathrm{I}_0$.
\item $\mathrm{I}_{\mathrm{odp}}$: A K3 surface with a single node. If $t \in g^{-1}(1/256)$, then the fibre over $t$ is of type $\mathrm{I}_{\mathrm{odp}}$.
\item $\mathrm{I}_n^{\Delta}$  for $n \in \mathbb{N}$: this is a semistable singular fibre whose dual graph is a triangulation of the faces of a $3$-dimensional simplex with sides of length $n$. Such a fibre has $2n^2 +2$ irreducible components. If $t \in g^{-1}(\infty)$ and $g$ ramifies to order $n$ at $t$, then the fibre over $t$ is of type $\mathrm{I}^\Delta_n$.
\item $\mathrm{II}$: A union of $11$ smooth rational surfaces, one of which, $E$, has multiplicity $2$, and the others, $F_1,\dots, F_{10}$, have multiplicity $1$. Each $F_i$ intersects $E$ in a smooth rational curve, but $F_i \cap F_j$ is empty for $i \neq j$. If $t \in g^{-1}(0)$ and $g$ ramifies to order $2$ at $t$, then the fibre over $t$ is of type $\mathrm{II}$.
\item $\mathrm{IV}$: A normal crossings union of $31$ smooth rational surfaces. One has multiplicity $4$, and the rest have multiplicities $3$, $2$ or $1$. If $t \in g^{-1}(0)$ and $g$ is unramified at $t$, then the fibre over $t$ is of type $\mathrm{IV}$.
\end{enumerate}

\subsection{Degenerations in the mirror}

Now we look at the mirror picture. The Dolgachev-Nikulin mirror to an $M_2$-polarized K3 surface is a K3 surface of degree $4$ ($\langle 4 \rangle$-polarized); generically such K3 surfaces are just smooth quartics in $\mathbb{P}^3$. There are three Fano threefolds which contain a K3 surface of degree $4$ as their anticanonical hypersurface; these are $X_1$, the quartic threefold in $\mathbb{P}^4$, $X_2$, the double cover of $\mathbb{P}^3$ ramified along a smooth generic quartic surface, and $X_4$, which is just $\mathbb{P}^3$ itself. Note that the integer $i$ assigned to each Fano $X_i$ is just the index of that Fano threefold. 

Let $Z$ be a generic smooth anticanonical K3 surface in both $X_i$ and $X_j$, for some choice of integers $i,j$ in $\{1,2,4\}$. Then $N_{Z/X_i} \cong \omega_{X_i}^{-1}|_Z \cong \mathcal{O}_Z(i)$, where $\mathcal{O}_Z(1)$ is the restriction of a hyperplane section in $\mathbb{P}^3$ to $Z$. Thus if we take a normal crossings union of $X_i$ and $X_j$ meeting along $Z$ then, in accordance with the discussion in Section \ref{sec:smoothtyur}, we cannot construct a Calabi-Yau threefold by smoothing $X_i \cup_Z X_j$, since $N_{Z/X_i} \otimes N_{Z/X_j} = \mathcal{O}_Z(i+j) \neq \calO_Z$.

This is similar to the situation in Section \ref{sect:Tyur}, where we could not smooth $X_1 \cup_Z X_2$ without first blowing up the locus $V \cap X_1 \cap X_2$, and we will solve it in the same way. Let  $C_1,\dots, C_k$ be smooth curves in $S$ cut out by sections of $\mathcal{O}_Z(x_s)$, for positive integers $x_1,\dots, x_k$, so that $\sum_{s = 1}^k x_s = i+j$. Then let $f\colon \widetilde{X}_i \to X_i$ be the blow up of $X_i$ in $C_1,\dots, C_k$ sequentially and let $E_i$ be the exceptional divisor over $C_i$, for $i=1,\dots, k$. 

The canonical divisor of $\widetilde{X}_i$ is given by $f^*K_{X_i} - \sum_{s=1}^k E_s$, so $\omega_{\widetilde{X}_i}^{-1} \cong \mathcal{O}_Z(-j)$. Therefore, according to \cite[Theorem 4.2]{ldncvsdcyv}, we may smooth $\widetilde{X}_i \cup_Z X_j$ to a Calabi-Yau threefold. We denote this threefold by $V^{\mu}_{i,j}$, where, as before, $\mu$ denotes the partition $[x_1,\ldots,x_k]$ of $(i+j)$. 

We claim that $V^{\mu}_{i,j}$ and $W^{\mu}_{i,j}$ are mirror dual, in the classical sense. As a first piece of evidence for this, we compute the Hodge numbers of $V^{\mu}_{i,j}$.

\begin{proposition}\label{Vhodgenumbersprop}
Let $i,j \in \{1,2,4\}$ and let $\mu = [x_1,\ldots,x_k]$ be a partition of $i+j$. Then the Hodge numbers of the threefold $V_{i,j}^{\mu}$ are given by
\begin{align*}
 h^{1,1}(V_{i,j}^{\mu}) &= k, \\
h^{2,1}(V_{i,j}^{\mu}) &= 20 + \sum_{s =1}^k (2 x_s^2 + 1) + h^{2,1}(X_i) + h^{2,1}(X_j),
\end{align*}
where $h^{2,1}(X_s) = 30$ \textup{(}resp. $10$, $0$\textup{)} for $s = 1$  \textup{(}resp. $2$, $4$\textup{)}.
\end{proposition}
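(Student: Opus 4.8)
The plan is to apply Lee's formula (Theorem~\ref{thm:lee}) directly to the normal crossings union $\widetilde{X}_i \cup_Z X_j$, whose smoothing is $V_{i,j}^{\mu}$ by construction. Both components are smooth threefolds --- $\widetilde{X}_i$ being a sequential blow-up of the smooth Fano $X_i$ along smooth curves, and $X_j$ a smooth Fano --- so Theorem~\ref{thm:lee} applies. To avoid a clash with the length $k$ of the partition $\mu$, I will write $\kappa := \rank(\im(\rho_1) + \im(\rho_2))$ for the integer appearing in Theorem~\ref{thm:lee}, where $\rho_1, \rho_2$ are the restriction maps to $H^2(Z,\mathbb{Q})$ from $\widetilde{X}_i$ and $X_j$ respectively. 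Then Theorem~\ref{thm:lee} reads
\begin{align*}
h^{1,1}(V_{i,j}^{\mu}) &= h^2(\widetilde{X}_i) + h^2(X_j) - \kappa - 1, \\
h^{2,1}(V_{i,j}^{\mu}) &= 21 + h^{2,1}(\widetilde{X}_i) + h^{2,1}(X_j) - \kappa,
\end{align*}
so it remains to compute the five quantities $h^2(\widetilde{X}_i)$, $h^{2,1}(\widetilde{X}_i)$, $h^2(X_j)$, $h^{2,1}(X_j)$ and $\kappa$.

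The Fano threefolds $X_1, X_2, X_4$ each have Picard rank one, so $h^2(X_j) = 1$, while $h^{2,1}(X_j) \in \{30, 10, 0\}$ is the standard value recorded in the statement. For $\widetilde{X}_i$ I would use the blow-up formula $H^n(\mathrm{Bl}_C Y) \cong H^n(Y) \oplus H^{n-2}(C)$ for the blow-up of a smooth threefold along a smooth curve $C$, applied once for each of the $k$ centres $C_1, \dots, C_k$ (at each stage the centre is the smooth strict transform of the next $C_s$, which is abstractly isomorphic to $C_s$). This gives $h^2(\widetilde{X}_i) = h^2(X_i) + k = 1 + k$ and $h^{2,1}(\widetilde{X}_i) = h^{2,1}(X_i) + \sum_{s=1}^k g(C_s)$, since the degree-$(2,1)$ part of $H^{n-2}(C_s)$ contributes $h^{1,0}(C_s) = g(C_s)$. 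The genus is computed by adjunction on the degree-four K3 surface $Z$: as $C_s \in |\mathcal{O}_Z(x_s)|$ with $(\mathcal{O}_Z(1))^2 = 4$, we get $2g(C_s) - 2 = C_s^2 = 4x_s^2$, hence $g(C_s) = 2x_s^2 + 1$ and $\sum_s g(C_s) = \sum_s (2x_s^2 + 1)$.

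The crux is the computation $\kappa = 1$. I would take $Z$ generic, so that $\Pic(Z) = \mathbb{Z}\cdot \mathcal{O}_Z(1)$ has rank one and $\NS(Z)_{\mathbb{Q}} = \mathbb{Q}\cdot\mathcal{O}_Z(1)$. Both restriction maps land in $\NS(Z)_{\mathbb{Q}}$, since they send divisor classes to divisor classes. For $X_j$ this is immediate, as $H^2(X_j,\mathbb{Q})$ is spanned by the hyperplane class, which restricts to $\mathcal{O}_Z(1)$. For $\widetilde{X}_i$, the group $H^2(\widetilde{X}_i,\mathbb{Q})$ is spanned by the pullback of the hyperplane class together with the exceptional divisors $E_1, \dots, E_k$; the pullback restricts to $\mathcal{O}_Z(1)$, while each $E_s$ restricts to the class of the effective curve $E_s \cap \widetilde{Z}$, namely $[C_s] = x_s\,\mathcal{O}_Z(1)$, on the strict transform $\widetilde{Z} \cong Z$. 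Hence $\im(\rho_1) + \im(\rho_2) = \mathbb{Q}\cdot\mathcal{O}_Z(1)$ has rank one, i.e. $\kappa = 1$.

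Substituting, $h^{1,1}(V_{i,j}^{\mu}) = (1+k) + 1 - 1 - 1 = k$, and $h^{2,1}(V_{i,j}^{\mu}) = 21 + h^{2,1}(X_i) + \sum_s(2x_s^2+1) + h^{2,1}(X_j) - 1 = 20 + \sum_s (2x_s^2+1) + h^{2,1}(X_i) + h^{2,1}(X_j)$, as claimed. I expect the main obstacle to be the verification that $\kappa = 1$: one must confirm both that $Z$ may be chosen with Picard rank one and, more importantly, that the exceptional divisors contribute nothing new to the image in $H^2(Z)$ --- that is, that each $E_s|_{\widetilde{Z}}$ is a multiple of the polarization. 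This is precisely the geometric input that forces the large groups $h^{2,1}(\widetilde{X}_i)$ and $h^2(\widetilde{X}_i)$ to meet $X_j$ only through the single class $\mathcal{O}_Z(1)$, and it is what makes the final formula emerge cleanly.
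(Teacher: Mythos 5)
Your proposal is correct and follows essentially the same route as the paper's own proof: apply Lee's formula (Theorem~\ref{thm:lee}) to $\widetilde{X}_i \cup_Z X_j$, compute $h^2(\widetilde{X}_i)$ and $h^{2,1}(\widetilde{X}_i)$ via the blow-up formula with $g(C_s) = 2x_s^2+1$ from adjunction on the degree-four K3, and observe that the rank of the combined restriction image is $1$ since $\NS(Z)$ is generated by the polarization. Your explicit verification that each exceptional divisor restricts to $x_s\,\mathcal{O}_Z(1)$ on the strict transform $\widetilde{Z}\cong Z$ is a point the paper leaves implicit, but the argument is the same.
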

\begin{proof} By definition, $V_{i,j}^{\mu}$ is a smoothing of $\widetilde{X}_i \cup_Z X_j$. Define 
\[q := \rank (\mathrm{im} (H^2(\widetilde{X}_i,\mathbb{Z}) \oplus H^2(X_j,\mathbb{Z}) \rightarrow H^2(Z,\mathbb{Z}))).\]
Then Lee \cite[Corollary 8.2]{cycsncv} shows that the Hodge numbers of $V_{i,j}^{\mu}$ are given by
\begin{align*}
h^{1,1}(V_{i,j}^{\mu}) &= h^2(\widetilde{X}_i) + h^2(X_j) -q-1, \\
h^{2,1}(V_{i,j}^{\mu}) &= 21 + h^{2,1}(\widetilde{X}_i) + h^{2,1}(X_j) - q.
\end{align*}

Now, since the N\'eron-Severi group of $Z$ is generated by the restriction of a hyperplane section from $X_i$, we must have $q=1$. Moreover, since we blew up $X_i$ a total of $k$ times to obtain $\widetilde{X}_i$, we have $h^{2}(\widetilde{X_i}) = k+1$ and $h^2(X_j) = 1$. Thus $h^{1,1}(V_{i,j}^{\mu}) = k$.

To compute $h^{2,1}(V_{i,j}^{\mu})$, we begin by noting that a smooth curve $C_s$ defined by a section of $\mathcal{O}_Z(x_s)$ has self-intersection $4x_s^2$ in $Z$. So the genus formula for curves on a surface gives $g(C_s) = 2x_s^2 + 1$. Thus, by standard results on the cohomology of a blow-up (see, for instance, \cite[Theorem 7.31]{htcagi}), we find
\[h^{2,1}(\widetilde{X}_i) = h^{2,1}(X_i) + \sum_{s=1}^k (2x_s^2 +1),\]
giving the claimed result for $h^{2,1}(V_{i,j}^{\mu})$. Finally, the values of $h^{2,1}(X_s)$ are easy to compute explicitly.
\end{proof}

Putting this proposition together with the result of Theorem \ref{thm:Whodgenumbers}, we obtain:

\begin{corollary}
Let $i,j \in \{1,2,4\}$ be a pair of integers and let $\mu = [x_1,\ldots,x_k]$ be a partition of $i+j$. Then there is a mirror duality between the Hodge numbers of the Calabi-Yau threefolds $V_{i,j}^{\mu}$ and $W_{i,j}^{\mu}$ .
\end{corollary}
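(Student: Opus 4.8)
The plan is to verify directly that the two triples of nontrivial Hodge numbers satisfy the numerical relations expected of a mirror pair of Calabi-Yau threefolds, namely $h^{1,1}(V_{i,j}^{\mu}) = h^{2,1}(W_{i,j}^{\mu})$ and $h^{2,1}(V_{i,j}^{\mu}) = h^{1,1}(W_{i,j}^{\mu})$. All of the required data is already assembled: Proposition \ref{Vhodgenumbersprop} records the Hodge numbers of $V_{i,j}^{\mu}$, while Theorem \ref{thm:Whodgenumbers} records those of $W_{i,j}^{\mu}$, so the argument is a term-by-term comparison of these two lists.

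First I would dispatch the first equality. Proposition \ref{Vhodgenumbersprop} gives $h^{1,1}(V_{i,j}^{\mu}) = k$, and Theorem \ref{thm:Whodgenumbers} gives $h^{2,1}(W_{i,j}^{\mu}) = k$; in both cases $k$ is the number of parts of the partition $\mu = [x_1,\ldots,x_k]$, so these agree on the nose. Next I would compare the remaining pair. Proposition \ref{Vhodgenumbersprop} yields
\[ h^{2,1}(V_{i,j}^{\mu}) = 20 + \sum_{s=1}^k (2x_s^2 + 1) + h^{2,1}(X_i) + h^{2,1}(X_j), \]
whereas Theorem \ref{thm:Whodgenumbers} gives
\[ h^{1,1}(W_{i,j}^{\mu}) = 20 + \sum_{s=1}^k (2x_s^2 + 1) + c_i + c_j. \]
These two expressions coincide precisely when $h^{2,1}(X_i) = c_i$ for each $i \in \{1,2,4\}$.

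The crux of the argument -- and the only point that is not purely formal -- is thus the identification of these two sets of constants. But this is exactly what the cited statements supply: Proposition \ref{Vhodgenumbersprop} records $h^{2,1}(X_s) = 30, 10, 0$ for $s = 1, 2, 4$, and Theorem \ref{thm:Whodgenumbers} defines $c_1 = 30$, $c_2 = 10$, $c_4 = 0$. Hence $h^{2,1}(X_i) = c_i$ in every case, the two displayed quantities are equal, and the corollary follows. I do not anticipate any genuine obstacle: the entire content of the result lies in the observation that the constants $c_i$ entering $h^{1,1}(W_{i,j}^{\mu})$ -- which arise from the singular-fibre contributions of types $\mathrm{IV}$, $\mathrm{II}$ and $\mathrm{I}_0$ over $g^{-1}(0)$, carrying $31$, $11$ and $1$ components and hence $30$, $10$ and $0$ extra components respectively -- reproduce exactly the Hodge numbers $h^{2,1}(X_i)$ of the Fano threefolds to which $V_{i,j}^{\mu}$ degenerates. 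This is the numerical shadow of the conceptual correspondence of Section \ref{sect:setup}: each Fano component $X_i$ contributes $h^{2,1}(X_i)$ to the complex moduli of $V_{i,j}^{\mu}$ and, dually, its Landau-Ginzburg model contributes the same number as a count of fibre components on the mirror side.
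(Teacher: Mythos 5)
Your proof is correct and matches the paper's own argument, which derives the corollary immediately by comparing Proposition \ref{Vhodgenumbersprop} with Theorem \ref{thm:Whodgenumbers} and noting $h^{1,1}(V^{\mu}_{i,j}) = k = h^{2,1}(W^{\mu}_{i,j})$ together with the coincidence $h^{2,1}(X_s) = c_s$ for $s \in \{1,2,4\}$. Your closing remark tying the constants $c_i$ to singular-fibre component counts is a correct gloss consistent with the paper's philosophy in Remark \ref{rem:threefoldphilosophy}, but it is not needed for the proof itself.
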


We expect that $V_{i,j}^{\mu}$ is actually mirror to $W_{i,j}^{\mu}$, but of course this is not a proof. As further evidence, however, we can also compare filtrations as in Section \ref{sect:MScones}. For the threefolds $V_{i,j}^{\mu}$, we may compute the limit mixed Hodge structure associated to the degeneration to $\widetilde{X}_i \cup_Z X_j$, to obtain 
\[\begin{array}{rcccc}
& \Gr_F^3 & \Gr_F^2 & \Gr_F^1 & \Gr_F^0 \\
\Gr^M_4 &\mathbb{C} & \mathbb{C}^{19} & \mathbb{C} & 0 \\
\Gr^M_3 &0 & \mathbb{C}^v & \mathbb{C}^v &0  \\
\Gr^M_2 & 0 & \mathbb{C} & \mathbb{C}^{19} & \CC
\end{array}\]
for $v =  h^{2,1}(X_i) + h^{2,1}(X_j) + \sum_{s =1}^k (2 x_s^2 + 1)$. 

Now, for the threefolds $W_{i,j}^{\mu}$, let $J$ be the cup product operator with the negative of the class of a fibre of the $M_2$-polarized K3 surface fibration on $W_{i,j}^{\mu}$. Then, by the proof of \cite[Proposition 3.5]{cytfmqk3s} and Proposition \ref{Vhodgenumbersprop}, we see that the rank of the space of divisors in $H^{1,1}(W_{i,j}^{\mu})$ that are supported on fibres is 
\[\rank(\ker(J)) = 1 +\sum_{s =1}^k (2 x_s^2 + 1) + h^{2,1}(X_i) + h^{2,1}(X_j) = v+1.\] 
Moreover, the image of $J$ in $H^{1,1}(W_{i,j}^{\mu})$ is the span of the class of a fibre and the image of $J$ in $H^{3,3}(W_{i,j}^{\mu})$ spans $H^{3,3}(W_{i,j}^{\mu})$. Finally, the image of $J$ in $H^{2,2}(W_{i,j}^{\mu})$ is the space of classes dual to divisors in $H^{1,1}(W_{i,j}^{\mu})$ swept out by monodromy invariant divisors on a general fibre, which has rank $19$. Thus, we obtain
\[\begin{array}{rcccc}
& H^{0,0}(X_{i,j}^\mu) & H^{1,1}(W_{i,j}^{\mu}) & H^{2,2}(W_{i,j}^{\mu}) & H^{3,3}(W_{i,j}^{\mu}) \\
\mathrm{coimage}(J) & \CC & \mathbb{C}^{19} & \mathbb{C} & 0 \\
\ker(J)/ \mathrm{im}(J) &0 & \mathbb{C}^v & \mathbb{C}^v &0  \\
\mathrm{im}(J) & 0 & \mathbb{C} & \mathbb{C}^{19} & \CC
\end{array}\]
and the duality of bifiltered vector spaces discussed in Section \ref{sect:MScones} is satisfied in this case. 

Finally, we note that restriction of divisors from $X_i$ and $X_j$ induces a lattice polarization of $Z$ by the lattice $\langle 4 \rangle$, whilst the K3 surface fibration on $W_{i,j}^\mu$ is $M_2$-polarized. As expected from our previous calculations, these two lattices are Dolgachev-Nikulin mirror dual.

\subsection{Relationship to LG models} 

Now we will see how this fits with the results of Section \ref{sect:setup}. We begin by describing the LG models of $X_1,X_2$ and $X_4$. Again, we note that these threefolds can be constructed from the basic K3 fibred threefold $\mathcal{X}$.

\begin{theorem}\cite{cpmd,mft}
Define $g_{\ell}\colon\mathbb{A}^1 \rightarrow \mathbb{A}^1$ by $g_{\ell}(t) = t^{\ell}$. Then the LG model $(Y_{\ell},\mathsf{w}_{\ell})$ of $X_{\ell}$ is a partial compactification of $g^*_{\ell}{\mathcal{X}}$.
\end{theorem}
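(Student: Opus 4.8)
The plan is to prove the statement fibrewise and then identify the base map, handling $X_1$, $X_2$ and $X_4$ uniformly. First I would recall the explicit Givental--Przyjalkowski superpotentials for these three Fano threefolds from \cite{cpmd,mft}: for $X_4 = \mathbb{P}^3$ this is the standard toric Laurent polynomial $\mathsf{w}_4 = x + y + z + q/(xyz)$ on the torus $(\mathbb{C}^\times)^3$, with $q$ a nonzero constant, while for the quartic $X_1$ and the double cover $X_2$ one takes the corresponding (genuinely non-toric) Laurent-polynomial mirrors. In each case the superpotential $\mathsf{w}_\ell$ is projection to the $\mathbb{A}^1$-factor.

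The central fibrewise claim is that, for generic $t$, the fibre $\mathsf{w}_\ell^{-1}(t)$ is birational to a mirror quartic K3 surface, that is, to a fibre of $\mathcal{X}$. One route is to invoke the expected property of LG models recorded in Section~\ref{sect:glue}, namely that the smooth fibres of the LG model of $X_\ell$ are mirror to its generic anticanonical hypersurface; since each $X_\ell$ has a $\langle 4 \rangle$-polarized K3 surface as anticanonical hypersurface, whose Dolgachev--Nikulin mirror is precisely the $M_2$-polarized mirror quartic, the fibres are mirror quartics. I would prefer, however, to verify this directly: clearing denominators in $\mathsf{w}_\ell = t$ and compactifying in $\mathbb{P}^3$, one checks that the fibre is cut out by an equation that, after a monomial change of variables, coincides with the mirror quartic equation $\lambda w^4 + xyz(x+y+z-w) = 0$ appearing in \cite{cytfmqk3s}.

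It then remains to pin down the classifying map to the $\lambda$-line. Since the smooth fibres of $\mathsf{w}_\ell$ form a family of mirror quartics and $\mathcal{X}$ is birationally the universal such family, the family $\mathsf{w}_\ell\colon Y_\ell \to \mathbb{A}^1$ is, up to birational modification over the base, the pullback $g_\ell^*\mathcal{X}$ along a map $g_\ell\colon \mathbb{A}^1_t \to \mathbb{P}^1_\lambda$. The direct computation above makes this map explicit: the homogeneity used to match the two quartic equations shows that $\lambda$ is proportional to $t^{\ell}$, so in suitable coordinates $g_\ell(t) = t^\ell$. The essential point is that the exponent equals the Fano index $\ell$, and this can be read off invariantly from the ramification of $g_\ell$ over the maximally degenerate value $\lambda = 0$: the central fibre of the LG model realizes this degeneration with ramification of order exactly $\ell$. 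This is consistent with the classification of singular fibres in \cite{cytfmqk3s}, under which ramification of orders $1, 2, 4$ over $\lambda = 0$ produces fibres of types $\mathrm{IV}$, $\mathrm{II}$, $\mathrm{I}_0$ with $31$, $11$, $1$ components respectively, matching the counts $h^{2,1}(X_\ell) + 1$ for $\ell = 1, 2, 4$.

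Finally, the qualifier \emph{partial compactification} accounts for the gap between the naive pullback $g_\ell^*\mathcal{X}$ and the honest LG model $Y_\ell$. The Givental superpotential has affine fibres, whereas $Y_\ell$ is required to have projective (mirror quartic) fibres and a proper map to $\mathbb{C}$; one therefore compactifies the fibres and adjoins the central fibre, resolving to keep the total space smooth, but \emph{without} compactifying the base $\mathbb{A}^1_t$. That such a smooth partial compactification exists for threefold LG models is exactly the content of \cite{harderthesis}. I expect the main obstacle to lie in the two non-toric cases $X_1$ and $X_2$: establishing that their Laurent-polynomial superpotentials genuinely cut out mirror quartics, and extracting the exponent $\ell$ precisely, requires the non-toric constructions of \cite{cpmd,mft}, whereas for $\mathbb{P}^3$ everything reduces to standard toric mirror symmetry.
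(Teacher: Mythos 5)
Your proposal is correct and follows essentially the same route as the actual argument: the paper states this theorem without proof, citing \cite{cpmd,mft}, and those references proceed exactly as you do --- writing down the Givental/Przyjalkowski Laurent superpotentials for $X_1$, $X_2$, $X_4$, clearing denominators and compactifying the fibres in $\mathbb{P}^3$ to members of the pencil $\lambda w^4 + xyz(x+y+z-w) = 0$ with $\lambda$ a scalar multiple of $t^{\pm\ell}$ (your phrase ``in suitable coordinates'' correctly absorbs the inversion of the base coordinate that the direct computation produces), and then resolving fibrewise as in \cite{harderthesis} without compactifying the base, which is precisely the sense of ``partial compactification'' of $g_\ell^*\mathcal{X}$. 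Your cross-check matching ramification orders $1,2,4$ over the degenerate point to fibres of types $\mathrm{IV}$, $\mathrm{II}$, $\mathrm{I}_0$ with $h^{2,1}(X_\ell)+1$ components is consistent with the classification recalled from \cite{cytfmqk3s} and with Theorem \ref{thm:andrewsthm}; the only slip is terminological --- Przyjalkowski's superpotentials for $X_1$ and $X_2$ are themselves honest Laurent polynomials (toric weak LG models, e.g.\ $(x+y+z+1)^4/xyz$ for the quartic), not ``genuinely non-toric'' mirrors --- which does not affect the argument.
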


In fact, it follows from our classification of singular fibres in Section \ref{sec:mirrorquartics} that 
\begin{enumerate}
\item $(Y_1,\mathsf{w}_1)$ has two singular fibres in $\mathbb{A}^1$, located at $0$ and $1/256$, of types $\mathrm{IV}$ and $\mathrm{I}_\mathrm{odp}$ respectively,
\item $(Y_2,\mathsf{w}_2)$ has three singular fibres in $\mathbb{A}^1$, located at $0$ and $\pm\sqrt{1/256}$, of types $\mathrm{II}$ and $\mathrm{I}_\mathrm{odp}$ respectively,
\item $(Y_4,\mathsf{w}_4)$ has four singular fibres in $\mathbb{A}^1$, located at $\sqrt[4]{1/256}$, all of which have type $\mathrm{I}_\mathrm{odp}$. Its fibre over $0$ has type $\mathrm{I}_0$.
\end{enumerate}

The following ansatz has been suggested by Katzarkov \cite{hmsac}:

\begin{ansatz}
Blowing up a Fano threefold $X$ in a smooth curve of genus $g \geq 2$ has the effect of deforming the LG model $(Y,\mathsf{w})$ of $X$, so that a semistable fibre with $g+1$ irreducible components moves from infinity to a point in $\mathbb{A}^1$.
\end{ansatz}

By this logic, we can induce that if $\widetilde{X}_{\ell}$ is as above, then its LG model $(\widetilde{Y}_{\ell},\widetilde{\mathsf{w}}_{\ell})$ has singular fibres of the types listed above for $(Y_{\ell},\mathsf{w}_{\ell})$, along with fibres of type $\mathrm{I}_{x_{s}}^\Delta$, for ${s} =1,\dots, k$, since a smooth curve $C_{s}$ determined by a section of $\mathcal{O}_Z(x_{s})$ has genus $2x_{s}^2 + 1$. 

By the results of \cite[Section 4]{cytfmqk3s}, generically we may choose the map $g$ determining $W^{\mu}_{i,j}$ so that its ramification points not lying over $0$ and $\infty$ are all of order $2$, and so that all nonzero branch points $\lambda$ of $g$ have $|\lambda| \geq R$ for some real number $R > 1/256$. Then there are two components to the preimage of the disc $U_R = \{ z \in \mathbb{C} : |z| < R\}$, corresponding to the two preimages of $0$. Let $U^{\ell}_R$ be the component of $g^{-1}(U_R)$ containing the ramification point of $g$ of order $\ell$ over $0$ (for $\ell \in \{i,j\}$). Then the restriction of the fibration $\pi$ on $W^{\mu}_{i,j}$ to $U^{\ell}_R$ is a fibration over a disc with a fibre of type $\mathrm{IV}$, $\mathrm{II}$ or $\mathrm{I}_0$ over $g^{-1}(0)$, depending on whether ${\ell} = 1$, $2$ or $4$ respectively, and ${\ell}$ fibres of type $\mathrm{I}_{\mathrm{odp}}$. In fact, this fibration over $U^{\ell}_R$ is deformation equivalent to the LG model of $X_{\ell}$, in the sense that as $R \to \infty$, the map $g$ degenerates to a stable map from a pair of rational curves $C_i$ and $C_j$ meeting at a point, such that the restriction of $g$ to $C_{\ell}$ is $g_{\ell}$. We also see that monodromy around the boundary of $U_R^{\ell}$ is equal to the monodromy around $\infty$ of the the LG model $(Y_{\ell},\mathsf{w}_{\ell})$.

Away from $0$ and $\infty$, we have that the restriction of $\pi$ to $g^{-1}(\mathbb{P}^1 \setminus U_R)$ has singular fibres of type $\mathrm{I}_{x_i}^\Delta$, for $i=1,\dots,k$, and these account for all of the singular fibres of $\pi$ restricted to $g^{-1}(\mathbb{P}^1 \setminus U_R)$. We thus obtain the following theorem; which is highly reminiscent of our philosophy in the K3 surface case (see Section \ref{sec:discussion}).

\begin{theorem}
The threefold $W^{\mu}_{i,j}$ is topologically equivalent to the gluing of the LG model of $X_j$ to the LG model of $\widetilde{X}_i$, as described in Section \ref{sect:glue}.
\end{theorem}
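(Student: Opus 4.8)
The plan is to reconstruct the base $\mathbb{P}^1$ of the fibration $\pi\colon W^{\mu}_{i,j}\to \mathbb{P}^1$ as the two-disc configuration underlying the gluing construction of Section \ref{sect:glue}, and then identify the restriction of $\pi$ to each disc with the appropriate Landau-Ginzburg model. The first observation I would make is that, since $U_R^j$ is an open disc in $\mathbb{P}^1$, its complement $\mathbb{P}^1\setminus U_R^j$ is a closed disc; as $\mathbb{P}^1 = U_R^i \sqcup U_R^j \sqcup g^{-1}(\mathbb{P}^1\setminus U_R)$, we have $\mathbb{P}^1\setminus U_R^j = U_R^i \cup g^{-1}(\mathbb{P}^1\setminus U_R)$. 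Thus the base decomposes as a union of two discs $U_R^j$ and $\mathbb{P}^1\setminus U_R^j$ meeting along the circle $\partial U_R^j$, whose annular collar neighbourhood plays the role of the region over which $B_1$ and $B_2$ are glued in Section \ref{sect:glue}.

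For the disc $U_R^j$, I would invoke the discussion preceding the theorem directly: the map $g$ is modelled on $t\mapsto t^j$ on $U_R^j$, so $\pi|_{U_R^j}$ is a partial compactification of $g_j^{*}\mathcal{X}$ carrying a single fibre of type $\mathrm{IV}$, $\mathrm{II}$ or $\mathrm{I}_0$ over $g^{-1}(0)$ together with $j$ fibres of type $\mathrm{I}_{\mathrm{odp}}$. This is exactly the fibre configuration of the LG model $(Y_j,\mathsf{w}_j)$ of $X_j$, and hence $\pi|_{U_R^j}$ is deformation equivalent to $(Y_j,\mathsf{w}_j)$.

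For the disc $\mathbb{P}^1\setminus U_R^j$, the restriction $\pi|_{U_R^i}$ is again deformation equivalent to the LG model of $X_i$, while over $g^{-1}(\mathbb{P}^1\setminus U_R)$ the only singular fibres are the $k$ fibres of type $\mathrm{I}^{\Delta}_{x_s}$ over $g^{-1}(\infty)$ — the order-$2$ ramification points of $g$ lie over smooth values $\lambda$ with $|\lambda|\ge R > 1/256$ and so contribute only smooth fibres. By Katzarkov's ansatz these $\mathrm{I}^{\Delta}_{x_s}$ fibres are precisely the fibres introduced when $X_i$ is blown up along the curves $C_1,\dots,C_k$ of genera $2x_s^2+1$, so the fibre configuration over $\mathbb{P}^1\setminus U_R^j$ agrees with that of $(\widetilde{Y}_i,\widetilde{\mathsf{w}}_i)$, and the restriction of $\pi$ to this disc is deformation equivalent to the LG model of $\widetilde{X}_i$. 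To finish, I would check that these identifications assemble into the gluing of Section \ref{sect:glue}: the monodromy of $\pi$ about $\partial U_R^j$ computed from the $U_R^j$ side is the monodromy $\phi_j$ at infinity of $(Y_j,\mathsf{w}_j)$, and computed from the $\mathbb{P}^1\setminus U_R^j$ side it is the monodromy at infinity of $(\widetilde{Y}_i,\widetilde{\mathsf{w}}_i)$; since these come from the same circle traversed with opposite orientations they are automatically inverse, so the hypothesis $\phi_1=\phi_2^{-1}$ needed to form the identification $\tau$ of Section \ref{sect:glue} holds. Matching a trivialisation of $\pi$ over the collar with those used to build $B_1$ and $B_2$ then exhibits $W^{\mu}_{i,j}$ as exactly the gluing of $(Y_j,\mathsf{w}_j)$ to $(\widetilde{Y}_i,\widetilde{\mathsf{w}}_i)$.

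The step I expect to be the main obstacle is the identification over the second disc: showing that the global fibration over $g^{-1}(\mathbb{P}^1\setminus U_R)$ together with its $\mathrm{I}^{\Delta}_{x_s}$ fibres is genuinely the LG model of the blow-up $\widetilde{X}_i$, rather than merely sharing its fibre types and monodromy. This identification rests on Katzarkov's ansatz, which is not a theorem; at the level of fibre configurations and boundary monodromy the argument is complete, but a fully rigorous proof would require an independent construction of $(\widetilde{Y}_i,\widetilde{\mathsf{w}}_i)$ together with a verification that the blow-up inserts exactly the fibres $\mathrm{I}^{\Delta}_{x_s}$ in the prescribed position.
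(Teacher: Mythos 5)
Your proposal is correct and takes essentially the same route as the paper's own argument: the same decomposition of the base into the disc $U_R^j$ and its complementary disc containing $U_R^i$ and the $\mathrm{I}^{\Delta}_{x_s}$ fibres, the same identification of $\pi$ over each piece with $(Y_j,\mathsf{w}_j)$ and $(\widetilde{Y}_i,\widetilde{\mathsf{w}}_i)$ via the degeneration of $g$ to a stable map and Katzarkov's ansatz, and the same matching of boundary monodromies over the collar. You also correctly flag the one non-rigorous ingredient, the reliance on the unproven ansatz to identify the LG model of $\widetilde{X}_i$, which is precisely the caveat implicit in the paper's own treatment.
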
 

\section{Non-commutative fibrations}\label{sec:noncomm}

Now we turn our attention to ways in which this construction can fail. Suppose that $V$ and $W$ are mirror dual Calabi-Yau threefolds. The computation in Section \ref{sect:MScones} suggests that, if a Tyurin degeneration of $V$ occurs along a locus in the moduli space of $V$ that contains a maximally unipotent monodromy point, then there should exist a K3 fibration on $W$ whose properties are consistent with the results in Section \ref{sec:threefold}. 

However, it is also possible for Tyurin degenerations to occur along loci in the moduli space of $V$ that are disjoint from points of maximally unipotent monodromy. For such Tyurin degenerations the argument in Section \ref{sect:MScones} will break down and, in particular, in such cases we will have no guarantee of the existence of K3 fibrations on $W$. For an example where this occurs, consider:

\begin{example}\label{ex:cubic}
Let $W$ be the complete intersection of two cubics in $\mathbb{P}^5$ and let $V$ be its Batyrev-Borisov mirror. Since $h^{2,1}(V) = h^{1,1}(W) =1$, the dimension of the moduli space of $V$ is $1$ and, indeed, it can be shown that the moduli space of Calabi-Yau varieties deformation equivalent to $V$ is $\mathbb{P}^1 \setminus \{0,1,\infty\}$. In \cite{ghmsc} Katzarkov and Przyjalkowski show that, after semi-stabilization, the degeneration of $V$ over $\infty$ is a Tyurin degeneration. The degenerate fibre is a union of two quasi-Fano threefolds $X_1$, $X_2$ meeting in a K3 surface $Z$, which has Picard rank 20 and transcendental lattice isomorphic to
\[\left( \begin{matrix}  2& 1 \\ 1 & 2\end{matrix} \right).\]

There are several problems with this example. First of all, we find that this Tyurin degeneration is not connected to a point of maximally unipotent monodromy in the moduli space of $V$, so the discussion above does not hold. An even more grave issue is the fact that the K3 surface $Z$ has no Dolgachev-Nikulin mirror. We would therefore not expect a K3 surface fibration corresponding to this Tyurin degeneration to appear on the mirror Calabi-Yau threefold $W$ and, indeed, the non-existence of such a fibration is easily verified, since $h^{1,1}(W) = 1$.
\end{example}

However, if we are willing to replace the mirror Calabi-Yau threefold $W$ with its bounded derived category of coherent sheaves $\mathbf{D}^b(W)$, we \emph{do} know how to apply mirror symmetry to monodromy in non-maximally unipotent families. Indeed, if $\mathcal{V}$ is a family of Calabi-Yau threefolds over a punctured disc $U$, such that there is a symplectic form $\omega$ on $\mathcal{V}$ which restricts to a symplectic form on each fibre, then the action of monodromy around $0$ induces a symplectomorphism on a smooth fibre $V$. According to \cite{cmhms}, this symplectomorphism induces an autoequivalence on the Fukaya category $\mathcal{F}\mathrm{uk}(V,\omega)$, which passes through mirror symmetry to produce an element of $\mathrm{AutEq}(\mathbf{D}^b(W))$.

Our goal is to understand what sort of autoequivalence this is. If $\pi\colon W\rightarrow \mathbb{P}^1$ is a fibration of $W$ by K3 surfaces, then there is a right derived pullback functor $\mathbf{R}\pi^*\colon \mathbf{D}^b(\mathbb{P}^1) \rightarrow \mathbf{D}^b(W)$. We conjecture that the autoequivalence of $\mathbf{D}^b(W)$ obtained above is an autoequivalence related to the spherical functor $\mathbf{R}\pi^*$. Moreover, if $p$ is a point in $\mathbb{P}^1$, then the inclusion gives a pullback $\mathbf{R} i^*\colon \mathbf{D}^b(\mathbb{P}^1) \rightarrow \mathbf{D}^b(p)$. We can take the fibre product of these categories, denoted $\mathbf{D}^b(W) \otimes_{\mathbf{D}^b(\mathbb{P}^1)} \mathbf{D}^b(p)$, to give a category associated to a general fibre of $W$. We want this category to be the bounded derived category of coherent sheaves on a K3 surface. 

To state this idea more precisely, we need a definition. A triangulated category $\mathbf{T}$ with Serre functor $\mathsf{S}$ is called \emph{$d$-Calabi-Yau} if $\mathsf{S}$ is equivalent to $[d]$, where $[d]$ indicates the natural ``shift by $d$'' functor. This is a useful notion because the bounded derived category $\mathbf{D}^b(X)$ of coherent sheaves on a smooth projective variety $X$ is $d$-Calabi-Yau if and only if $X$ is a Calabi-Yau variety of dimension $d$.

\begin{conjecture}\label{conj:der}
Let $V$ be a Calabi-Yau variety of dimension $d$ that admits a Tyurin degeneration, and let $W$ be a homological mirror of $V$. Then there is a functor $F\colon \mathbf{D}^b(\mathbb{P}^1) \rightarrow \mathbf{D}^b(W)$ so that for a generic point $p \in \mathbb{P}^1$, the category $\mathbf{D}^b(V) \otimes_{\mathbf{D}^b(\mathbb{P}^1)} \mathbf{D}^b(p)$ is a $(d-1)$-Calabi-Yau category.
\end{conjecture}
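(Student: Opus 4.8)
The plan is to construct $F$ as a spherical functor whose associated twist realizes the autoequivalence of $\mathbf{D}^b(W)$ that homological mirror symmetry attaches to the Tyurin degeneration of $V$, and then to identify the fibre product (which, matching the discussion above and the target of $F$, we take to be $\mathbf{D}^b(W)\otimes_{\mathbf{D}^b(\mathbb{P}^1)}\mathbf{D}^b(p)$) with a category that is visibly $(d-1)$-Calabi-Yau. Since the relative tensor product of triangulated categories is only well behaved after enhancement, I would fix dg or stable $\infty$-categorical enhancements at the outset and read $\otimes_{\mathbf{D}^b(\mathbb{P}^1)}$ as a tensor product of module categories over the symmetric monoidal category $\mathbf{D}^b(\mathbb{P}^1)$. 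The starting datum is the autoequivalence $\Phi\in\mathrm{AutEq}(\mathbf{D}^b(W))$ produced above: the monodromy symplectomorphism of the family $\mathcal{V}\to U$ acts on $\mathcal{F}\mathrm{uk}(V,\omega)$, and under the assumed mirror equivalence this transports to $\Phi$.

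The second step is to produce $F\colon \mathbf{D}^b(\mathbb{P}^1)\to\mathbf{D}^b(W)$ together with a compatible $\mathbf{D}^b(\mathbb{P}^1)$-module structure on $\mathbf{D}^b(W)$, in such a way that $\Phi$ is the twist $T_F$. When the Tyurin degeneration sits at a maximally unipotent boundary point, Section \ref{sect:MScones} guarantees a genuine Calabi-Yau $(d-1)$-fold fibration $\pi\colon W\to\mathbb{P}^1$, and I would take $F=\mathbf{R}\pi^*$ with module structure $\pi^*(-)\otimes(-)$; the identification of $\Phi$ with the resulting twist is exactly the mirror interpretation of fibrewise monodromy discussed in Section \ref{sect:setup}. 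In this geometric case the conjecture collapses to derived base change, giving $\mathbf{D}^b(W)\otimes_{\mathbf{D}^b(\mathbb{P}^1)}\mathbf{D}^b(p)\simeq\mathbf{D}^b(\pi^{-1}(p))$, and because a general fibre $\pi^{-1}(p)$ is a smooth Calabi-Yau $(d-1)$-fold its Serre functor equals $[d-1]$, so the fibre product is $(d-1)$-Calabi-Yau by definition.

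The essential content, and the main obstacle, is the non-geometric situation of Example \ref{ex:cubic}, where $h^{1,1}(W)=1$ rules out any honest fibration even though $V$ still degenerates. Here $F$ must be built abstractly: one has to show that $\Phi$ is the twist of some spherical functor out of $\mathbf{D}^b(\mathbb{P}^1)$ and that this functor equips $\mathbf{D}^b(W)$ with a $\mathbf{D}^b(\mathbb{P}^1)$-module structure. The natural approach is local: the central fibre $X_1\cup_Z X_2$ and the vanishing cycle of the smoothing determine a Dehn-twist monodromy, which on the mirror should be a spherical twist around the subcategory playing the role of the fibre. Recognizing the base as $\mathbf{D}^b(\mathbb{P}^1)$ amounts to fitting $\Phi$ into the twist/cotwist formalism with source $\mathbf{D}^b(\mathbb{P}^1)$ --- morally, realizing the monodromy as generated by a $\mathbb{P}^1$-family of objects --- and doing this without an underlying morphism is the crux of the problem.

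Granting the spherical structure, the final step is a relative Serre duality computation: the Serre functor of $\mathbf{D}^b(W)\otimes_{\mathbf{D}^b(\mathbb{P}^1)}\mathbf{D}^b(p)$ should be expressible through the Serre functor $[d]$ of $\mathbf{D}^b(W)$, the relative dualizing data of $F$ over $\mathbf{D}^b(\mathbb{P}^1)$, and the Serre functor of $\mathbf{D}^b(p)$. Restricting to the fibre over a point lowers the dimension by one, so the expected answer is the shift $[d-1]$; the technical heart is to verify that the relative canonical structure of the spherical functor contributes precisely this shift and no further autoequivalence. I expect the abstract construction of $F$ in the non-geometric case to be the principal difficulty, since it requires giving meaning to a \emph{non-commutative fibration over} $\mathbb{P}^1$ in the total absence of a morphism, which is exactly where the conjecture reaches beyond the current theory of spherical functors.
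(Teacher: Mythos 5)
The statement you were asked to prove is Conjecture~\ref{conj:der}, and the paper offers no proof of it: its role in Section~\ref{sec:noncomm} is precisely to formulate what should replace a geometric fibration when the Tyurin degeneration of $V$ occurs along a locus disjoint from the maximally unipotent points, and the only evidence supplied is Example~\ref{ex:cubic2}, where Calabrese and Thomas exhibit the categorical fibration by hand for the $(3,3)$ complete intersection in $\mathbb{P}^5$. So there is no paper proof to compare yours against, and your proposal is, by your own admission, not a proof either. Your treatment of the geometric case is consistent with the paper's discussion (you correctly read the fibre product as $\mathbf{D}^b(W)\otimes_{\mathbf{D}^b(\mathbb{P}^1)}\mathbf{D}^b(p)$, matching the surrounding text rather than the typo in the conjecture's statement, take $F=\mathbf{R}\pi^*$, and use enhanced base change to identify the fibre product with $\mathbf{D}^b(\pi^{-1}(p))$, which is fine for generic $p$ since the fibre is then smooth), but in that case the conjecture is essentially vacuous: the entire content lies in the non-geometric situation of Example~\ref{ex:cubic}, which you explicitly leave open.

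Two concrete gaps remain. First, the construction of $F$ when no morphism $W\to\mathbb{P}^1$ exists is not a technical refinement of your argument but the whole substance of the conjecture; ``realizing the monodromy as generated by a $\mathbb{P}^1$-family of objects'' restates the problem rather than advancing it, and neither the paper nor the existing spherical-functor formalism supplies such a construction in general. Second, even granting a spherical $F$ with a module structure, your final step --- that a relative Serre duality computation forces the generic fibre category to have Serre functor exactly $[d-1]$ --- is not formal. The paper's own Example~\ref{ex:cubic2} illustrates the danger: there the fibre categories are not derived categories of K3 surfaces but semi-orthogonal summands of derived categories of cubic fourfolds, and their $2$-Calabi-Yau property is a nontrivial theorem, not a consequence of twist/cotwist bookkeeping over $\mathbf{D}^b(\mathbb{P}^1)$; any proof of the conjecture must accommodate precisely such non-geometric fibres. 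Your outline is a sensible research program that matches the heuristics the paper itself offers (monodromy symplectomorphisms acting on $\mathcal{F}\mathrm{uk}(V,\omega)$ transported to $\mathrm{AutEq}(\mathbf{D}^b(W))$, the functor $\mathbf{R}\pi^*$ in the fibred case), but it neither closes these gaps nor could be expected to: the statement is open, and is presented in the paper as such.
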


Moreover, we expect that the fibres of such a categorical fibration are equivalent, in an appropriate sense, to the Fukaya category of the Calabi-Yau $Z = X_1 \cap X_2$, where $X_1 \cup_Z X_2$ is the Tyurin degeneration of $V$.

\begin{example} \label{ex:cubic2} Calabrese and Thomas \cite{decy3fc4f} showed how this should work in the setting of Example \ref{ex:cubic}. Let $f_1$ and $f_2$ be cubics so that $W =\{ f_1 = f_2 = 0\} \subset \PP^5$. Blowing up $\mathbb{P}^5$ along $W$, one obtains a five-fold $\widetilde{\mathbb{P}}^5$ which is fibred over $\mathbb{P}^1$ by cubics $s f_1 + t f_2 = 0$, for $[s : t] \in \mathbb{P}^1$. This variety has derived category
\[\mathbf{D}^b(\widetilde{\mathbb{P}}^5) = \langle \mathbf{D}^b(W), \widetilde{\pi}^*\mathbf{D}^b(\mathbb{P}^1) (i,0) : i = 3, 4, 5 \rangle \]
where $\widetilde{\pi}\colon \widetilde{\mathbb{P}}^5 \rightarrow \mathbb{P}^1$ is the natural map.

Using this, Calabrese and Thomas show that there is a fibration structure on $\mathbf{D}^b(W)$ satisfying the conditions of Conjecture \ref{conj:der} above. In this case the derived categories $\mathbf{D}^b(W) \otimes_{\mathbf{D}^b(\mathbb{P}^1)} \mathbf{D}^b(p)$ are 2-Calabi-Yau categories, but they are not the bounded derived category of coherent sheaves on any K3 or abelian surface; instead they are semi-orthogonal summands of the derived categories of the cubic fourfolds cut out by the equations $s f_1 + t f_2 = 0$.

Moreover, according to Hassett \cite{srcf}, the transcendental lattice of a generic cubic fourfold is isomorphic to 
\[\mathrm{E}_8^2 \oplus \mathrm{U}^2 \oplus  \left( \begin{matrix}  -2& -1 \\ -1 & -2\end{matrix} \right),\]
which is precisely the Dolgachev-Nikulin dual lattice associated to the transcendental lattice of the K3 surface $Z$ in Example \ref{ex:cubic}. We thus conjecture that the non-commutative fibration found by Calabrese and Thomas \cite{decy3fc4f} is mirror dual to the Tyurin degeneration of Katzarkov and Przyjalkowski \cite{ghmsc}.
\end{example}

It may be possible to generate other examples like this one as follows. Suppose that $\mathcal{V}$ is a family of Calabi-Yau threefolds over a small disc $U$, so that the general fibre $V$ has $h^{2,1}(V) = 1$  and the fibre over $0$ is a Tyurin degeneration. Then the matrix describing the monodromy action on $H^3(V,\QQ)$ associated to a loop around $0$ has two Jordan blocks of rank $1$. 

The families of Calabi-Yau variations of Hodge structure lying over $\mathbb{P}^1 \setminus \{0,1,\infty\}$ with appropriate monodromy properties have been classified by Doran and Morgan \cite{doranmorgan}. One can check that there are precisely three families in this classification for which the monodromy matrix associated to $\infty$ has two Jordan blocks of rank $1$. These families are mirrors to
\begin{enumerate}
\item The $(3,3)$ complete intersection in $\mathbb{P}^5$,
\item The $(4,4)$ complete intersection in $\mathbb{WP}(1,1,1,1,2,2)$,
\item The $(6,6)$ complete intersection in $\mathbb{WP}(1,1,2,2,3,3)$.
\end{enumerate}
This raises the following natural question.

\begin{question}
Are there categorical fibrations on $\mathbf{D}^b(W)$ satisfying Conjecture \ref{conj:der}, for $W$ a $(4,4)$ complete intersection in $\mathbb{WP}(1,1,1,1,2,2)$ or a $(6,6)$ complete intersection in $\mathbb{WP}(1,1,2,2,3,3)$?
\end{question}

Neither of these Calabi-Yau threefolds $W$ can admit commutative K3 fibrations, since $h^{1,1}(W) = 1$, so if mirror dual fibrations exist then they are necessarily non-commutative, as in Example \ref{ex:cubic2} above.

\bibliography{Books}
\bibliographystyle{amsalpha}
\end{document}